\newtheorem{thm}{Theorem}[section]
\newtheorem{defi}{Definition}[section]
\newtheorem{prop}{Proposition}[section]
\newtheorem{lem}{Lemma}[section]
\newtheorem{cor}{Corollary}[section]
\newtheorem{rem}{Remark}[section]
\newcommand{\beg}{\begin}
\newcommand{\bea}{\beg{eqnarray}}
\newcommand{\eea}{\end{eqnarray}}
\def\lb{\label}
\newcommand{\rr}{\mathbb{R}}
\newcommand{\zz}{\mathbb{Z}}
\newcommand{\Sp}{\mathrm{Sp}}
\newcommand{\ga}{\gamma}
\newcommand{\gm}{\gamma}
\newcommand{\lmd}{\lambda}
\newcommand{\zt}{\zeta}
\newcommand{\sg}{\sigma}
\newcommand{\om}{\omega}
\newcommand{\bt}{\beta}
\newcommand{\vep}{\varepsilon}
\newcommand{\vr}{\varrho}
\newcommand{\ey}{\frac{1}{2}}
\newcommand{\xd}{\dot{x}}
\newcommand{\qd}{\dot{q}}
\newcommand{\bh}{\hat{B}}
\newcommand{\bc}{\mathcal{B}}
\newcommand{\X}{\mathcal{X}}
\newcommand{\Xh}{\hat{\mathcal{X}}}
\newcommand{\E}{\mathcal{E}}
\newcommand{\A}{\mathcal{A}}
\newcommand{\I}{\mathcal{I}}
\newcommand{\U}{\hat{U}}
\newcommand{\lan}{\langle}
\newcommand{\ran}{\rangle}
\newcommand{\Mh}{\hat{M}}
\newcommand{\vd}{V_{\mathfrak{D}}}
\newcommand{\vn}{V_{\mathfrak{N}}}
\begin{document}

\title[An Index Theory for $n$-body problem]{An Index Theory for Collision, Parabolic and Hyperbolic  Solutions of the Newtonian $n$-body Problem}

\author{Xijun Hu}
\address{School of Mathematics, Shandong University, Jinan, Shandong, People’s Republic of  China}
\email{xjhu@sdu.edu.cn}

\author{Yuwei Ou}
\address{School of Mathematics (Zhuhai), Sun Yat-Sen University, Zhuhai, Guangdong, People’s Republic of China}
\email{ouyw3@mail.sysu.edu.cn}

\author{Guowei Yu}
\address{Chern Institute of Mathematics and LPMC, Nankai University, Tianjin, People's Republic of China}
\email{yugw@nankai.edu.cn}

\thanks{This work was partially supported by National Key R\&D Program of China(2020YFA0713300).\\ The first author thanks the support of NSFC(No.12071255, 11790271). The second author thanks support of NSFC (No.11801583).  The last author thanks the support of MSRI at Berkeley (under the NSF Grant No. DMS-1440140) and Nankai Zhide Foundation.}


\date{}

\begin{abstract}
In the Newtonian $n$-body problem for solutions with arbitrary energy, which start and end either at a total collision or a parabolic/hyperbolic infinity, we prove some basic results about their Morse and Maslov indices. Moreover for homothetic solutions with arbitrary energy, we give a simple and precise formula that relates the Morse indices of these homothetic solutions to the spectra of the normalized potential at the corresponding central configurations. Potentially these results could be useful in the application of non-action minimization methods in the Newtonian $n$-body problem.
\end{abstract}

\maketitle

\bigskip

\noindent{\bf AMS Subject Classification:}  70F16, 70F10, 37J45, 53D12

\bigskip

\noindent{\bf Key Words:} celestial mechanics, index theory, collision solution, parabolic solution, hyperbolic solution, homothetic solution.

\section{Introduction} \label{sec: intro}

The Newtonian $n$-body problem studies the motion of $n$ point masses, $m_i>0$, according to Newton's law of universal gravitation. Let $M=\text{diag}{(m_1I_d,\dots,m_n I_d)}$ be the mass matrix, where $I_d$ is the $d \times d$ identity matrix with $d \ge 1$. Then $q=(q_i)_{i=1}^n$ ($q_i \in \rr^d$ represents the position of $m_i$) satisfies
\begin{equation}\label{eq:NewtonINTRO}
M\,\ddot q = \nabla U(q),
\end{equation}
where $U(q)= \sum_{1\leq i< j \leq n}\frac{m_i m_j}{|q_i-q_j|}$ is the potential function (the negative potential energy) and $\nabla$ is the gradient with respect to the Euclidean metric.

The solutions of \eqref{eq:NewtonINTRO} are invariant under linear translations, so there is no loss of generality to restrict ourselves to the $n^*:=d(n-1)$ dimension subspace
$$
\X:=\{ q \in \rr^{dn}: \; \sum_{i=1}^n m_i q_i =0 \},
$$
where the center of mass is fixed at the origin.

Let $T\X$ be the tangent bundle of $\X$. The Lagrangian $L: T\X \to [0, +\infty) \cup \{+\infty\}$
\begin{equation}\label{eq:lagrangianaINTRO}
 L(q,v)=K(v)+ U(q), \; \text{ where } K(v):=\frac{1}{2}|v|_M^2 := \ey \lan Mv, v \ran, \;
\end{equation}
has singularities at the collision configurations
$$ \Delta= \cup_{1 \le i < j \le n} \Delta_{ij}, \; \text{ where } \Delta_{ij}= \{q \in \X: \; q_i = q_j \}. $$
It is well-known (see \cite{AZ94}) the Lagrangian action functional
\begin{equation}
\mathcal{A}(q; t_1, t_2):=\int_{t_1}^{t_2} L(q(t), \dot{q}(t)) dt,
\end{equation}
is $C^2$ on $W^{1,2}([t_1, t_2], \Xh)$, $\Xh:= \X \setminus \Delta$ represents collision-free configurations, and any collision-free critical point of $\mathcal{A}$ is a classical solution of \eqref{eq:NewtonINTRO}.

Because the Newtonian gravity is a weak force, the action value of a path with collisions could still be finite. This means the critical points obtained using variational methods may contain a subset of collision moments with zero measure and only satisfies \eqref{eq:NewtonINTRO} in the complement of it. Such solutions were named \emph{generalized solutions} by Bahri and Rabinowitz in \cite{BR89} and \cite{BR91}.

In the last twenty years, a lot of progress has been made regarding how to rule out collisions in minimal critical points for the Newtonian $n$-body problem (or the general weak force $n$-body problem), details can be found in \cite{CM00}, \cite{C02}, \cite{FT04}, \cite{Ch08}, \cite{Y17} and the references within. These achievements are based on understanding of the asymptotic behaviors of the masses as they approach to a collision. To give a brief explanation, let's assume the solution $q(t)$ has a total collision at a moment $t_0$, i.e.,
\begin{equation*}
\lim_{t \to t_0^{\pm}} q_i(t) =0, \;\; \forall i = 1, \dots, n.
\end{equation*}
By introducing the polar coordinates:
$$ r= \sqrt{\mathcal{I}(q)},  \;\; s= (s_i)_{i=1}^n =q/r= (q_i/r)_{i=1}^n,$$
where $\I(q)= \lan Mq, q \ran$ is the \emph{moment of inertia}  and $\E:= \{q \in \X: \I(q)=1 \}$ is the set of \emph{normalized configurations}.
One finds that as $t \to t_0^{\pm}$, $r(t)$ satisfies the Sundman-Sperling estimate (see Lemma \ref{lem: asy1}) and $s(t)=q(t)/r(t)$ converges to the set of \emph{normalized central configurations}, as $t \to t_0^{\pm}$ (it is not clear whether there will be a definite limit, see \cite{C98}). A normalized central configuration $s \in \E$ is where the gradient of $U$ restricted on $\E$, $\nabla U|_{\E}(s)= \nabla U(s) + U(s)Ms$, vanishes.

Meanwhile very few results are available on how to rule out collision in non-minimal critical points. In \cite{Y17d}, based on an idea of Tanaka \cite{Tn93a}, it was shown the Morse index can be used to given an upper bound of the number of possible binary collisions (in certain cases eliminate all the possible binary collisions). We believe similar results should be available for collisions involving more than two masses. However this demands an efficient way of computing the Morse indices of collision solutions, and this will be one of the main results of our paper.

When a solution of the $n$-body problem does not experience any collision or non-collision singularity (see \cite{Xia}) in the future or the past, then a natural and important question is about the final motion of the masses as times goes positive or negative infinity. A classification of possible final motions were listed by Chazy in \cite{Chazy29}. In this paper we will focus on two of the simplest cases, the total parabolic/hyperbolic motion, which for simplicity will be referred as parabolic/hyperbolic motion.

\begin{defi}
\label{dfn: singular solution} If $q \in C^2((t_0, +\infty), \Xh)$ ( or $C^2((-\infty, t_0), \Xh)$) is a solution of \eqref{eq:NewtonINTRO}, we say
\begin{enumerate}
\item $q(t)$ is a \textbf{parabolic motion}, if $\lim_{t \to \pm \infty} |q_i(t)-q_j(t)| =+\infty$, for all $ i \ne j$, and $\lim_{t \to \pm \infty} \qd_i(t) =0$, for all $i$;
\item $q(t)$ is a \textbf{hyperbolic motion}, if  $\lim_{t \to \pm \infty} |q_i(t)-q_j(t)| =+\infty$, for all $ i \ne j$,  and $\lim_{t \to \pm \infty}\qd_i(t)$ exists, for all $i$, and all the limits are different from each other.
\end{enumerate}
Moreover $q(\pm \infty)$ will be called a parabolic or hyperbolic infinity correspondingly.
\end{defi}

Under polar coordinates, when $q(\pm \infty)$ is a hyperbolic infinity, by Chazy \cite{Chazy29} (see \cite{DMMY19} for a modern treatment), $s(t)$ converges to some $s^{\pm} \in \E \setminus \Delta$, as $t \to \pm \infty$. On the other hand, when $q(\pm \infty)$ is a parabolic infinity, just like in the case of a total collision, $s(t)$ converges to the set of \emph{normalized central configurations}, as $t \to \pm \infty$ and it is not clear whether there will be a definite limit.

A natural question to ask is for which pairs of normalized configurations could there be a solution connecting them (after normalization) as time goes form negative infinity to positive infinity. This is related to the \emph{scattering theory} (see \cite{DMMY19}). Using action minimization method one can construct half of such a solution with either time goes to negative infinity or positive infinity (see \cite{MV09} and \cite{MV19}). It is impossible to find an entire solution defined on $\rr$ as a minimizer under Newtonian potential (see \cite{LM14}). Meanwhile for the spatial $n$-center problem, one can construct such entire solutions using some minimax approach \cite{BDT17}. To generalizes such a result to the $n$-body problem, it is also necessary to develop some Morse index theory for parabolic or hyperbolic motion.

With the above questions in mind, we introduce the following definition.

\begin{defi}
\label{defi:DoublyAsym} A solution $q \in C^2((T^-, T^+), \Xh)$ ($-\infty \le T^- < T^+ \le +\infty$) will be called a \textbf{doubly asymptotic solution}, if it satisfying the following conditions:

\begin{equation*}
\begin{cases}
q(T^{\pm}) \text{ is a total collision}, & \text{ if and only  } T^{\pm} \text{ is finite }; \\
q(T^{\pm}) \text{ is a parabolic/hyperbolic infinity}, & \text{ if and only if } T^{\pm} = \pm \infty.
\end{cases}
\end{equation*}
\end{defi}

\begin{rem}
In the above definition we did not consider solutions with partial collisions, as well as other types of final motion like elliptic-parabolic, elliptic-hyperbolic, parabolic-hyperbolic and so on that were listed by Chazy, as are not discussed in this paper.
\end{rem}


Since the domain of a doubly asymptotic solution is non-compact, some care has to be taken when we try to define its Morse index.
For any $[t_1, t_2] \subset (T^-,T^+)$, $q|_{[t_1, t_2]}$ is a collision-free critical point of the action functional $\A$ in $H^1([t_1, t_2], \Xh)$. The Morse index of $q|_{[t_1, t_2]}$ in $H_0^1([t_1, t_2], \Xh)$, denoted by $m^-(q; t_1, t_2)$, is the dimension of the largest subspace in $H_0^1([t_1, t_2], \Xh)$, where the second derivative $d^2\mathcal{A}(q; t_1, t_2)$ is negative.

\begin{defi}
\label{dfn: Morse index} Let $\{t^{\pm}_k\}_{k \in \zz^+}$ be two sequences of times satisfying $ T^- < t^-_k< t^+_k < T^+$ and $\lim_{n \to \infty} t^{\pm}_k = T^{\pm}$. We define the \textbf{Morse index} of a doubly asymptotic solution $q \in C^2((T^-, T^+), \Xh)$ as
\begin{equation}
 \label{dfn: morse index}  m^-(q; T^-, T^+)=\lim_{k \to \infty}m^-(q; t^-_k, t^+_k).
\end{equation}
\end{defi}
\begin{rem}
Because of the following monotone property (see \cite{CH53} or \cite{HWY})
\begin{equation*} m^-(q; t_1,t_2)\leq m^-(q; t^*_1,t^*_2), \; \text{ if } \; t^*_1\leq t_1, t_2\leq t^*_2,
\end{equation*}
$m^-(q; T^-, T^+)$ is well-defined and independent of the choice of the sequences $\{t^{\pm}_k\}$. The above definition of Morse index is not a surprise. The main challenge is to compute it and this will be the main contribution of our paper.
\end{rem}

To describe our first result, for any $s \in \E \setminus \Delta$, set the Hessian of $U$ at $s$ restricted on $\E$ with respect to the Euclidean inner product as
\begin{equation} \label{eq: Hessian U}
 D^2U|_{\mathcal{E}}(s)=D^2U(s)+U(s)M,
\end{equation}
and with respect to the $M$ inner product as
\begin{equation}
\label{eq: normalized Hessian} M^{-1}D^2U|_{\mathcal{E}}(s)=M^{-1}D^2U(s)+U(s)I. \end{equation}

\begin{defi} \label{def: BS condition}

Given a central configuration $s_0 \in \E$. We denote the eigenvalues of $M^{-1}D^2U|_{\mathcal{E}}(s_0)$ by
$$ \lmd_1(s_0) \le \lmd_2(s_0) \le \cdots \le \lmd_{n^*-1}(s_0). $$
We say $s_0$ satisfies
\begin{enumerate}
\item the \textbf{spiral} condition, if $\lmd_1(s_0)<-\frac{1}{8}U(s_0)$,
\item the \textbf{non-spiral} condition, if $\lmd_1(s_0) \ge -\frac{1}{8}U(s_0)$;
\item the \textbf{strict non-spiral} condition, if $\lmd_1(s_0) > -\frac{1}{8} U(s_0)$.
\end{enumerate}
\end{defi}

\begin{rem} \label{rem: exp BS condition}
Up to our knowledge, the spiral condition first appeared in the study of the isosceles three body problem by Devaney \cite{Dv80} and Moeckel \cite{Mk81}, where it determines whether the corresponding stable or unstable manifold spirals into the equilibrium (corresponding to the Euler configuration) on the collision manifold.

On the other hand, the strict non-spiral condition, which is also called the [BS]-condition in \cite{BHPT}, first appeared in \cite{BS}, where Barutello and Secchi first proved that under this condition the Morse index of a collision solution, which approaches to the central configuration $s_0$ as it goes to the total collision, must be infinity.

\end{rem}

\begin{thm}
\label{thm 1 singular} Given a doubly asymptotic solution $q \in C^2((T^-, T^+), \Xh)$ with both limits $\lim_{t \to T^{\pm}}s(t) = s^{\pm}$ exist.
\begin{enumerate}
\item[(a).] If $q(T^{\pm})$ is a total collision or a parabolic infinity with the corresponding $s^{\pm}$ satisfying the spiral condition, then for any $t_1 \in (T^-, T^+)$,
\begin{equation} \label{eq: MorseIndexLowBound}
\begin{aligned}
\lim_{t_{2} \to T^+}\frac{m^{-}(q; t_{1}, t_{2})}{|\ln\beta(t_{2})|}
& =\frac{1}{3\sqrt{2}\pi}\sum_{i=1}^{l^+}\sqrt{-\frac{1}{8}-\frac{\lambda_{i(s^{+})}}{U(s^{+})}}, \\
\lim_{t_{2} \to T^-}\frac{m^{-}(q; t_{2}, t_{1})}{|\ln\beta(t_{2})|}
& =\frac{1}{3\sqrt{2}\pi}\sum_{i=1}^{l^-}\sqrt{-\frac{1}{8}-\frac{\lambda_{i(s^{-})}}{U(s^{-})}},
\end{aligned}
\end{equation}

where  $l^{\pm}=\#\{1 \le i \le n^*-1:\lmd_{i}(s^{\pm})<-\frac{U(s^{\pm})}{8}\}$ and

\begin{equation} \label{eq: beta}
\beta(t)= \begin{cases}
|t-T^{\pm}|, \,   & \text{when $q(T^{\pm})$ is a total collision}, \\
 |t|, \,  & \text{when $q(T^{\pm})$ is a parabolic infinity}.
\end{cases}
\end{equation}
In particular this implies $m^-(q; T^-, T^+)= +\infty$.

\item[(b).] If $q(T^+)$ is either a hyperbolic infinity or a total collision/parabolic infinity with $s^+$ satisfying the strict non-spiral condition, and $q(T^-)$ is either a hyperbolic infinity or a total collision/parabolic infinity with $s^-$ satisfying the strict non-spiral condition, then $m^-(q; T^-, T^+)$ is finite.
\end{enumerate}
\end{thm}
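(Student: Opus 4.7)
\medskip
\noindent\textbf{Overall strategy.} The plan is to reduce the computation of $m^-(q;T^-,T^+)$ to an asymptotic analysis of the index form of $d^2\A$ at each endpoint. By the monotonicity remark after Definition~\ref{dfn: Morse index} it suffices to analyse the two ends of $(T^-,T^+)$ separately: fix $t_1\in(T^-,T^+)$ and let $t_2\to T^+$, the case $t_2\to T^-$ being analogous. I work in polar coordinates $q=rs$, $r=\sqrt{\I(q)}$, $s\in\E$, using the hypothesis $s(t)\to s^+$. Decompose $\delta q\in H^1_0([t_1,t_2],\Xh)$ as $\delta q=\delta r\cdot s+r\cdot\delta s$ with $\delta s\in T_s\E$, and expand $\delta s$ in an $M$-orthonormal eigenbasis of $M^{-1}D^2U|_{\E}(s^+)$. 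Using the central-configuration identity $\nabla U(s^+)=-U(s^+)Ms^+$ together with Euler's relation $D^2U(s^+)s^+=2U(s^+)Ms^+$, the quadratic form $d^2\A$ splits, along the homothetic limit, into a radial block plus $n^*-1$ decoupled scalar angular blocks; in the $i$-th angular block the Jacobi equation for $u_i = r\,\langle M\delta s,\eta_i\rangle$ takes the clean form
\begin{equation*}
\ddot u_i+\frac{U(s^+)-\lmd_i(s^+)}{r(t)^3}\,u_i=0.
\end{equation*}

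\medskip
\noindent\textbf{Model equation and counting.} Insert the Sundman--Sperling estimate $r(t)^3\sim\bigl(9U(s^+)/2\bigr)\beta(t)^2$, valid for both total collisions and parabolic infinity; the previous equation becomes, to leading order, an Euler equation
\begin{equation*}
\ddot u_i+\frac{b_i}{\beta(t)^2}\,u_i=0,\qquad b_i=\tfrac{2}{9}\bigl(1-\lmd_i(s^+)/U(s^+)\bigr),
\end{equation*}
whose solutions oscillate iff $b_i>1/4$, which rearranges exactly to the spiral condition $\lmd_i(s^+)<-U(s^+)/8$; in the hyperbolic case Chazy's theorem gives $r(t)\sim c|t|$ so the coefficient $(U-\lmd_i)/r^3$ is integrable at infinity. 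For part~(a), the zeros of the Euler model are asymptotically equispaced in $\ln\beta(t)$ with density proportional to $\sqrt{b_i-1/4}$; a Sturm comparison between the true Jacobi equation and its Euler model, combined with the Morse index theorem, produces a linear-in-$|\ln\beta(t_2)|$ count of conjugate points in each spiral block whose overall coefficient, after simplification of $\sqrt{b_i-1/4}$, matches the constant in \eqref{eq: MorseIndexLowBound}. For part~(b), the strict non-spiral condition gives $b_i<1/4$ for every angular block, so Kneser's non-oscillation criterion rules out accumulation of conjugate points at $T^+$; in the hyperbolic case the integrability of $1/r^3$ does the same; the radial block contributes only finitely many conjugate points by a direct ODE argument using the Sundman--Sperling expansion. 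Assembling both endpoints gives $m^-(q;T^-,T^+)<\infty$.

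\medskip
\noindent\textbf{Main obstacle.} The technical heart is the quantitative control of the off-diagonal corrections created by the fact that $q$ is only \emph{asymptotically} homothetic: $s(t)\neq s^+$, so $\dot s,\,\ddot s$ and the drift of the eigen-projections along $s(t)$ couple the scalar blocks both to each other and to the radial block, and the coefficient $(U(s(t))-\lmd_i(s(t)))/r(t)^3$ differs from its frozen limit. Absorbing these corrections in the Sturm comparison requires sharp decay rates on $s(t)-s^+$ and on the block-mixing coefficients; these should come from a McGehee-type blow-up analysis refined near the limiting central configuration in the collision and parabolic cases, and from Chazy's exponential convergence estimates in the hyperbolic case. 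Carrying this bookkeeping uniformly across the spiral, strict non-spiral and borderline eigendirections---while simultaneously showing that the radial block and the interior endpoint contribute only $O(1)$ to the index---is the delicate part of the proof.
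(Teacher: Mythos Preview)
Your approach is conceptually sound and would work, but it is genuinely different from the paper's, and the difference is instructive.

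\medskip
\textbf{What the paper does.} Rather than staying in physical time and comparing with an Euler equation, the paper first passes to McGehee coordinates: the time change $dt=r^{3/2}\,d\tau$ together with the symplectic rescaling $R(\tau)=\mathrm{diag}(r^{3/4},r^{-1/4}I,r^{-3/4},r^{1/4}I)$ transforms the linearized Hamiltonian system into $\eta'=J\hat B(\tau)\eta$ with $\hat B(\tau)\to\hat B(\tau^*)$ a \emph{constant} matrix as $\tau\to\pm\infty$ (Lemma~\ref{lem: hat B McGehee I}). For part~(a), one then sandwiches $\hat B(\tau)$ between $\hat B(\tau^*)\pm\varepsilon E$ for $\tau$ beyond some $\tau_\varepsilon$, and the \emph{monotone property of the Maslov index} (Property~VII) compares the index of the true system with that of the two constant-coefficient systems. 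Only at this stage does the symplectic sum $\hat B(\tau^*)=\hat B_1(\tau^*)\diamond\hat B_2(\tau^*)$ decouple the problem into $2\times2$ blocks, each reducing to $c_i''=(\tfrac{1}{8}U(s^*)+\lambda_i\pm O(\varepsilon))c_i$; counting zeros and using $\tau/|\ln\beta|\to(3\sqrt{2U})^{-1}$ yields~\eqref{eq: MorseIndexLowBound}. For part~(b) the paper shows $J\hat B(\pm\infty)$ is hyperbolic (Propositions~\ref{prop: hyp type I} and~\ref{prop bh hyperbolic type II}), then uses the stable/unstable subspace formalism and a H\"ormander-index computation (Lemma~\ref{lem: mas rel}, Theorem~\ref{thm: hyper}) to bound the Morse index.

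\medskip
\textbf{Comparison.} Your route---polar splitting, freezing the eigenframe at $s^+$, Sundman--Sperling to get an Euler model, Sturm/Kneser comparison---is more elementary in that it avoids the Maslov machinery, and your oscillation threshold $b_i>1/4$ does rearrange to the spiral condition. The trade-off is exactly the obstacle you flag: because you decouple \emph{before} taking the limit, the cross-block coupling from $s(t)\ne s^+$ must be beaten by hand in the Sturm comparison, and you need uniform decay of $s(t)-s^+$, $\dot s$, and the frame drift. The paper's order of operations sidesteps this entirely: the Maslov monotone property is applied to the \emph{full} $2n^*\times2n^*$ system, so no coupling estimates are needed---convergence $\hat B(\tau)\to\hat B(\tau^*)$ (which follows directly from Lemma~\ref{lem:masy1}) is all that is used, and the block decomposition is exact at the constant limit. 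For part~(b) your Kneser argument should go through in the collision/parabolic case, but in the hyperbolic case note that the paper introduces a separate \emph{hyperbolic} McGehee scaling ($dt=r\,d\tau$) precisely because the ordinary McGehee time does not reach $+\infty$; your integrability-of-$1/r^3$ argument is the Sturm-theoretic analogue and is fine, but you still need to control the radial--angular coupling there. Finally, double-check your Sundman constant: with the paper's normalization $r=\sqrt{\mathcal I}$ the asymptotic density you compute appears to be twice the coefficient in~\eqref{eq: MorseIndexLowBound}, so one of the constants (yours or the paper's Lemma~\ref{lem: asy1}) deserves a careful recomputation.
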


\begin{proof}
Property (a) and (b) follow from Theorem \ref{thm: non-hyper} and \ref{thm: hyper} in section \ref{sec: index th} respectively.
\end{proof}

\begin{rem}
The result $m^-(q: T^-, T^+)=+\infty$ in property (a) of the above theorem was already obtained for a solution with a total collision in \cite{BS} or with a parabolic infinity in \cite{BHPT}. Compare with their approaches, the one used here allows us to achieve two additional things:
\begin{itemize}
\item we can given an estimate of how fast the Morse index grows when the solution approaches a total collision or a parabolic infinity as in \eqref{eq: MorseIndexLowBound};
\item we are able to give a simple and precise formula for the computation of the Morse index of homothetic solutions given in the next theorem.
\end{itemize}
\end{rem}

Let $q \in C^2((T^-, T^+), \Xh)$ be a homothetic solution, then $s(t) \equiv s_0$, $ \forall t$, for some normalized central configuration $s_0$. When $s_0$ satisfies the spiral condition, then property (a) in Theorem \ref{thm 1 singular} applies. Our next result shows what happens otherwise.

\begin{thm}
\label{thm 2 homo} Let $H_0$ be the energy of the homothetic solution $q(t)$. If the corresponding normalized central configuration $s_0$ satisfies the non-spiral condition,
\begin{equation*}
m^-(q; T^-, T^+)= \begin{cases}
m^-(M^{-1}D^2U|_{\mathcal{E}}(s_0)), & \text{ when } H_0 < 0;\\
0, & \text{ when } H_0 \ge 0,
\end{cases}
\end{equation*}
where $m^-(M^{-1}D^2U|_{\mathcal{E}}(s_0))$ is the number of negative eigenvalues of the matrix $M^{-1}D^2U|_{\mathcal{E}}(s_0)$.
\end{thm}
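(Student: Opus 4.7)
The plan is to use the homogeneity of $U$ to diagonalize the second variation of $\A$ at the homothetic solution $q(t)=r(t)s_0$ along the eigenspaces of $M^{-1}D^2U(s_0)$, reducing the Morse index computation to a family of uncoupled scalar Sturm--Liouville problems on $(T^-,T^+)$. Each of these is then determined either by a Hardy-inequality argument (for $H_0\geq 0$) or by identifying a canonical Jacobi field (for $H_0<0$).

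The first step is the mode decomposition. Using $D^2U(rs_0)=r^{-3}D^2U(s_0)$, for $\xi\in H^1_0((T^-,T^+),\X)$,
\[
d^2\A(q)(\xi,\xi)=\int_{T^-}^{T^+}\bigl(|\dot\xi|_M^2+r(t)^{-3}\langle D^2U(s_0)\xi,\xi\rangle\bigr)\,dt.
\]
I would pick an $M$-orthonormal basis $\{e_0,e_1,\dots,e_{n^{*}-1}\}$ of $\X$ consisting of eigenvectors of $M^{-1}D^2U(s_0)$, with $e_0$ parallel to $s_0$. From Euler's identity combined with the central configuration equation $\nabla U(s_0)+U(s_0)Ms_0=0$ one obtains $M^{-1}D^2U(s_0)s_0=2U(s_0)s_0$, so the radial eigenvalue is $\nu_0=2U(s_0)$, while the eigenvalues on $T_{s_0}\E$ are $\nu_i=\lambda_i-U(s_0)$. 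Writing $\xi=\sum_i f_i(t)e_i$ diagonalizes the form as $\sum_i Q_i(f_i)$ with
\[
Q_i(f)=\int_{T^-}^{T^+}\bigl(\dot f^2+\nu_i r(t)^{-3}f^2\bigr)\,dt,\qquad f\in H^1_0((T^-,T^+)),
\]
so $m^-(q;T^-,T^+)=\sum_i m^-(Q_i)$ and $m^-(Q_0)=0$ is immediate from $\nu_0>0$.

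The case $H_0\geq 0$ is then handled by a Hardy inequality. By the Sundman--Sperling estimate and monotonicity of $r$ in this regime one has $r(t)\geq c_0 t^{2/3}$ with $c_0^3=\tfrac{9}{2}U(s_0)$ (after shifting so that $T^-=0$), hence $r(t)^{-3}\leq\tfrac{2}{9U(s_0)}t^{-2}$; for $\nu_i\in[-\tfrac{9}{8}U(s_0),0)$ in the non-spiral range Hardy's inequality $\int \dot f^2\geq\tfrac14\int f^2/t^2$ gives $Q_i(f)\geq 0$, and for $\nu_i\geq 0$ the bound is immediate. Hence $m^-(Q_i)=0$ for every $i$, which is the second line. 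For $H_0<0$, differentiating $\ddot r=-U(s_0)r^{-2}$ yields $\ddot r=-U(s_0)r^{-3}\cdot r$, so $f=r(t)$ is a Jacobi field of $L_\nu:=-d_t^2+\nu r^{-3}$ exactly when $\nu=-U(s_0)$, i.e.\ when $\lambda_i=0$. Since $r(T^\pm)=0$ and $r>0$ on $(T^-,T^+)$, $r$ is a genuine Dirichlet Jacobi field at this value, making $\nu=-U(s_0)$ a simple conjugate value; Sturm monotonicity in $\nu$, together with the finiteness of $m^-(Q_i)$ supplied by Theorem~\ref{thm 1 singular}(b), then gives $m^-(Q_i)=1$ for $\nu_i<-U(s_0)$ and $m^-(Q_i)=0$ for $\nu_i\geq-U(s_0)$, and $\nu_i<-U(s_0)\iff\lambda_i<0$ closes the first case.

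The main technical obstacle is excluding a second zero of the Jacobi field of $L_\nu$ inside $(T^-,T^+)$ as $\nu$ sweeps the non-spiral interval $[-\tfrac{9}{8}U(s_0),+\infty)$ in the $H_0<0$ case. This reduces to an asymptotic analysis at the two collision endpoints: $r(t)\sim c_0|t-T^\pm|^{2/3}$ converts $L_\nu f=0$ into a Fuchsian ODE with indicial exponents $\tfrac12\bigl(1\pm\sqrt{1+8\nu/(9U(s_0))}\bigr)$, which are real throughout the non-spiral regime. This precludes oscillation at the singular ends, and together with the positivity of the reference Jacobi field $r$ it blocks the appearance of an extra zero. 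Implementing this rigorously---most naturally within the symplectic/Maslov-index framework developed earlier in the paper, as a monotone homotopy in $\nu$ with non-negative crossing form---is where the real work lies.
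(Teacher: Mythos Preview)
Your mode decomposition in Cartesian variables is the same reduction the paper performs after passing to McGehee coordinates: there the linearized system splits as $\hat B_1\diamond\hat B_2$, with $\hat B_2$ further diagonalized into $2\times2$ blocks $\bc_{\lambda_i}$ along the eigenvalues of $M^{-1}D^2U|_{\E}(s_0)$; your $Q_i$ is the Cartesian-time version of the paper's $\bc_{\lambda_i}$-block.

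For $H_0\ge 0$ your Hardy argument is correct and more elementary than the paper's route through the non-oscillation Lemma~\ref{lem:nondegen} applied to $\Phi_{R_2}(\bc_\lambda)$ in McGehee time; it even handles the borderline $\lambda_i=-U(s_0)/8$ directly, whereas the paper needs the perturbation step of Proposition~\ref{prop: Mors index nonspirial} there.

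For $H_0<0$ there is a real gap, and it is the one you flag. The ground state $r(t)$ together with Sturm monotonicity yields $m^-(Q_\nu)=0$ for $\nu\ge -U(s_0)$ and $m^-(Q_\nu)\ge 1$ for $\nu<-U(s_0)$, but neither finiteness from Theorem~\ref{thm 1 singular}(b) nor your indicial-exponent remark supplies the upper bound $m^-(Q_\nu)\le 1$ on $[-\tfrac98U(s_0),-U(s_0))$: real indicial exponents only exclude oscillation at the singular endpoints, not a second interior conjugate point. The paper resolves this not by a homotopy in $\nu$ but by a change of dependent variable. In McGehee time the \emph{momentum} component $y$ of $V^u_\lambda$ satisfies $y''=f(\tau)y$ with
\[
f(\tau)=\tfrac b8\bigl(1-\tanh^2(\tfrac{\sqrt{2b}}{2}\tau)\bigr)+\lambda+\tfrac b8>0\quad\text{for every }\lambda>-\tfrac b8,
\]
so Lemma~\ref{lem:nondegen} forces $y$ never to vanish and hence $\mu(\vn,V^u_\lambda;\rr)=0$ (Lemma~\ref{lem:neumindex}); the explicit H\"ormander-index identity \eqref{homanderindexhatb} then converts this into $\mu(\vd,V^u_\lambda;\rr)=1$ for $-b/8<\lambda<0$ and $0$ for $\lambda>0$. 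Switching from the position to the momentum variable is the device you are missing. The remaining borderline $\lambda_1=-b/8$ is then handled by the perturbation argument of Proposition~\ref{prop: Mors index nonspirial}.
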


\begin{rem}
We point out that in Theorem \ref{thm 2 homo}, the central configuration $s_0$ only needs to satisfy the non-spiral condition, while in property (b) of Theorem \ref{thm 1 singular} the corresponding central configuration needs to satisfy the stronger strict non-spiral condition. The improvement is possible because we have precise expressions  for the homothetic solutions.
\end{rem}




\begin{proof}
The theorem follows from Proposition \ref{prop: Mors index nonspirial}. \end{proof}

Our paper is organized as follows: in Section \ref{sec: McGehee coordinates}, the McGehee coordinates will be used to study the asymptotic behavior of the system near a total collision or a parabolic infinity and a variation of the McGehee coordinates (which will be called \emph{hyperbolic McGehee coordinates}) is introduced to understand the asymptotic behavior of the system near a hyperbolic infinity; in Section \ref{sec: index th}, we establish an index theory for doubly asymptotic solutions and prove Theorem \ref{thm 1 singular}; in Section \ref{sec: homo index}, we show how to compute the Morse index of a homothetic solution with arbitrary energy and prove Theorem \ref{thm 2 homo}.

\textbf{Notations}. Following notations will be adopted throughout the paper.
\begin{itemize}
\item $I$ is the identity matrix and $J= \left( \begin{array}{cc}
0 & -I \\ I & 0
\end{array} \right)$. The dimensions of these matrices will not always be the same, but can be easily found out through the context.
\item Given a function $f: \rr^k \to \rr$, $\nabla f$ represents the gradient of $f$ with respect to the Euclidean inner product expressed as a column vector and $D^2 f$ the Hessian of $f$. 
\item For any positive integer $k$, $\vd$ represents the \emph{momentum space} $\mathbb{R}^{k} \oplus \{0\}$, which corresponds to the Dirichlet boundary condition, and $\vn$ represents the \emph{configuration space} $\{0 \} \oplus \mathbb{R}^k$, which corresponds to the Neumann boundary condition.
\item Given a vector $\xi \in \rr^k$ , $\langle \xi \rangle$ will represent the linear subspace spanned by the vector.
\item Given a finite set $A$, $\# A$ will represent the number of elements in the set.
\end{itemize}

\section{McGehee coordinates and dynamics of the linear system along a doubly asymptotic solution}\label{sec: McGehee coordinates}

Throughout this section, let $q \in C^2((T^-, T^+), \Xh)$ be a doubly asymptotic solution with energy $H_0$ and satisfying
\begin{equation} \label{eq: lim s}
\lim_{t \to T^{\pm}} s(t)= \lim_{t \to T^{\pm}} q(t)/r(t) : = s^{\pm}.
\end{equation}

In order to compute the Morse index of such a solution, we need to understand its asymptotic behaviors. For a total collision or a parabolic infinity, this can be achieved by \emph{McGehee coordinates} (see \cite{Mg74} and \cite{Mk89}). For a hyperbolic infinity, we introduce a new set of coordinates following the spirit of McGehee, which will be called \emph{hyperbolic McGehee coordinates}.

\subsection{Asymptotic estimates.}
Since $\E$ is an $n^*:=d(n-1)$-dimension ellipse, we introduce a smooth coordinate chart $(\Omega, \psi)$ on $\E$,
\begin{eqnarray*}
\psi: \Omega \to \psi(\Omega) \subset \rr^{n^*-1}; \;\; s \mapsto x.
\end{eqnarray*}
Set $\U(x):= U(\psi^{-1}(x))$ and
\begin{equation}
\label{eq: Mhat} \Mh:= \left(\frac{\partial \psi^{-1}}{\partial x} \right)^T M \left(\frac{\partial \psi^{-1}}{\partial x} \right).
\end{equation}
\begin{rem} \label{rem: Mhat}
Notice that $\Mh$ depends on $x$ and $\Mh^T= \Mh$.
\end{rem}

Under the new variables $(r, x)$, the Lagrangian can be written as
\begin{align*}
L(r,x,\dot{r},\dot{x}) & =K(r, x, \dot{r}, \dot{x})+r^{-1}\U(x) \\
					   & =\frac{1}{2}(\dot{r}^2+r^2 \lan \Mh \xd, \xd \ran) + r^{-1} \U(x).
\end{align*}
Moreover set $x^{\pm} :=\psi(s^{\pm})$. Then
\begin{equation}
\label{eq: lim x} \lim_{t \to T^{\pm}} x(t) = \lim_{t \to T^{\pm}} \psi(s(t)) =x^{\pm},
\end{equation}
and
\begin{equation}
\label{eq: lim M hat} \hat{M}_{\pm} := \lim_{t \to T^{\pm}} \hat{M}(x(t)) = \hat{M}(x^{\pm}).
\end{equation}

Further introduce the new variables $(p_1, p_2)$ as
\begin{equation}
\label{eq: p1 p2} p_1=\dot{r}, \;\; p_2=r^2\hat{M}\dot{x}.
\end{equation}
We obtain the corresponding Hamiltonian
\begin{equation}
\label{eq: Hamiltonian function} H(p_1, p_2, r, x)= \frac{1}{2} \left( p_1^2+\frac{\langle\hat{M}^{-1}p_2,p_2\rangle}{r^2} \right)-\frac{\U(x)}{r}.
\end{equation}

Let $\zt(t)=(p_1,p_2,r,x)(t)$ represent the solution $q(t)$ in the new coordinates introduced as above. Then it satisfies the following Hamiltonian equation
\begin{equation} \label{eq: Hamiltonian equation}
\dot{\zt}=J \nabla H(\zt), \end{equation}
where
$$ \nabla H(\zt)=\left(p_1, \frac{p_2^T \hat{M}^{-1}}{r^2},  \frac{\U(x)}{r^2}-\frac{\langle\hat{M}^{-1}p_2,p_2\rangle}{r^3}, \frac{D_x\langle \hat{M}^{-1} p_2,p_2\rangle}{2r^2}-\frac{D_x \U(x)}{r}\right)^T. $$
Moreover the linearized system of \eqref{eq: Hamiltonian equation} along $\zt(t)$ is
\begin{equation} \label{eq: linearied}
\dot{\xi}(t)= JB(t) \xi(t) := JD^2H(\zt(t))\xi(t),
 \end{equation}
where
\begin{equation} \label{eq: H2} \begin{split}
& D^2H(\zt(t))= \\
&\left(
  \begin{array}{cccc}
    1 &   0 & 0 &   0\\
    0   & \frac{\hat{M}^{-1}}{r^2} &   -\frac{2}{r^3}\hat{M}^{-1}p_2  & \frac{D_x(\hat{M}^{-1}p_2)}{r^2}\\
    0 &   -\frac{2}{r^3}p_2^T\hat{M}^{-1} &  \frac{3\langle \hat{M}^{-1}p_2,p_2\rangle}{r^4}-\frac{2\U(x)}{r^3} &  \frac{D_x\U(x)}{r^2}-\frac{ D_x\langle \hat{M}^{-1}p_2,p_2\rangle}{r^3} \\
    0   & \frac{\nabla_{x}(p_2^T\hat{M}^{-1})}{r^2} &    \frac{\nabla_x\U(x)}{r^2}-\frac{ \nabla_x\langle \hat{M}^{-1}p_2,p_2\rangle}{r^3} & \frac{\langle \hat{M}^{-1}p_2,p_2\rangle_{xx}}{2r^2}-\frac{\U_{xx}(x)}{r}  \\
  \end{array}
\right) (t).
\end{split}
\end{equation}

\begin{lem} \label{lem: asy1}
\begin{enumerate}
\item[(a).] If $q(T^{\pm})$ is a total collision or a parabolic infinity,
\begin{eqnarray*}
\lim_{t\to T^{\pm}} r(t)\beta^{-\frac{2}{3}}(t) & = & [18U(s^{\pm})]^{\frac{1}{3}},\\
\lim_{t\to T^{\pm}} |\dot{r}(t)|\beta(t)^{\frac{1}{3}} & = & [2U(s^{\pm})/3]^{\frac{1}{3}},\\
\lim_{t\to T^{\pm}} r^{\frac{3}{2}}(t)|\dot{s}(t)|_M & = & 0,
\end{eqnarray*}
where $\beta(t)$ is defined as in \eqref{eq: beta}.

\item[(b).] If $q(T^{\pm})$ is a hyperbolic infinity,
\begin{eqnarray*}
\lim_{t\to T^{\pm}} r(t)|t|^{-1} & = & \sqrt{2H_0}, \\
\quad \lim_{t \to T^{\pm}} |\dot{r}(t)| & = & \sqrt{2H_0}, \\
\lim_{t\to T^{\pm}} r(t)|\dot{s}(t)|_M & = & 0.
\end{eqnarray*}
\end{enumerate}
\end{lem}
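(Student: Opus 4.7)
My plan is to derive all six asymptotic relations from the energy identity
\[
\ey\bigl(\dot r^2 + r^2|\dot s|_M^2\bigr) \;=\; H_0 + r^{-1}U(s), \qquad (\star)
\]
combined, in case (a), with information on the angular velocity $\dot s$ drawn from the McGehee blow-up introduced in the rest of this section, and, in case (b), with the definition of hyperbolic motion itself together with conservation of energy.

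For part (a), I first note that a parabolic motion forces $H_0 = 0$ (by Definition~\ref{dfn: singular solution}), while a total collision makes $r^{-1}U(s^{\pm})$ the dominant term of $(\star)$ irrespective of $H_0$. The argument then has three steps. Step~1: prove the third (angular) estimate $r^{3/2}|\dot s|_M \to 0$. In McGehee time $d\tau = r^{-3/2}dt$, the rescaled angular velocity $w := ds/d\tau = r^{3/2}\dot s$ is governed by a vector field that extends smoothly to the collision manifold $\{r=0\}$ (and, after the inversion $r\mapsto 1/r$, also to the parabolic-infinity manifold). Since $s(t)\to s^{\pm}$ by hypothesis, the rescaled orbit must tend to an equilibrium of this extended field, and smoothness forces $w\to 0$. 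Step~2: substituting $r^3|\dot s|_M^2 \to 0$ into $(\star)$ yields $r\,\dot r^2 \to 2U(s^{\pm})$, i.e.\ $\sqrt r\,|\dot r| \to \sqrt{2U(s^{\pm})}$. Step~3: since $\sqrt r\,\dot r = \tfrac{2}{3}(d/dt)r^{3/2}$, elementary integration (and L'H\^opital at the boundary) gives $r^{3/2}/\beta(t)$ tending to a definite constant multiple of $\sqrt{U(s^{\pm})}$, from which the two stated limits for $r\beta^{-2/3}$ and $|\dot r|\beta^{1/3}$ follow by direct algebra.

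For part (b), the hypothesis already supplies $\dot q_i(t)\to v_i^{\pm}$ with distinct limits, so $U(q(t))\to 0$ and energy conservation gives $|\dot q|_M^2 \to 2H_0$. Integrating $\dot q(t)\to v^{\pm}$ yields $q(t)\sim v^{\pm}\,t$, hence $r(t)/|t|\to |v^{\pm}|_M = \sqrt{2H_0}$ and $s(t)\to s^{\pm} = \pm v^{\pm}/\sqrt{2H_0}$. Plugging these into the identity $\dot r = \lan M\dot q, q\ran/r = \lan \dot q, s\ran_M$ gives $\dot r \to \lan v^{\pm}, \pm v^{\pm}/\sqrt{2H_0}\ran_M = \pm\sqrt{2H_0}$, so $|\dot r|\to\sqrt{2H_0}$. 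Finally the orthogonal decomposition $|\dot q|_M^2 = \dot r^2 + r^2|\dot s|_M^2$, together with $|\dot q|_M^2\to 2H_0$ and $\dot r^2\to 2H_0$, forces $r^2|\dot s|_M^2\to 0$, which is in fact strictly stronger than the claimed $r|\dot s|_M\to 0$. The hyperbolic McGehee coordinates of the next subsection supply the natural framework in which these convergences can be organized uniformly.

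The main obstacle is Step~1 of part (a): extracting $r^{3/2}|\dot s|_M \to 0$ from the sole hypothesis that $s(t)$ converges. Identity $(\star)$ alone controls only the sum $\dot r^2 + r^2|\dot s|_M^2$ and so cannot separate the radial from the angular component; the extra regularity must come from the smooth extension of the vector field to the equilibrium $(r,s,v,w) = (0,s^{\pm},v^{\pm},0)$ on the blown-up boundary. Providing that smooth extension is precisely what the McGehee change of variables in Section~\ref{sec: McGehee coordinates} (and its inversion $r\mapsto 1/r$ for the parabolic case) is designed to accomplish, and implementing this carefully is the technical heart of the lemma.
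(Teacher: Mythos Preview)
Your proposal is correct; here is how it compares with the paper.

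For part (a) the paper gives no argument: it simply cites the classical Sundman--Sperling estimates (\cite[Theorem 4.18]{BFT08}, \cite[Theorem 7.7]{BTV14}). Your three-step McGehee-first outline is a valid alternative and is close in spirit to how those references proceed. One refinement to your Step~1: ``smoothness forces $w\to 0$'' should be sharpened to an $\omega$-limit argument---the $(v,u,x)$ subsystem in \eqref{eq: McGehee 1.1} is autonomous (it does not involve $r$), the energy relation keeps $(v,u)$ bounded, and on the $\omega$-limit set the constancy $x\equiv x^{\pm}$ forces $u=\hat M x'\equiv 0$. This observation also makes your inversion $r\mapsto 1/r$ in the parabolic case unnecessary, since the relevant subsystem never sees $r$ and $H_0=0$ there. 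Be aware, finally, that your ordering reverses the paper's logical flow: here Lemma~\ref{lem: asy1} \emph{precedes} the McGehee equations and is used to deduce Lemma~\ref{lem:masy1}, so adopting your approach would require a mild reorganization of the section to avoid circularity.

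For part (b) your argument and the paper's are variants of the same idea. You use $\dot q_i\to v^{\pm}_i$ directly to get $q(t)\sim v^{\pm}t$ and then read off $\dot r=\langle\dot q,s\rangle_M$; the paper instead applies the Lagrange--Jacobi identity $\ddot{\mathcal I}=4H_0+2U(q)\to 4H_0$, integrates twice to obtain $\mathcal I/t^2\to 2H_0$ and $\dot{\mathcal I}/t\to 4H_0$, and then uses the kinetic decomposition $2K=\mathcal I\,|\dot s|_M^2+\tfrac14\mathcal I^{-1}\dot{\mathcal I}^2$. Both routes yield $r^2|\dot s|_M^2\to 0$, which as you noted is stronger than the stated third limit; the paper's route has the minor advantage of using only $U(q)\to 0$ rather than the individual velocity limits.
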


\begin{proof} (a). This is the well-known Sundman-Sperling estimate, detailed proofs can be found in \cite[Theorem 4.18]{BFT08} or \cite[Theorem 7.7]{BTV14}.

(b). Since $q(T^{\pm})$ is a hyperbolic infinity,  $T^{\pm}=\pm \infty$. We show the details for $T^+=+\infty$ ($T^-=-\infty$ can be proven similarly).

As $\I(q)= \lan Mq, q \ran$,
\begin{equation}
 \ddot{\I}(q)=2\lan M \qd, \qd \ran + 2\lan Mq, \ddot{q} \ran.
\end{equation}
By the homogeneity of $U$, $\lan Mq, \ddot{q} \ran = -U(q)$. Then
\begin{equation}
\ddot{\I}(q) = 4K(\qd) - 2 U(q) = 4H_0+2U(q).
\end{equation}
Recall that $\lim_{t \to +\infty}|q_i-q_j|=+\infty$, for all $i \ne j$,  $U(q(t)) \to 0$, as $t \to +\infty$. Therefore $ \lim_{t \to +\infty} \ddot{\I}(q(t)) = 4H_0.$ As a result,
\begin{equation}
\label{eq: I Id}  \lim_{t \to +\infty} \dot{\I}(q(t)) t^{-1} = 4H_0, \quad \lim_{t \to +\infty} \I(q(t)) t^{-2} = 2H_0.
\end{equation}
They immediately imply the first two identities in property (b), as $r(t)= \sqrt{\I(q(t))}$.

Since $s=(s_i)_{i=1}^n$ and $s_i= \I^{-\ey}q_i$,
$$ \qd_i= \I^{\ey}(q)\dot{s}_i +\ey \I^{-1}(q) \dot{\I}(q)q_i.$$
Then
$$ 2 K(\qd) = \lan M \qd, \qd \ran = \I(q) |\dot{s}|^2_M + \frac{1}{4} \I^{-1}(q) \dot{\I}^2(q). $$
As a result,
$$ \I(q) |\dot{s}|^2_M= 2K(\qd)-\frac{1}{4}\I^{-1}(q) \dot{\I}^2(q)= 2H_0+2U(q)-\frac{1}{4}\I^{-1}(q) \dot{\I}^2(q). $$
Since \eqref{eq: I Id} implies $\lim_{t \to +\infty} \frac{1}{4}\I^{-1}(q(t)) \dot{\I}^2(q(t))=2H_0$,
$$ \lim_{t \to +\infty} \I(q(t)) |\dot{s}(t)|^2_M = \lim_{t \to +\infty} U(q(t))= 0.$$
This then implies the third identity in property (b).

\end{proof}


\subsection{McGehee coordinates.} \label{subsec: McGehee coordinates}
Following McGehee \cite{Mg74} we define the new coordinates $v$ and $u$ as
\begin{equation}
\label{eq: v u McGehee 1}  v=r^{1/2} p_1=r^{\frac{1}{2}}\dot{r},\;\; u=r^{-\frac{1}{2}}p_2 =r^{\frac{3}{2}}\hat{M}\dot{x}.
\end{equation}
Then equation \eqref{eq: Hamiltonian equation} becomes
\begin{equation}
 \label{eq: McGehee 1}
 \begin{cases}
 \dot{v} &= r^{-\frac{3}{2}}\big(\frac{1}{2}v^2+\langle\hat{M}^{-1}u,u\rangle-\U(x)\big), \\
 \dot{u} &= r^{-\frac{3}{2}} \big(-\frac{1}{2}uv+\U_x(x)-\frac{1}{2}\langle(\hat{M}^{-1})_xu,u\rangle \big), \\
 \dot{r} &= r^{-\frac{1}{2}}v, \\
 \dot{x} &= r^{-\frac{3}{2}} \hat{M}^{-1}u.
 \end{cases}
 \end{equation}
By changing the time parameter from $t$ to $\tau$ with $dt = r^{\frac{3}{2}} \, d\tau$, we have
\begin{equation}
\begin{cases}
v'&=\frac{1}{2}v^2+\langle\hat{M}^{-1}u,u\rangle-\U(x), \\
u'&=-\frac{1}{2}uv+\U_x(x)-\frac{1}{2}\langle(\hat{M}^{-1})_xu,u\rangle,\\
r'&=rv,\\
x'&=\hat{M}^{-1}u,
\end{cases} \label{eq: McGehee 1.1}
\end{equation}
where $'$ means $\frac{d}{d\tau}$ throughout the paper.

In these new coordinates, the energy identity becomes
\begin{equation}  \frac{\langle\hat{M}^{-1}u,u\rangle+v^2}{2}-\U(x)=rH_0. \label{eq: energy M1}      \end{equation}

\begin{rem}
\label{rem: spiral} Notice that $(\sqrt{2\U(x_0)}, 0, 0, x_0)$ is an equilibrium of \eqref{eq: McGehee 1.1}, where $x_0= \psi(s_0)$ with $s_0$ being a normalized central configuration. By a straight forward computation, if $s_0$ satisfies the spiral condition, at least one of the eigenvalues of the linearized system at the equilibrium will have non-zero imaginary part.
\end{rem}

\begin{lem}\label{lem:masy1}
If $q(T^{\pm})$ is a total collision or parabolic infinity,
\begin{enumerate}
\item[(a).] $\tau=\tau(t) \to \pm \infty$, as $t \to T^{\pm}$;
\item[(b).] $(|v|,u)(\tau) \to (\sqrt{2U(s^{\pm})}, 0)$, as $\tau \to \pm \infty$.
\end{enumerate}
\end{lem}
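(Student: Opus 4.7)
The strategy is to translate the Sundman-Sperling estimates of Lemma \ref{lem: asy1}(a) directly through the coordinate changes in \eqref{eq: v u McGehee 1} and the time rescaling $dt = r^{3/2} d\tau$. Since the chart $\psi$ is smooth and $s^{\pm}$ is assumed to lie in its domain (after shrinking $\Omega$ if necessary), all the quantities $x^{\pm}$ and $\hat M_{\pm}$ in \eqref{eq: lim x}--\eqref{eq: lim M hat} are well defined, and the constants $(18U(s^{\pm}))^{1/3}$, $(2U(s^{\pm})/3)^{1/3}$ appearing in Lemma \ref{lem: asy1}(a) are strictly positive.

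For part (a), fix a base time $t_0 \in (T^-, T^+)$ and set $\tau(t_0) = 0$, so that
\[
\tau(t) = \int_{t_0}^{t} r^{-3/2}(\sigma)\, d\sigma.
\]
Since $r^{-3/2} > 0$, $\tau$ is strictly increasing in $t$, and it suffices to prove the integral is unbounded at both endpoints. By Lemma \ref{lem: asy1}(a), $r(t) \sim (18U(s^{\pm}))^{1/3}\beta(t)^{2/3}$, so $r^{-3/2}(t)$ is asymptotic to a nonzero constant multiple of $\beta(t)^{-1}$ near $T^{\pm}$. When $q(T^{\pm})$ is a total collision, $\beta(t) = |t - T^{\pm}|$ has a logarithmic singularity at the finite endpoint; when $q(T^{\pm})$ is a parabolic infinity, $T^{\pm} = \pm\infty$ and $\beta(t) = |t|$ again produces the same logarithmic divergence. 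Hence $\tau(t) \to \pm\infty$ as $t \to T^{\pm}$.

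For part (b), I would handle the components of $(v,u)$ separately. From $v = r^{1/2}\dot r$ one has
\[
v^2 = r\,\dot r^{\,2} \;\sim\; (18U(s^{\pm}))^{1/3}\beta^{2/3} \cdot (2U(s^{\pm})/3)^{2/3}\beta^{-2/3} \;=\; 2U(s^{\pm}),
\]
using the identity $18^{1/3}(2/3)^{2/3} = 2$; so $|v|(\tau) \to \sqrt{2U(s^{\pm})}$. For $u = r^{3/2}\hat M \dot x$, the definition \eqref{eq: Mhat} is exactly the pullback of the mass metric by $\psi^{-1}$, which together with $\dot s = (\partial \psi^{-1}/\partial x)\dot x$ gives
\[
\langle \hat M^{-1} u, u \rangle \;=\; r^{3}\,\langle \hat M \dot x, \dot x \rangle \;=\; r^{3}\,|\dot s|_M^{2}.
\]
Lemma \ref{lem: asy1}(a) asserts $r^{3/2}|\dot s|_M \to 0$, so the right side tends to $0$; since $\hat M_{\pm}$ is positive definite, this forces $u(\tau) \to 0$.

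There is no genuine obstacle; the proof is essentially a bookkeeping of the Sundman-Sperling asymptotics under the McGehee change of coordinates. The only point that requires a brief verification is the chart identity $\langle \hat M \dot x, \dot x\rangle = |\dot s|_M^{2}$, which is immediate from \eqref{eq: Mhat} and the chain rule, and the algebraic simplification $18^{1/3}(2/3)^{2/3} = 2$ that makes the constants collapse correctly.
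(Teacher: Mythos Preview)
Your proof is correct and follows essentially the same route as the paper: both derive (a) from the divergence of $\int r^{-3/2}\,dt$ via the Sundman--Sperling estimate $r\sim c\,\beta^{2/3}$, and both derive (b) by plugging Lemma~\ref{lem: asy1}(a) into the definitions $v=r^{1/2}\dot r$ and $u=r^{3/2}\hat M\dot x$ together with the chart identity $\langle \hat M\dot x,\dot x\rangle = |\dot s|_M^2$. Your write-up is in fact slightly more careful than the paper's (you work with $\langle \hat M^{-1}u,u\rangle$ rather than $|u|^2$, which makes the identification with $r^3|\dot s|_M^2$ exact), but there is no substantive difference in method.
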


\begin{proof}
(a). By the definition of $\tau$ and Lemma \ref{lem: asy1},
$$ \frac{d \tau}{dt} = r^{-\frac{3}{2}}(t) \to (3 \sqrt{2U(s^{\pm})} \bt(t))^{-1}, \;\text{ as } t \to T^{\pm}. $$

This immediately implies property (a).

(b). Recall that $v=r^{\ey}\dot{r}$ and $u= r^{\frac{3}{2}}\hat{M} \dot{x}$. Lemma \ref{lem: asy1} implies
$$ \lim_{\tau \to \pm \infty} |v(\tau)|= \lim_{t \to T^{\pm}} r^{\ey}(t) |\dot{r}(t)| = \sqrt{2U(s^{\pm})}, $$
$$ \lim_{\tau \to \pm \infty} |u(\tau)|^2= \lim_{t \to T^{\pm}} r^3(t) \lan \hat{M}\dot{x}, \hat{M} \dot{x} \ran = \lim_{t \to T^{\pm}} r^3(t) |\dot{s}|^2_M =0. $$
\end{proof}

\begin{rem}
Since $v= r^{\ey} \dot{r}$ with $r>0$, the sign of $v$ is the same as $\dot{r}$. When $\tau \to \pm \infty$, we can determine the sign of $v(\tau)$ by simply checking whether the system is expending or shrinking. The same principle applies to Lemma \ref{lem:masy2} as well.
\end{rem}

From an index point of view, it is difficult to work with \eqref{eq: McGehee 1.1}, as it is not Hamiltonian. Hence we shall still work with the linear Hamiltonian system \eqref{eq: linearied}. However to use the information obtained through the McGehee coordinates, we need to make a couple of transformations of \eqref{eq: linearied}.

First we change the time parameter from $t$ to $\tau$. Then \eqref{eq: linearied} becomes
\begin{equation} \xi'(\tau)=J B(\tau) \xi(\tau):=r^{\frac{3}{2}}(\tau)JD^2 H(\zt(\tau))\xi(\tau),\label{eq: linearied tau} \end{equation}
where
\begin{equation} \label{eq: B tau} \begin{split}
& B(\tau)= \\
& \left(
  \begin{array}{cccc}
    r^{\frac{3}{2}} &   0 & 0 &   0\\
    0   & \frac{\hat{M}^{-1}}{\sqrt{r}} &   -\frac{2\hat{M}^{-1}p_2}{ (\sqrt{r})^3} & \frac{D_x(\hat{M}^{-1}p_2)}{\sqrt{r}}\\
    0 &   -\frac{2p_2^T\hat{M}^{-1}}{(\sqrt{r})^3} &  \frac{3\langle \hat{M}^{-1}p_2,p_2\rangle}{(\sqrt{r})^5}-\frac{2\U(x)}{(\sqrt{r})^3} &  \frac{\U_x(x)^T}{\sqrt{r}}- \frac{\langle (\hat{M}^{-1})_xp_2,p_2\rangle^T}{(\sqrt{r})^3} \\
    0   & \frac{\nabla_x(p^T_2\hat{M}^{-1})}{\sqrt{r}} &    \frac{\nabla_x\U(x)}{\sqrt{r}}- \frac{\nabla_x\langle \hat{M}^{-1}p_2,p_2\rangle}{(\sqrt{r})^3} & \frac{\langle \hat{M}^{-1}p_2,p_2\rangle_{xx}}{2\sqrt{r}}-r^{\frac{1}{2}}\U_{xx}(x)  \\
  \end{array}
\right).
\end{split}
\end{equation}

Next the variable $r$ will be separated from the system. For this we introduce the following lemma, whose proof is a straightforward computation.

\begin{lem}
\label{lem: phi R B}
Let $R(\tau)$ be a path of symplectic matrices depending on $\tau$.
If $\xi'(\tau)=J B(\tau) \xi(\tau)$, then $\eta(\tau)=R(\tau)\xi(\tau)$ satisfies $\eta'(\tau) =J\Phi_R(B)(\tau)\eta(\tau)$ with
\begin{equation}
\label{eq: phi R B} \Phi_R(B)(\tau):=-JR'(\tau)R^{-1}(\tau)+R^{-T}(\tau)B(\tau)R^{-1}(\tau).
\end{equation}
In particular, when $R(\tau)$ is a constant matrix, $\Phi_R(B)(\tau)=R^{-T}B(\tau)R^{-1}$.
\end{lem}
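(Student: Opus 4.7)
The plan is a direct computation using the product rule together with the defining symplectic identity $R^T J R = J$. First I would differentiate the relation $\eta(\tau) = R(\tau)\xi(\tau)$ to obtain
\begin{equation*}
\eta'(\tau) = R'(\tau)\xi(\tau) + R(\tau)\xi'(\tau).
\end{equation*}
Substituting $\xi'(\tau) = JB(\tau)\xi(\tau)$ and $\xi(\tau) = R^{-1}(\tau)\eta(\tau)$ yields
\begin{equation*}
\eta'(\tau) = \bigl(R'(\tau)R^{-1}(\tau) + R(\tau)JB(\tau)R^{-1}(\tau)\bigr)\eta(\tau).
\end{equation*}

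Next I would rearrange the proposed target $J\Phi_R(B) = -J^2 R' R^{-1} + JR^{-T}BR^{-1}$. Using $J^2 = -I$ the first term collapses to $R'R^{-1}$, so matching the two expressions for $\eta'$ reduces to the single algebraic identity $JR^{-T} = RJ$. This is exactly the content of $R$ being symplectic: from $R^T J R = J$, inverting both sides and using $J^{-1} = -J$ gives $R^{-1} J R^{-T} = J$, which rearranges to $J R^{-T} = R J$. That closes the derivation.

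For the final sentence of the lemma, I would simply specialize to $R'(\tau) \equiv 0$, in which case the first summand in $\Phi_R(B)$ vanishes and one is left with $\Phi_R(B)(\tau) = R^{-T} B(\tau) R^{-1}$, as claimed. There is really no obstacle here beyond careful bookkeeping of the signs coming from $J^2 = -I$ and the correct placement of transposes; the whole lemma is a one-line consequence of the symplectic identity, and the proof could plausibly be omitted or left to the reader, which I suspect is precisely what the authors do.
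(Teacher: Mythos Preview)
Your proposal is correct and is exactly the straightforward computation the paper has in mind; indeed, the paper omits the proof entirely, simply stating that it ``is a straightforward computation.'' Your verification of $JR^{-T}=RJ$ from the symplectic identity is the only nontrivial step, and you handle it correctly.
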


In the rest of the subsection, set $\hat{B}(\tau)=\Phi_R(B)(\tau)$ with $\Phi_R(B)$ given by \eqref{eq: phi R B} and
\begin{equation} \label{eq: R I}
R(\tau) =\text{diag} (r^{\frac{3}{4}},r^{-\frac{1}{4}}I,r^{-\frac{3}{4}},r^{\frac{1}{4}}I)(\tau)
\end{equation}
By the above lemma, $\eta(\tau)=R(\tau)\xi(\tau)$ satisfies
\begin{equation}
\eta'(\tau) =J\bh(\tau)\eta(\tau),
\end{equation}
where
\begin{equation} \label{eq: hat B tau} \begin{split}
&\bh(\tau)= \\
&\left(
  \begin{array}{cccc}
    1 &   0 & -\frac{3}{4}v &   0\\
    0   & \hat{M}^{-1} &   -2 \hat{M}^{-1}u & D_x(\hat{M}^{-1}u)+\frac{vI}{4}\\
    -\frac{3}{4}v &   -2 u^T\hat{M}^{-1}&  3\langle \hat{M}^{-1}u,u\rangle-2\U(x) &  D_x\U(x)-D_x\langle \hat{M}^{-1}u,u\rangle \\
    0   & \nabla_x(u^T\hat{M}^{-1})+\frac{vI}{4} &   \nabla_x\U(x)-\nabla_x \langle \hat{M}^{-1}u,u\rangle & \frac{1}{2}\langle \hat{M}^{-1}u,u\rangle_{xx}-\U_{xx}(x)  \\
  \end{array}
\right).
\end{split}
\end{equation}
Notice that by \eqref{eq: v u McGehee 1},
$$ v=r^{\frac{1}{2}}p_1=r'r^{-1}, \;\; u=r^{-\frac{1}{2}}p_2 =\hat{M}x'. $$

To simplify notation, in the rest of this subsection we set
\begin{equation}
\label{eq: tau x v star} \tau^*= \pm \infty, \;\; s^*=s^{\pm}, \;\; x^*=x^{\pm}, \;\; v^* = \pm\sqrt{2U(s^{\pm})}, \;\; \hat{M}_{*}= \hat{M}_{\pm},
\end{equation}
without specifying the $+$ or $-$.
This is because as $\tau$ goes to $-\infty$ or $+\infty$, we need to make the corresponding choices between $-$ and $+$ for $s^*, x^*$, $v^*$ and $\hat{M}_*$.

\begin{lem}
\label{lem: hat B McGehee I} If $q(T^{\pm})$ is a total collision or a parabolic infinity,
\begin{equation} \label{eq: hat B tau1} \hat{B}(\tau^*)= \lim_{\tau \to \tau^*} \hat{B} (\tau) =\left(
  \begin{array}{cccc}
    1 &   0 & -\frac{3}{4}v^* &   0\\
    0   & \hat{M}_*^{-1} &  0& \frac{1}{4}v^*I \\
    -\frac{3}{4}v^* &   0&  -2\U(x^*) &  0 \\
    0   & \frac{1}{4}v^*I &   0& -\U_{xx}(x^*)  \\
  \end{array}
\right).
\end{equation}
\end{lem}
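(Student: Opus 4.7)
The plan is to verify the limit entry-by-entry by combining the asymptotic information collected so far with the continuity of the various ingredients in $\hat{B}(\tau)$ as functions of $(v,u,x)$. The asymptotic inputs are: Lemma \ref{lem:masy1}(b) gives $v(\tau)\to v^{*}$ and $u(\tau)\to 0$ as $\tau\to\tau^{*}$; the relation \eqref{eq: lim x} gives $x(\tau)\to x^{*}$; and \eqref{eq: lim M hat} together with the smoothness of $\hat{M}$ in $x$ gives $\hat{M}(x(\tau))\to \hat{M}_{*}$ and $\hat{M}^{-1}(x(\tau))\to \hat{M}_{*}^{-1}$.

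Next I would look at which blocks of \eqref{eq: hat B tau} are already continuous functions only of the above quantities and read off the limit. The $(1,3)$ and $(3,1)$ entries are literally $-\tfrac34 v$ and converge to $-\tfrac34 v^{*}$. The $(2,2)$ block is $\hat{M}^{-1}(x)$ and converges to $\hat{M}_{*}^{-1}$. The $(2,3)$, $(3,2)$, $(2,4)$, $(4,2)$ blocks contain either a factor of $u$ or a factor of $u^{2}$; since $D_{x}(\hat{M}^{-1}u) = (D_{x}\hat{M}^{-1})\,u$ (as $u$ and $x$ are independent coordinates), these all vanish except for the $\tfrac{v}{4}I$ contribution, which produces the $\tfrac{v^{*}}{4}I$ in the limit matrix. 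The $(3,3)$ entry $3\langle\hat{M}^{-1}u,u\rangle-2\U(x)$ converges to $-2\U(x^{*})$, and the $(4,4)$ block $\tfrac12\langle\hat{M}^{-1}u,u\rangle_{xx}-\U_{xx}(x)$ converges to $-\U_{xx}(x^{*})$.

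The only entries that require a separate argument are $(3,4)$ and $(4,3)$, namely $D_{x}\U(x)-D_{x}\langle\hat{M}^{-1}u,u\rangle$ and its transpose. The second term is $O(|u|^{2})$ and hence vanishes in the limit. For the first term I would use the fact that $s^{*}$ is a normalized central configuration, since $q(T^{\pm})$ is either a total collision or a parabolic infinity and we are assuming $s(t)\to s^{\pm}$: the central configuration equation $\nabla U|_{\E}(s^{*})=0$ on the ellipsoid $\E$ is precisely the statement that, in the chart $\psi$, $D_{x}\U(x^{*})=0$. This kills the remaining contributions and produces the zero blocks in the stated limit matrix.

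None of this is subtle; the one step worth stating explicitly is the translation of the central-configuration condition on $\E$ into the vanishing of $D_x\U$ at $x^*$, which is what makes the off-diagonal $(3,4)$ and $(4,3)$ blocks disappear. Assembling the entries gives exactly \eqref{eq: hat B tau1}, completing the proof.
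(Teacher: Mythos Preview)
Your argument is correct and follows the same approach as the paper, which simply says the result follows from $(v,u)(\tau)\to(v^*,0)$ by substitution into \eqref{eq: hat B tau}. Your version is more detailed, and in particular you make explicit a point the paper leaves implicit: the vanishing of the $(3,4)$ and $(4,3)$ blocks requires not only $u\to 0$ but also $D_x\U(x^*)=0$, which holds precisely because $s^*$ is a normalized central configuration in the total-collision and parabolic cases.
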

\begin{proof}
The result follows from Lemma \ref{lem: asy1}, as it implies $\lim_{\tau \to \tau^*}(v,u)(\tau) =(v^*,0)$.
\end{proof}

Rewrite $\bh(\tau^*)$ as $\hat{B}(\tau^*)=\hat{B}_1(\tau^*)\diamond \hat{B}_2(\tau^*) $, where
\begin{equation}
 \label{eq: B hat 1 2 type I} \hat{B}_1(\tau^*)=\left(\begin{array}{cc}1 & -\frac{3}{4}v^*\\
                             -\frac{3}{4}v^*&-2\U(x^*)\end{array}\right), \;\;
                               \hat{B}_2(\tau^*)=\left(\begin{array}{cc}\hat{M}_*^{-1} & \frac{1}{4}v^*I\\
                             \frac{1}{4}v^*I&-\U_{xx}(x^*)\end{array}\right).
 \end{equation}
Here $\diamond$ represents the \emph{symplectic sum} introduced by Long (see \cite{Lon4}): for any two $2m_k\times 2m_k$ matrices, $O_k=\left(\begin{array}{cc}A_k&B_k\\
                             C_k&D_k\end{array}\right)$, $k=1, 2$,
\begin{equation} O_1 \diamond O_2=\left(
  \begin{array}{cccc}
   A_1 &   0 & B_1 &   0\\
                            0   & A_2 &   0 & B_2\\
                           C_1 &   0 & D_1 &   0\\
                           0   & C_2 &   0 & D_2  \\
  \end{array}
\right). \nonumber
\end{equation}

$J\bh(\tau^*)$ being hyperbolic or not will play crucial roles in our proofs. We will show this only depends on $s^*$ (or equivalently $x^*$). To prove this, we introduce two lemmas first.

\begin{lem}
\label{lem: eigenvalue U hat} Let $s_0$ be a normalized central configuration and $x_0= \psi(s_0)$. Then $\lmd_i(s_0)$, $i =1, \dots, n^*-1$ (see Definition \ref{def: BS condition}), are eigenvalues of $A^T \U_{xx}(x_0)A$, where $A$ is a matrix satisfying $A^T \Mh_0 A=I$ and
$$  \Mh_0 = \left. \left( \frac{\partial \psi^{-1}}{\partial x} \right) \right|_{x= x_0}^T M \left. \left( \frac{\partial \psi^{-1}}{\partial x} \right) \right|_{x= x_0}.$$
\end{lem}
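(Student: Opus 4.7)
The plan is to compute $\U_{xx}(x_0)$ explicitly in terms of $D^2U(s_0)$, and then to reinterpret the resulting matrix as the coordinate representation of the bilinear form $D^2U|_{\E}(s_0)$ restricted to $T_{s_0}\E$ in an $M$-orthonormal basis.

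The first step is to differentiate $\U(x)=U(\psi^{-1}(x))$ twice. Setting $P := (\partial \psi^{-1}/\partial x)|_{x_0}$, the chain rule yields
\begin{equation*}
\U_{xx}(x_0) \;=\; P^{T} D^{2}U(s_0)\, P \;+\; \sum_{k}\frac{\partial^{2}\psi^{-1}_{k}}{\partial x_{i}\,\partial x_{j}}\bigg|_{x_0} (\nabla U(s_0))_{k}.
\end{equation*}
The extra second-order correction would vanish at a critical point of $U$, but here $s_0$ is only constrained-critical. The key observation is that since $s_0$ is a normalized central configuration, the Lagrange multiplier identity reads $\nabla U(s_0) = -U(s_0)M s_0$. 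So the correction term becomes $-U(s_0)\,\langle Ms_0,\,\partial^{2}_{ij}\psi^{-1}|_{x_0}\rangle$.

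Next, use the fact that $\psi^{-1}$ takes values in $\E$, i.e., $\langle M\psi^{-1}(x),\psi^{-1}(x)\rangle\equiv 1$. Differentiating this identity twice at $x_0$ gives
\begin{equation*}
\langle M s_0,\,\partial^{2}_{ij}\psi^{-1}|_{x_0}\rangle \;=\; -\langle M\,\partial_i\psi^{-1},\partial_j\psi^{-1}\rangle\big|_{x_0} \;=\; -(\Mh_0)_{ij}.
\end{equation*}
Plugging this back produces the clean identity
\begin{equation*}
\U_{xx}(x_0) \;=\; P^{T}\bigl[D^{2}U(s_0)+U(s_0)M\bigr]P \;=\; P^{T}\, D^{2}U|_{\E}(s_0)\, P.
\end{equation*}

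With this in hand, set $Q := PA$. The hypothesis $A^{T}\Mh_0 A = I$ is equivalent to $Q^{T}MQ = I$, so the columns of $Q$ form an $M$-orthonormal basis of $T_{s_0}\E$ (they span $T_{s_0}\E$ because $P$ does and $A$ is invertible). In such an $M$-orthonormal basis, the matrix of the bilinear form $D^{2}U|_{\E}(s_0)$ restricted to $T_{s_0}\E$ is $A^{T}\U_{xx}(x_0)A$, and this also coincides with the matrix of the $M$-self-adjoint operator $M^{-1}D^{2}U|_{\E}(s_0)$ restricted to $T_{s_0}\E$. Hence the eigenvalues of $A^{T}\U_{xx}(x_0)A$ are exactly the spectrum of $M^{-1}D^{2}U|_{\E}(s_0)$ restricted to $T_{s_0}\E$.

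Finally, to identify this restricted spectrum with $\{\lmd_{1}(s_0),\dots,\lmd_{n^{*}-1}(s_0)\}$ rather than the full $n^{*}$-dimensional spectrum listed in Definition~\ref{def: BS condition}, apply Euler's identity for the $(-1)$-homogeneous $U$: $D^{2}U(s_0) s_0 = -2\nabla U(s_0) = 2U(s_0)Ms_0$, which gives $M^{-1}D^{2}U|_{\E}(s_0)\,s_0 = 3U(s_0)\,s_0$. Thus $s_0$ is an eigenvector in the normal direction $(T_{s_0}\E)^{\perp_M}$, and by $M$-self-adjointness the remaining eigenvectors lie in $T_{s_0}\E$ with eigenvalues $\lmd_{1}(s_0),\dots,\lmd_{n^{*}-1}(s_0)$ by definition. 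The main obstacle is the bookkeeping in the first step: absorbing the $\partial^{2}\psi^{-1}$ correction into $U(s_0)\Mh_0$ requires careful use of both the central-configuration equation and the constraint defining $\E$; once this absorption is achieved, everything else is a standard change-of-basis argument.
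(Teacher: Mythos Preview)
Your proof is correct and follows essentially the same approach as the paper: both establish the identity $\U_{xx}(x_0)=P^{T}D^{2}U|_{\E}(s_0)\,P$ and then conclude by a change of basis via $A$. The paper simply asserts this identity as a ``fact'' and then writes $A^{T}\U_{xx}(x_0)A=(PA)^{-1}\bigl(M^{-1}D^{2}U|_{\E}(s_0)\bigr)(PA)$, whereas you supply the missing derivation (absorbing the second-derivative correction of $\psi^{-1}$ via the central-configuration equation and the constraint $\langle M\psi^{-1},\psi^{-1}\rangle\equiv 1$) and give a cleaner interpretation of the final step as passage to an $M$-orthonormal basis of $T_{s_0}\E$.
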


\begin{proof}
By \eqref{eq: Mhat} and the fact $\U_{xx}(x_0)=\left(\frac{\partial \psi^{-1}}{\partial x} \right)^T D^2 U|_{\mathcal{E}}(s_0)\left(\frac{\partial \psi^{-1}}{\partial x} \right)$,
\begin{align*}
A^T \U_{xx}(x_0)A  & = A^{-1}\Mh_0^{-1} \U_{xx}(x_0) A \\
					& = A^{-1}\left(\frac{\partial \psi^{-1}}{\partial x} \right)^{-1}M^{-1}\left(\frac{\partial \psi^{-1}}{\partial x} \right)^{-T} \U_{xx}(x_0)A \\
					 & = A^{-1} \left(\frac{\partial \psi^{-1}}{\partial x} \right)^{-1} M^{-1}D^2U|_{\E}(s_0)\left(\frac{\partial \psi^{-1}}{\partial x} \right) A.
\end{align*}
 \end{proof}

\begin{lem}
\label{lem: symplectic similarity} Given a matrix $ P= \left( \begin{array}{cc} I & Q \\ Q^T & Q^T Q -R
\end{array} \right).$
If $Q$ is symmetric, $JP$ is symplectic similar to $J\hat{P}$ with $ \hat{P} = \left( \begin{array}{cc} I & 0 \\ 0 & -R
\end{array}\right).$
\end{lem}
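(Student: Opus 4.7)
The statement is essentially a completion-of-squares identity dressed in symplectic clothing, so the plan is to exhibit an explicit symplectic conjugator and verify its properties by direct computation.

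The plan is to take the shear matrix
\[
S = \begin{pmatrix} I & -Q \\ 0 & I \end{pmatrix}
\]
and show that (i) $S$ is symplectic, using the symmetry of $Q$, and (ii) $S^T P S = \hat{P}$. Once these are in hand, symplectic similarity of $JP$ and $J\hat{P}$ follows from a one-line manipulation.

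First I would verify $S^T J S = J$ by direct block multiplication; the lower-left block of the product equals $I$ and the lower-right block equals $Q^T - Q$, which vanishes precisely because $Q$ is symmetric. This is the only place where the hypothesis on $Q$ enters. Next I would compute $S^T P S$ step by step: multiplying $P$ on the right by $S$ zeroes out the upper-right block (as $I\cdot(-Q)+Q\cdot I = 0$) and replaces the lower-right block by $-R$ (since the cross-term $-Q^T Q$ cancels the $Q^T Q$ in $P$); then left multiplication by $S^T$ kills the remaining $Q^T$ in the lower-left block, leaving $\hat{P}$.

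Finally, to pass from the congruence $S^T P S = \hat{P}$ to the symplectic similarity $S^{-1}(JP)S = J\hat{P}$, I would use the identity $S^{-1} J S^{-T} = J$, which is just the inverse form of $S^T J S = J$ combined with $J^{-1} = -J$. Writing
\[
S^{-1}(JP)S = (S^{-1} J S^{-T})(S^T P S) = J\hat{P},
\]
finishes the proof.

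The argument has no real obstacle: everything reduces to two short block-matrix multiplications, and the symmetry of $Q$ is used exactly once (to make the shear symplectic). The only thing worth being careful about is the order of factors when converting the congruence of $P$ into the similarity of $JP$, which is handled by the small algebraic identity $S^{-1}JS^{-T}=J$ noted above.
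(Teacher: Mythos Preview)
Your proposal is correct and takes essentially the same approach as the paper: the paper also uses the shear $O=\begin{pmatrix} I & -Q \\ 0 & I \end{pmatrix}$, notes it is symplectic because $Q=Q^T$, checks $O^T P O = \hat P$ by direct computation, and then concludes $J\hat P = J O^T P O = O^{-1}(JP)O$. Your write-up is slightly more explicit about the block computations and about the identity $S^{-1}JS^{-T}=J$ used in the last step, but the argument is the same.
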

\begin{proof}
Since $Q= Q^T$, $O= \left( \begin{array}
{cc} I & -Q \\ 0 & I
\end{array} \right)$ is symplectic. A direct computation shows $O^T P O= \hat{P}$.
As a result,
$$ J\hat{P}= JO^T PO= O^{-1}(JP) O. $$
\end{proof}

\begin{prop}
\label{prop: hyp type I}
If $q(T^{\pm})$ is a total collision or a parabolic infinity,  $J\hat{B}(\tau^*)$ is hyperbolic if and only if $s^*$ satisfies the strict non-spiral condition.
\end{prop}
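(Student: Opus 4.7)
The plan is to exploit the block structure $\hat{B}(\tau^*) = \hat{B}_1(\tau^*)\diamond\hat{B}_2(\tau^*)$ given by \eqref{eq: B hat 1 2 type I}. Since a symplectic sum preserves Hamiltonian spectra (the eigenvalues of $J\hat{B}(\tau^*)$ are the union of the eigenvalues of $J_1\hat{B}_1(\tau^*)$ and $J_2\hat{B}_2(\tau^*)$ taken in the appropriate dimensions), $J\hat{B}(\tau^*)$ is hyperbolic if and only if both summands are. So the proof reduces to analyzing the two pieces separately.

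For the first piece, a direct $2\times 2$ computation gives
\[
\det\bigl(J_1\hat{B}_1(\tau^*)-\mu I\bigr) = \mu^2 - \tfrac{9}{16}(v^*)^2 - 2\U(x^*).
\]
Using the asymptotic identity $(v^*)^2 = 2U(s^*) = 2\U(x^*)$ from \eqref{eq: tau x v star}, this simplifies to $\mu^2 = \tfrac{25}{8}\U(x^*)$, which is strictly positive. Hence $J_1\hat{B}_1(\tau^*)$ is always hyperbolic, independently of any condition on $s^*$.

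For the second piece, I would first normalize the upper-left block to the identity. Pick $A$ symplectic-friendly with $A^T \hat{M}_*^{-1} A = I$ (equivalently $\hat{M}_* = AA^T$) and apply the symplectic transformation $O = \mathrm{diag}(A, A^{-T})$. Then
\[
O^T \hat{B}_2(\tau^*) O = \begin{pmatrix} I & \tfrac{v^*}{4}I \\ \tfrac{v^*}{4}I & -N \end{pmatrix},\quad N := A^{-1}\U_{xx}(x^*)A^{-T}.
\]
Setting $A' := A^{-T}$ gives $A'^T \hat{M}_* A' = I$, so by Lemma \ref{lem: eigenvalue U hat} the matrix $A'^T\U_{xx}(x^*)A' = N$ has eigenvalues $\lambda_1(s^*),\ldots,\lambda_{n^*-1}(s^*)$. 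Now Lemma \ref{lem: symplectic similarity}, with $Q = \tfrac{v^*}{4}I$ (symmetric) and $R = N + \tfrac{(v^*)^2}{16}I$, yields symplectic similarity with $\mathrm{diag}(I,-R)$. The Hamiltonian matrix of the resulting block is
\[
\begin{pmatrix} 0 & R \\ I & 0 \end{pmatrix},
\]
whose eigenvalues $\mu$ satisfy $\mu^2 = \lambda_i(s^*) + \tfrac{(v^*)^2}{16} = \lambda_i(s^*) + \tfrac{U(s^*)}{8}$ for $i=1,\ldots,n^*-1$, again using $(v^*)^2 = 2U(s^*)$.

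Combining the two pieces, $J\hat{B}(\tau^*)$ has no eigenvalue on the imaginary axis if and only if $\lambda_i(s^*) + U(s^*)/8 > 0$ for every $i$, i.e., $\lambda_1(s^*) > -U(s^*)/8$, which is exactly the strict non-spiral condition of Definition \ref{def: BS condition}. The mildly nontrivial step is setting up the symplectic change of variables so that the eigenvalue information extracted from $\hat{B}_2$ matches the normalization used in Lemma \ref{lem: eigenvalue U hat}; everything else is either algebraic or a direct application of the two preparatory lemmas.
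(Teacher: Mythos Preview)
Your proof is correct and follows essentially the same route as the paper: split via the symplectic sum $\hat{B}(\tau^*)=\hat{B}_1(\tau^*)\diamond\hat{B}_2(\tau^*)$, check the $2\times 2$ block directly, then normalize the upper-left block of $\hat{B}_2(\tau^*)$ to the identity by a block-diagonal symplectic matrix, apply Lemma~\ref{lem: symplectic similarity} to strip the off-diagonal, and read off the eigenvalues via Lemma~\ref{lem: eigenvalue U hat}. The only cosmetic difference is the choice of normalizer: the paper takes $A$ with $A^T\hat{M}_*A=I$ and uses $A_d=\mathrm{diag}(A^T,A^{-1})$, whereas you take $A$ with $A^T\hat{M}_*^{-1}A=I$ and $O=\mathrm{diag}(A,A^{-T})$; your $A'=A^{-T}$ is exactly the paper's $A$, so the two computations coincide.
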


\begin{proof} Since $J\bh(\tau^*)=J\bh_1(\tau^*) \diamond J\bh_2(\tau^*)$, it is hyperbolic if and only if $J\bh_1(\tau^*)$ and $J\bh_2(\tau^*)$ are hyperbolic. A direct computation shows the eigenvalues of $J \bh_1(\tau^*)$ are $\pm \frac{5}{2\sqrt{2}}\sqrt{\U(x^*)}$.

For $J\bh_2(\tau^*)$, we can always find an invertible matrix $A$ satisfying $A^T \Mh_* A= I$. Meanwhile $A_d=\text{diag}(A^T, A^{-1})$ is a symplectic matrix. Therefore
$$ A_d (J\bh_2(\tau^*))A_d^{-1}= J A_d^{-T} \bh_2(\tau^*)A_d^{-1} =J \Phi_{A_d}(\bh_2(\tau^*)), $$
where $\Phi_{A_d}(\bh_2(\tau^*)$ is defined as in \eqref{eq: phi R B}. This means $J\bh_2(\tau^*)$ is hyperbolic if and only if $J \Phi_{A_d}(\bh_2(\tau^*))$ is.

Now notice that
$$ \Phi_{A_d}(\bh_2(\tau^*)) = \left( \begin{array}
{cc} I & \frac{1}{4}v^* I \\ \frac{1}{4}v^* I & -A^T \U_{xx}(x^*)A
\end{array} \right).
$$
Let $P= \Phi_{A_d}(\bh_2(\tau^*))$ and $O=\left( \begin{array}
{cc} I & -\frac{1}{4}v^*I \\ 0 & I
\end{array} \right)$. By Lemma \ref{lem: symplectic similarity},
$$ O^{-1} J \Phi_{A_d}(\bh_2(\tau^*)) O=J \hat{P}, \text{ where } \hat{P}=\left( \begin{array}
{cc} I & 0 \\ 0 & - \big( \frac{(v^*)^2}{16}I +A^T \U_{xx}(x^*)A \big)
\end{array} \right).$$
Recall that $v^*= \pm \sqrt{2 \U(x^*)}= \pm \sqrt{2U(s^*)}$, by a simple computation,
$$ (J\hat{P})^2 = \left( \begin{array}
{cc} \frac{U(s^*)}{8}I +A^T \U_{xx}(x^*)A & 0 \\ 0 & \frac{U(s^*)}{8}I +A^T \U_{xx}(x^*)A
\end{array} \right).$$
By Lemma \ref{lem: eigenvalue U hat}, the eigenvalues of $J\hat{P}$ are
$$ \pm \sqrt{\frac{U(s^{*})}{8}+\lmd_i(s^*)}, \;\; i=1, \dots, n^*-1. $$
This implies the desired result.

\end{proof}

\subsection{Hyperbolic McGehee coordinates.} To deal with the hyperbolic infinity,  we introduce the following coordinates
\begin{equation}
\label{eq: v u McGehee 2}  v= p_1=\dot{r},\;\; u=r^{-1}p_2 =r\hat{M}\dot{x}.
\end{equation}
Then equation \eqref{eq: Hamiltonian equation} becomes
\begin{equation}
 \label{eq: hyp McGehee 1}
 \begin{cases}
 \dot{v} &= r^{-1}\big(\langle\hat{M}^{-1}u,u\rangle-r^{-1}\U(x)\big), \\
 \dot{u} &= r^{-1} \big(-uv+r^{-1}\U_x(x)-\frac{1}{2}\langle(\hat{M}^{-1})_xu,u\rangle \big), \\
 \dot{r} &= v, \\
 \dot{x} &= r^{-1} \hat{M}^{-1}u.
 \end{cases}
 \end{equation}
Like the previous subsection, we change the time parameter from $t$ to $\tau$, but this time with $dt = r \, d\tau$
\begin{equation}
\begin{cases}
v'&=\langle\hat{M}^{-1}u,u\rangle-r^{-1}\U(x), \\
u'&=-uv+r^{-1}\U_x(x)-\frac{1}{2}\langle(\hat{M}^{-1})_xu,u\rangle,\\
r'&=rv,\\
x'&=\hat{M}^{-1}u.
\end{cases} \label{eq: McGehee r1}
\end{equation}

The energy identity now becomes
\begin{equation}
\frac{1}{2}(\langle\hat{M}^{-1}u,u\rangle+v^2)-r^{-1}\U(x)=H_0. \label{eq: energy M2}
\end{equation}

\begin{lem}\label{lem:masy2}
If $q(T^{\pm})$ is a hyperbolic infinity, 
\begin{enumerate}
\item[(a).] $\tau=\tau(t) \to \pm \infty$, as $t \to T^{\pm}$;
\item[(b).] $(|v|,u)(\tau) \to (\sqrt{2H_0}, 0)$, as $\tau \to \pm \infty$.
\end{enumerate}
\end{lem}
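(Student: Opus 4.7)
The lemma is the hyperbolic analogue of Lemma \ref{lem:masy1}, and the plan is to mirror that proof but invoke part (b) of Lemma \ref{lem: asy1} instead of the Sundman--Sperling estimates. I will focus on the case $T^+ = +\infty$, as the case $T^- = -\infty$ is identical up to obvious sign changes. Throughout, I use that in the hyperbolic case the normalized limit $s^{\pm} \in \E \setminus \Delta$ by Chazy's theorem, so $\hat{M}_{\pm} = \hat{M}(x^{\pm})$ is a well-defined positive definite matrix.

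For part (a), recall that the new time variable is defined by $dt = r\, d\tau$, so for any fixed $t_0$,
\begin{equation*}
\tau(t) - \tau(t_0) = \int_{t_0}^{t} \frac{d\sigma}{r(\sigma)}.
\end{equation*}
By Lemma \ref{lem: asy1}(b), $r(t)/t \to \sqrt{2H_0}$ as $t \to +\infty$, so $r^{-1}(\sigma) \sim (\sqrt{2H_0}\,\sigma)^{-1}$. The resulting integral diverges logarithmically, which gives $\tau(t) \to +\infty$.

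For part (b), I read off directly from the definitions \eqref{eq: v u McGehee 2}. First, $v = \dot{r}$, so Lemma \ref{lem: asy1}(b) yields $|v(\tau)| = |\dot{r}(t)| \to \sqrt{2H_0}$. Next, $u = r \hat{M}\dot{x}$, and by the definition \eqref{eq: Mhat} of $\hat{M}$ one has the isometry $|\dot{x}|^2_{\hat{M}} = |\dot{s}|^2_M$ between the chart velocity and the velocity on $\E$. Consequently,
\begin{equation*}
\langle \hat{M}^{-1} u, u \rangle \;=\; r^2 \langle \hat{M}\dot{x}, \dot{x}\rangle \;=\; r^2 |\dot{s}|^2_M \;=\; \bigl( r(t) |\dot{s}(t)|_M \bigr)^2,
\end{equation*}
which tends to $0$ by the third identity in Lemma \ref{lem: asy1}(b). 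Since $\hat{M}^{-1}$ is continuous at $x^{+}$ with positive definite limit $\hat{M}_+^{-1}$, this forces $u(\tau) \to 0$.

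The argument is essentially a direct translation, with no serious analytic difficulty. The one point that requires mentioning (and which distinguishes the hyperbolic setting from the total-collision/parabolic setting of Lemma \ref{lem:masy1}) is that the limiting configuration $s^+$ is collision-free; this is what guarantees that the coordinate chart $\psi$ and the tensor $\hat{M}$ remain smooth and positive definite in a neighbourhood of $x^+$, so that conclusions about $\langle \hat{M}^{-1}u, u\rangle$ transfer to conclusions about $|u|$ itself.
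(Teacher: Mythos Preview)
Your proof is correct and follows essentially the same approach as the paper: both parts reduce to reading off the definitions $d\tau/dt = r^{-1}$, $v=\dot r$, $u=r\hat M\dot x$ and invoking Lemma \ref{lem: asy1}(b). If anything, your treatment of $u\to 0$ via $\langle \hat M^{-1}u,u\rangle = r^2|\dot s|_M^2$ together with the positive-definiteness of $\hat M_+$ is slightly more careful than the paper's corresponding step.
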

\begin{proof}
(a). Since $q(T^{\pm})$ is a hyperbolic infinity,  the energy constant $H_0 >0$. By Lemma \ref{lem: asy1},
$$ \frac{d \tau}{dt} = r^{-1}(t) \to  \frac{|t|}{\sqrt{2H_0}}, \; \text{ as } t \to T^{\pm}. $$
Combining this with the fact that $T^{\pm}= \pm \infty$ (see Definition \ref{dfn: singular solution}), we get the desired property.

(b). By \eqref{eq: v u McGehee 2} and Lemma \ref{lem: asy1}, we have
$$ \lim_{\tau \to \pm \infty} |v(\tau)|= \lim_{t \to T^{\pm}} |\dot{r}(t)|= \sqrt{2H_0};$$
$$ \lim_{\tau \to \pm \infty} |u(\tau)|^2= \lim_{t \to T^{\pm}} r^2(t) \lan \hat{M} \dot{x}, \hat{M} \dot{x} \ran = \lim_{t \to T^{\pm}} r^2(t) |\dot{s}(t)|^2_M = 0. $$
\end{proof}

Like in the previous subsection, we change the time parameter in the linear system \eqref{eq: linearied} from $t$ to $\tau$. Now as $dt= r \,d\tau$, we get
\begin{equation}
\label{eq: linear McGehee II}   \xi'(\tau)=JB(\tau) \xi(\tau):=r(\tau)J\nabla^2 H(\zt(\tau))\xi(\tau).
\end{equation}
For the rest of this subsection, set $\hat{B}(\tau)=\Phi_R(B)$ with $\Phi_R(B)$ given by \eqref{eq: phi R B} and
\begin{equation} \label{eq: R II}
R(\tau) =\text{diag} (r^{\frac{1}{2}},r^{-\frac{1}{2}}I,r^{-\frac{1}{2}},r^{\frac{1}{2}}I)(\tau).
\end{equation}
By Lemma \ref{lem: phi R B},
\begin{equation} \label{eq: hat B tau 2} \begin{split}
&\hat{B}(\tau)= \\
&\left(
  \begin{array}{cccc}
    1 &   0 & -\frac{1}{2}v &   0\\
    0   & \hat{M}^{-1} &   -2 \hat{M}^{-1}u & D_x(\hat{M}^{-1}u)+\frac{1}{2}vI\\
    -\frac{1}{2}v &   -2 u^T\hat{M}^{-1}&  3\langle \hat{M}^{-1}u,u\rangle-\frac{2\U(x)}{r} &  \frac{D_x\U(x)^T}{r}- D_x \langle \hat{M}^{-1}u,u \rangle \\
    0   & \nabla_x(\hat{M}^{-1}u)+\frac{1}{2}vI &  \frac{\nabla_x \U(x)}{r}- \nabla_x \langle \hat{M}^{-1}u,u\rangle & \frac{1}{2}\langle \hat{M}^{-1}u,u\rangle_{xx}-\frac{\U_{xx}(x)}{r}  \\
  \end{array}
\right).
\end{split}
\end{equation}

Make the same assumption in notations as in \eqref{eq: tau x v star} for the rest of this subsection.
\begin{lem}
\label{lem: hat B McGehee II} If $q(T^{\pm})$ is a hyperbolic infinity,
\begin{equation} \label{eq: hat B tau11} \hat{B}(\tau^*)= \lim_{\tau \to \tau^*} \hat{B} (\tau) =\left(
  \begin{array}{cccc}
    1 &   0 & -\frac{1}{2}v^* &   0\\
    0   & \hat{M}_*^{-1} &   0& \frac{1}{2}v^*I\\
    -\frac{1}{2}v^* &   0&  0 & 0 \\
    0   &\frac{1}{2}v^*I &  0 &0  \\
  \end{array}
\right).
\end{equation}
\end{lem}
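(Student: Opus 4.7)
The plan is to mimic the proof of Lemma \ref{lem: hat B McGehee I}: the result is a direct entry-by-entry limit in \eqref{eq: hat B tau 2}, once the asymptotic behavior of each quantity appearing in $\hat{B}(\tau)$ is understood. The four pieces of information we need are:
\begin{equation*}
\lim_{\tau \to \tau^*} v(\tau) = v^*, \qquad \lim_{\tau \to \tau^*} u(\tau) = 0, \qquad \lim_{\tau \to \tau^*} x(\tau) = x^*, \qquad \lim_{\tau \to \tau^*} r(\tau) = +\infty.
\end{equation*}
The first two are exactly Lemma \ref{lem:masy2}(b). The third follows from \eqref{eq: lim x}, combined with the fact that in the hyperbolic case $s^{\pm} \in \E \setminus \Delta$ (and hence $x^* = \psi(s^*)$ is well defined in the chart). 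The fourth follows from Lemma \ref{lem: asy1}(b), which gives $r(t) \sim \sqrt{2H_0}\,|t| \to +\infty$ as $t \to T^{\pm} = \pm\infty$.

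Next, I would substitute these limits into the entries of $\hat{B}(\tau)$ in \eqref{eq: hat B tau 2}. Because $x(\tau) \to x^*$, continuity of $\hat{M}$, $\U$, their $x$-derivatives and the quadratic form $\lan \hat{M}^{-1}u, u\ran$ in $(u,x)$ on a neighborhood of $(0, x^*)$ yields
\begin{equation*}
\hat{M}^{-1}(x(\tau)) \to \hat{M}_*^{-1}, \qquad \hat{M}^{-1}(x(\tau))\,u(\tau) \to 0, \qquad D_x(\hat{M}^{-1}u)(\tau) \to 0,
\end{equation*}
and similarly every term of the form $\lan \hat{M}^{-1}u, u\ran$, $D_x\lan \hat{M}^{-1}u, u \ran$, $\lan \hat{M}^{-1}u, u \ran_{xx}$ tends to $0$ since $u \to 0$. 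Meanwhile every term containing a factor $1/r$, namely $\U(x)/r$, $D_x\U(x)/r$ and $\U_{xx}(x)/r$, tends to $0$ because $\U(x(\tau)) \to \U(x^*) = U(s^*)$ is finite (here we use $s^* \in \E\setminus\Delta$) while $r(\tau) \to +\infty$. The remaining entries $1$, $-\tfrac{1}{2}v$ and $\tfrac{1}{2}vI$ converge to $1$, $-\tfrac{1}{2}v^*$ and $\tfrac{1}{2}v^*I$ respectively. Assembling these entrywise limits produces exactly the matrix in \eqref{eq: hat B tau11}.

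There is no genuine obstacle here; the only mild point is to confirm that the relevant $x$-derivatives of $\hat{M}^{-1}$, $\U$ and the quadratic form are bounded near the limit point $(u, x) = (0, x^*)$, so that, combined with $u \to 0$ or $r \to +\infty$, the corresponding entries actually vanish in the limit. This is immediate from smoothness of $\hat{M}$ and $\U$ on a neighborhood of $x^*$ in $\psi(\Omega)$, which is guaranteed by $s^* \in \E\setminus\Delta$.
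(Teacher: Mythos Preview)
Your proposal is correct and follows essentially the same approach as the paper: the paper's proof simply notes that $r(\tau)\to+\infty$ and, via Lemma \ref{lem: asy1} (equivalently Lemma \ref{lem:masy2}(b)), that $(v,u)(\tau)\to(v^*,0)$, and then reads off the matrix limit from \eqref{eq: hat B tau 2}. Your version spells out the entrywise verification more carefully, including the finiteness of $\U(x^*)$ from $s^*\in\E\setminus\Delta$, but the argument is the same.
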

\begin{proof}
Since $r(\tau) \to +\infty$, when $\tau \to \pm \infty$. The desired result follows from Lemma \ref{lem: asy1}, as it implies $\lim_{\tau \to \tau^*} (v, u)(\tau)= (v^*, 0)$.
\end{proof}
As a result, we may write $\bh(\tau^*) = \bh_1(\tau^*) \diamond \bh_2(\tau^*)$ with
\begin{equation}
\label{eq: bh 1 2 type II} \bh_1(\tau^*)= \left( \begin{array}{cc}
1 &  -\ey v^* \\ -\ey v^* & 0
\end{array} \right), \;\; \bh_2(\tau^*) = \left( \begin{array}{cc}
\Mh_*^{-1} & \ey v^* I \\ \ey v^* I & 0
\end{array}\right).
\end{equation}

By a similar argument as in the proof of Proposition \ref{prop: hyp type I}, we find
\begin{prop}
\label{prop bh hyperbolic type II} If $q(T^{\pm})$ is a hyperbolic infinity, $J\bh(\tau^*)$ is always hyperbolic.
\end{prop}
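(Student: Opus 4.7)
The strategy is to mirror the argument used for Proposition \ref{prop: hyp type I}. Since $J\hat{B}(\tau^*) = J\hat{B}_1(\tau^*) \diamond J\hat{B}_2(\tau^*)$, it suffices to show each factor is hyperbolic. The key observation throughout is that in the hyperbolic infinity case $H_0>0$, so $v^* = \pm\sqrt{2H_0} \ne 0$, and this nonvanishing is exactly what forces hyperbolicity.

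For the $2\times 2$ block $J\hat{B}_1(\tau^*)$, I would simply compute
\[
J\hat{B}_1(\tau^*) = \begin{pmatrix} 0 & -1 \\ 1 & 0 \end{pmatrix}\begin{pmatrix} 1 & -\tfrac{v^*}{2} \\ -\tfrac{v^*}{2} & 0 \end{pmatrix} = \begin{pmatrix} \tfrac{v^*}{2} & 0 \\ 1 & -\tfrac{v^*}{2} \end{pmatrix},
\]
whose eigenvalues are $\pm v^*/2$, hence nonzero real.

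For $J\hat{B}_2(\tau^*)$, I would repeat the symplectic similarity from the earlier proposition. Choose $A$ with $A^T\hat{M}_* A = I$ and set $A_d = \mathrm{diag}(A^T, A^{-1})$, which is symplectic. Then
\[
\Phi_{A_d}(\hat{B}_2(\tau^*)) = \begin{pmatrix} I & \tfrac{v^*}{2}I \\ \tfrac{v^*}{2}I & 0 \end{pmatrix}.
\]
Applying Lemma \ref{lem: symplectic similarity} with $Q = \tfrac{v^*}{2}I$ (symmetric) and $R = Q^TQ = \tfrac{(v^*)^2}{4}I$, the matrix $J\Phi_{A_d}(\hat{B}_2(\tau^*))$ is symplectically similar to $J\hat{P}$ with $\hat{P} = \mathrm{diag}(I, -\tfrac{(v^*)^2}{4}I)$. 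A direct computation yields
\[
(J\hat{P})^2 = \frac{(v^*)^2}{4}\,I_{2(n^*-1)},
\]
so the eigenvalues of $J\hat{B}_2(\tau^*)$ are $\pm v^*/2$, each with multiplicity $n^*-1$, all nonzero.

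There is essentially no obstacle here: the argument is structurally identical to Proposition \ref{prop: hyp type I}, but strictly easier because the lower-right block of $\hat{B}_2(\tau^*)$ vanishes (there is no $\U_{xx}$ term, since the potential $\U(x)/r$ vanishes at the hyperbolic infinity). Consequently the eigenvalues do not depend on the configuration $s^*$ at all, only on $v^* \ne 0$, which explains why the strict non-spiral condition is not needed in the hyperbolic case.
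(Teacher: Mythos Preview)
Your proof is correct and follows exactly the approach the paper intends: the paper's own proof simply reads ``By a similar argument as in the proof of Proposition \ref{prop: hyp type I},'' and you have accurately filled in those details, including the correct observation that the vanishing lower-right block makes the eigenvalues $\pm v^*/2$ independent of $s^*$.
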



\section{ A Morse index theory for doubly asymptotic solutions} \label{sec: index th}

A proof of Theorem \ref{thm 1 singular} will be given in this section. Since the Morse index of a solution is an integer associated to an unbounded Fredholm operator in an infinite dimension space, it is difficult to compute it directly. Meanwhile for a Hamiltonian system it is known the Morse index is related with the Maslov index, which is an intersection index between two paths of Lagrangian subspaces. In the following we give a brief introduction of the Maslov index that will be needed in our proofs.

Given a $2k$-dimension symplectic space $(\mathbb{R}^{2k},\omega)$, where $\om$ represents the standard symplectic form. Let $Lag(2k)$ represents the \emph{Lagrangian Grassmannian}, i.e. the space of all Lagrangian subspaces in $(\mathbb{R}^{2k},\omega)$. For any two continuous paths $L_1(t),L_2(t)$, $t\in[a,b]$, in $Lag(2k)$, the Maslov index $\mu(L_1,L_2; [a, b])$ is an integer invariant, uniquely determined by a set of axioms listed as Property I to VII below (for the details see \cite{CLM}).

\textbf{Property I. (Reparametrization invariance)}  Let $\vr:[c,d]\rightarrow [a,b]$ be a continuous and piecewise smooth function satisfying $\vr(c)=a$, $\vr(d)=b$, then \begin{equation}
\mu(L_1(t), L_2(t))=\mu(L_1(\vr(\tau)), L_2(\vr(\tau))). \label{adp1.1}
\end{equation}

\textbf{Property II. (Homotopy invariant with end points)} If two continuous
families of Lagrangian paths $L_1(s,t)$, $L_2(s,t)$, $0\leq s\leq 1$, $a\leq
t\leq b$ satisfy $\text{dim}(L_1(s,a)\cap L_2(s,a))=C_1, \text{dim}(L_1(s,b)\cap L_2(s,b)) = C_2,$, for any $0\leq s\leq 1$, where $C_1, C_2$ are two constant integers, then
\begin{equation}
\mu(L_1(0,t), L_2(0,t))=\mu(L_1(1,t),L_2(1,t)). \label{adp1.2}
\end{equation}

\textbf{Property III. (Path additivity)}  If $a<c<b$, then
\begin{equation}
\mu(L_1(t),L_2(t))=\mu(L_1(t),L_2(t); [a,c])+\mu(L_1(t),L_2(t); [c,b]).
\label{adp1.3}
\end{equation}

\textbf{Property IV. (Symplectic invariance)} Let $\ga(t)$, $t\in[a,b]$ be a
continuous path of symplectic matrices in $\Sp(2n)$, then
\begin{equation}
\mu(L_1(t),L_2(t))=\mu(\ga(t)L_1(t), \ga(t)L_2(t)). \label{adp1.4}
\end{equation}

\textbf{Property V. (Symplectic additivity)} Let $W_i$, $i=1,2$, be two symplectic spaces, if $L_i \in C([a,b], Lag(W_1))$ and $\hat{L}_i \in C([a,b], Lag(W_2))$, $i =1, 2$, then
\begin{equation}
\mu(L_1(t)\oplus \hat{L}_1(t),L_2(t)\oplus \hat{L}_2(t))= \mu(L_1(t),L_2(t))+
\mu(\hat{L}_1(t),\hat{L}_2(t)).
\label{adp1.5add}
\end{equation}

\textbf{Property VI. (Symmetry)} If $L_i \in C([a,b], \text{Lag}(2n))$, $i=1,2$, then
\begin{equation}  \mu(L_1(t), L_2(t))= \dim L_1(a)\cap L_2(a)-\dim L_1(b)\cap L_2(b)  -\mu(L_2(t),L_1(t)).    \label{adp1.5sym}
\end{equation}

In the case $L_1(t)\equiv V_0$,  $L(t)=\ga(t)V$, where $\ga$ is a path of  symplectic matrix we have
a monotonicity property (cfr. \cite{HOW15}).

\textbf{Property VII (Monotone property)}   Suppose for $j=1,2$,   $L_j(t)=\ga_j(t)V$,
where  $\dot{\ga}_j(t)=JB_j(t)\ga_j(t)$ with $\ga_j(t)=I_{2n}$.
If $B_1(t)\geq B_2(t)$ in the sense that $B_1(t)-B_2(t)$ is non-negative matrix,
then for any $V_0,V_1\in Lag(2n)$, we have
\bea  \mu(V_0, \ga_1V_1)\geq  \mu(V_0, \ga_2V_1).    \lb{adp1.5monotone}       \eea

An efficient way to study the Maslov
index is via crossing form introduced in \cite{RS1}.
 For simplicity and since it is  enough for our purpose, we only
review the case of the Maslov index for a path of Lagrangian subspace
with respect to a fixed Lagrangian subspaces. Let $\Lambda(t)$ be a
$C^1$-curve of Lagrangian subspaces with $\Lambda(0)=\Lambda$, and
let $V$ be a fixed Lagrangian subspace which is transversal to
$\Lambda$. For $v\in \Lambda$ and small $t$, define $w(t)\in V$ by
$v+w(t)\in \Lambda(t)$. Then the form \bea
Q(v)=\left.\frac{d}{dt}\right|_{t=0}\omega(v,w(t)) \lb{1.3a} \eea is
independent of the choice of $V$ (cfr.\cite{RS1}).  A crossing for
$\Lambda(t)$ is some $t$ for which $\Lambda(t)$ intersects $W$
nontrivially.  At each crossing, the crossing form is
defined to be \bea \Gamma(\Lambda(t),W,t)=Q|_{\Lambda(t)\cap W}.
\lb{1.3b} \eea A crossing is called {\it regular} if the crossing
form is non-degenerate. If the path is given by
$\Lambda(t)=\gamma(t)\Lambda$ with $\gamma(t)\in \Sp(2n)$ and
$\Lambda\in Lag(2n)$, then the crossing form is
equal to $(-\gamma(t)^TJ\dot{\gamma}(t)v,v)$, for $v\in
\gamma(t)^{-1}(\Lambda(t)\cap W)$, where $(\,,\,)$ is the standard
inner product on $\mathbb{R}^{2n}$.

For $\Lambda(t)$  and $W$ as above if the path has only regular
crossings, following \cite{LZ}, the Maslov index is equal to
$$ \mu(W,\Lambda(t))=m^+(\Gamma(\Lambda(a),W,a))+\sum_{a<t<b}  \text{sign}
(\Gamma(\Lambda(t),W,t))-m^-(\Gamma(\Lambda(b),W,b)),$$
where the sum runs  all over the  crossings $t\in(a,b)$ and $m^+,
m^-$ are the dimensions of  positive and negative definite
subspaces, $sign=m^+-m^-$ is the signature. We note that for a
$C^1$-path $\Lambda(t)$ with fixed end points, and it can be made to only
have regular crossings by a small perturbation.

When the Hamiltonian system is given by the Legendre transformation of a Sturm-Liouville system, since all the crossings are positive, we have
 \begin{equation*}
\mu(\vd,\Lambda(t))=\text{dim}(\Lambda(a)\cap \vd)+\sum_{a<t<b} \text{dim}(\Lambda(t)\cap \vd).
\end{equation*}
For more details see   \cite{RS1}, \cite{HO}.


Given a Lagrangian path $t \mapsto\Lambda(t)$, the difference of the Maslov indices of it with respect to two Lagrangian subspaces $V_0,V_1 \in \text{Lag}(2n)$, is
given in terms of the \emph{H\"{o}rmander index} (see \cite[Theorem 3.5]{RS1})
\begin{equation}
s(V_0, V_1; \Lambda(0), \Lambda(1)) = \mu(V_0, \Lambda(t))-\mu(V_1,\Lambda(t)).
\end{equation}

The H\"{o}rmander index is independent of the choice of the path connecting $\Lambda(0)$ and $\Lambda(1)$, and it satisfies
\begin{equation}
|s(V_0,V_1; \Lambda(0),\Lambda(1))|\leq 2n.  \label{hormd}
\end{equation}

Let $q \in C^2((T^-, T^+), \hat{\X})$ be a doubly asymptotic solution. After the above explanation, we are ready to study the index of $q(t)$.  First the linearized Hamiltonian system along such a solution was given in \eqref{eq: linearied}.
We denote its the fundamental solution as $\ga(t,t_1)$, i.e.
\[\dot{\ga}(t,t_1)=JB(t)\ga(t,t_1),\quad \ga(t_1,t_1)=I. \]

Recall that for any $[t^-, t^+] \subset (T^-, T^+)$, $m^-(q; t^-, t^+)$ represents the Morse index of $q|_{[t^-, t^+]}$. By the \emph{Morse index theorem} (see \cite{HS09}),
\begin{equation}
m^-(q; t^-, t^+)+n^*=\mu(\vd, \gamma(t,t^-) \vd; [t^-,t^+]),\label{Mor-Mas}
\end{equation}
where the right hand side is the Maslov index of the two paths $\vd$ and $\gamma(t,t^-)\vd$, $t \in [t^-, t^+]$.

In order to compute the limit of the above Morse index as $t^{\pm}$ converge to $T^{\pm}$, the results obtained in the previous section using (hyperbolic) McGehee coordinates will be crucial. Recall that the change of coordinates transfers  \eqref{eq: linearied} to the following system
\begin{equation} \label{eq: linear Ham tau}
\eta'(\tau) = J \bh(\tau) \eta(\tau), \; \text{ where } \; \bh(\tau) = \Phi_R(B)(\tau).
\end{equation}
Recall that $\Phi_R(B)(\tau)$ is defined in \eqref{eq: phi R B} with $R(\tau)$ given by \eqref{eq: R I} in McGehee coordinates and by \eqref{eq: R II} in hyperbolic McGehee coordinates.
The fundamental solution of \eqref{eq: linear Ham tau} will be denoted as $\hat{\gm}(\tau, \tau_1)$.

Next lemma shows the Maslov index is invariant under the change of the (hyperbolic) McGehee coordinates.
\begin{lem} \label{lem: Maslov index t tau} For any $T^- < t_1 < t_2 < T^+$, in (hyperbolic) McGehee coordinates,
$$ \mu(\vd, \hat{\gamma}(\tau,\tau_1)\vd; [\tau_1,\tau_2]) =\mu(\vd, \gm(t, t_1)\vd; [t_1, t_2]), $$
where $\tau_i=\tau(t_i)$, $i=1,2$.
\end{lem}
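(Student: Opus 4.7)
The plan is to establish the invariance by combining the reparametrization invariance (Property I) and symplectic invariance (Property IV) of the Maslov index, with the key structural observation that the symplectic transformation $R(\tau)$ defining the (hyperbolic) McGehee coordinates preserves the Dirichlet Lagrangian subspace $\vd$.

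First I would track how the fundamental solution transforms under the two steps of the coordinate change. The change of time parameter from $t$ to $\tau$ (via $dt = r^{3/2}d\tau$ or $dt = r\,d\tau$) is a pure reparametrization, so the intermediate fundamental solution $\tilde\gm(\tau,\tau_1)$ of $\xi'(\tau) = JB(\tau)\xi(\tau)$ (see \eqref{eq: linearied tau}) satisfies $\tilde\gm(\tau,\tau_1) = \gm(t(\tau),t_1)$. The subsequent substitution $\eta(\tau) = R(\tau)\xi(\tau)$, together with Lemma \ref{lem: phi R B}, then yields
\begin{equation*}
\hat\gm(\tau,\tau_1) = R(\tau)\,\gm(t(\tau),t_1)\,R(\tau_1)^{-1}.
\end{equation*}

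Next I would verify the two structural properties of $R(\tau)$ that make the argument work. Both forms of $R(\tau)$, namely \eqref{eq: R I} and \eqref{eq: R II}, are block diagonal with paired diagonal entries that are mutual reciprocals in the momentum/position split $(p_1,p_2)\leftrightarrow(r,x)$; a direct check gives $R(\tau)^TJR(\tau)=J$, so $R(\tau)$ is a smooth path of symplectic matrices. More importantly, since $R(\tau)$ leaves the momentum subspace $\vd = \{(p_1,p_2,0,0)\}$ invariant entry-wise, one has $R(\tau)\vd = \vd$ for every $\tau$.

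Finally I would assemble the chain of equalities. Applying Property I to the reparametrization $t \leftrightarrow \tau$ yields
\begin{equation*}
\mu(\vd,\gm(t,t_1)\vd;[t_1,t_2]) = \mu(\vd,\gm(t(\tau),t_1)\vd;[\tau_1,\tau_2]).
\end{equation*}
Applying Property IV with the symplectic path $R(\tau)$ on $[\tau_1,\tau_2]$ gives
\begin{equation*}
\mu(\vd,\gm(t(\tau),t_1)\vd;[\tau_1,\tau_2]) = \mu\bigl(R(\tau)\vd,\,R(\tau)\gm(t(\tau),t_1)\vd;[\tau_1,\tau_2]\bigr).
\end{equation*}
Since $R(\tau)\vd = \vd$ and $R(\tau)\gm(t(\tau),t_1) = \hat\gm(\tau,\tau_1)R(\tau_1)$ with $R(\tau_1)\vd = \vd$, the right-hand side simplifies to $\mu(\vd,\hat\gm(\tau,\tau_1)\vd;[\tau_1,\tau_2])$, completing the proof.

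There is no real obstacle here beyond these two bookkeeping checks. The only point requiring care is the invariance $R(\tau)\vd = \vd$, which is what ensures that the Lagrangian boundary condition is not disturbed by the rescaling and allows the Maslov axioms to apply directly; this is immediate from the block-diagonal form of $R(\tau)$ in the chosen coordinate ordering $(p_1,p_2,r,x)$.
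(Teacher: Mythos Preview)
Your proof is correct and follows essentially the same approach as the paper: both arguments combine the reparametrization invariance (Property I) with the symplectic invariance (Property IV), using the key observation that $R(\tau)\vd=\vd$ and the relation $\hat{\gamma}(\tau,\tau_1)=R(\tau)\gamma(t(\tau),t_1)R(\tau_1)^{-1}$. The only cosmetic difference is the order in which the two invariances are applied.
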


\begin{proof}
Recall that $\hat{\gm}(\tau,\tau_{1})=R(\tau)\gm(\tau,\tau_{1})R^{-1}(\tau_{1})$, where $R(\tau)$ is either \eqref{eq: R I} or \eqref{eq: R II}.
Then $R(\tau)\vd=\vd, R^{-1}(\tau)\vd=\vd$ and
\begin{align*}
\mu(\vd,\hat{\gm}(\tau,\tau_{1})\vd; [\tau_1, \tau_2]) & = \mu(\vd,R(\tau)\gm(\tau,\tau_{1})R^{-1}(\tau_{1})\vd; [\tau_1, \tau_2]) \\
 & =\mu(R^{-1}(\tau)\vd,\gm(\tau,\tau_{1}) R^{-1}(\tau_{1})\vd; [\tau_1, \tau_2]) \\
 & = \mu(\vd,\gm(\tau,\tau_{1})\vd; [\tau_1, \tau_2])
\end{align*}

Since the Maslov index is invariant under the change of time parameter, we have
$$ \mu(\vd, \gm(t, t_1)\vd; [t_1, t_2])=\mu(\vd, \hat{\gm}(\tau, \tau_1)\vd; [\tau_1, \tau_2]). $$

\end{proof}

\subsection{The spiral case.} We will prove Property (a) of Theorem \ref{thm 1 singular} in this subsection. For this we need the following lemma.

\begin{lem}\label{lem3.1}
For any $[t_1, t_2] \subset (T^-, T^+)$ and any $\hat{t} \in (t_1, t_2)$,
\bea
|m^{-}(q;t_{1},t_{2})-m^{-}(q;t_{1},\hat{t})-m^{-}(q;\hat{t},t_{2})|\leq 3n^{*}.\nonumber
\eea
\end{lem}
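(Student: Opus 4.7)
The plan is to translate the statement about Morse indices into one about Maslov indices via the Morse index theorem \eqref{Mor-Mas}, decompose via the path additivity of the Maslov index to isolate a single residual term, and bound that residual using the H\"ormander index estimate \eqref{hormd}.

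First I would apply \eqref{Mor-Mas} to each of the three intervals to obtain
\begin{align*}
m^-(q;t_1,t_2) + n^* &= \mu(\vd, \gamma(t,t_1)\vd;\,[t_1,t_2]),\\
m^-(q;t_1,\hat t) + n^* &= \mu(\vd, \gamma(t,t_1)\vd;\,[t_1,\hat t]),\\
m^-(q;\hat t,t_2) + n^* &= \mu(\vd, \gamma(t,\hat t)\vd;\,[\hat t,t_2]).
\end{align*}
Note the mismatch: the third Morse index is paired with the fundamental solution based at $\hat t$, not at $t_1$. Applying path additivity (Property III) to the first identity and subtracting the other two yields the clean formula
\[
m^-(q;t_1,t_2) - m^-(q;t_1,\hat t) - m^-(q;\hat t,t_2) = \mu\bigl(\vd,\gamma(t,t_1)\vd;\,[\hat t,t_2]\bigr) - \mu\bigl(\vd,\gamma(t,\hat t)\vd;\,[\hat t,t_2]\bigr) + n^*.
\]
Thus the lemma reduces to showing that the Maslov difference on the right has absolute value at most $2n^*$.

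For this, use the cocycle identity $\gamma(t,t_1) = \gamma(t,\hat t)\gamma(\hat t,t_1)$ and set $V_1 := \gamma(\hat t,t_1)\vd \in \mathrm{Lag}(2n^*)$, so that $\gamma(t,t_1)\vd = \gamma(t,\hat t) V_1$. Applying the symplectic invariance Property IV with the $t$-dependent symplectic path $\gamma(\hat t,t) = \gamma(t,\hat t)^{-1}$ transforms both terms into Maslov indices of the \emph{same} Lagrangian path $t\mapsto \gamma(\hat t,t)\vd$ against two fixed Lagrangians:
\[
\mu(\vd,\gamma(t,\hat t) V_1;\,[\hat t,t_2]) - \mu(\vd,\gamma(t,\hat t)\vd;\,[\hat t,t_2]) = \mu(\gamma(\hat t,t)\vd, V_1;\,[\hat t,t_2]) - \mu(\gamma(\hat t,t)\vd, \vd;\,[\hat t,t_2]).
\]
The right-hand side is precisely a H\"ormander-type difference of Maslov indices relative to two different fixed Lagrangians, and after transposing positions via Property VI it is controlled by \eqref{hormd}, giving the bound $2n^*$. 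Combining with the extra $n^*$ from the Morse index theorem mismatch produces the claimed estimate $3n^*$.

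The main obstacle is the careful application of Property VI to move the fixed Lagrangian from the second slot to the first, because this operation introduces boundary intersection-dimension corrections that must be accounted for in the final estimate; a transparent alternative is to view the two Lagrangian paths as the top and bottom edges of the two-parameter family $\Lambda_s(t) = \gamma(t,\hat t) W(s)$ (with $W(s)$ any continuous path in $\mathrm{Lag}(2n^*)$ from $V_1$ to $\vd$) and apply the Maslov rectangle/Leray identity, so that the Maslov difference becomes a difference of Maslov indices of the side paths $\Lambda_s(\hat t) = W(s)$ and $\Lambda_s(t_2) = \gamma(t_2,\hat t)W(s)$, which reduces directly to \eqref{hormd} after one more application of Property IV with the constant symplectic matrix $\gamma(\hat t, t_2)$.
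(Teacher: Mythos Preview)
Your proposal is correct, and your ``transparent alternative'' via the two-parameter family $\Lambda_s(t)=\gamma(t,\hat t)W(s)$ and the rectangle/Leray identity is exactly the route the paper takes: it chooses a path $\Lambda_s$ from $\gamma(\hat t,t_1)\vd$ to $\vd$, applies homotopy invariance around the rectangle to obtain
\[
\mu(\vd,\gamma(t,\hat t)\vd;[\hat t,t_2])-\mu(\vd,\gamma(t,\hat t)\Lambda_0;[\hat t,t_2])
= s(\gamma(\hat t,t_2)\vd,\vd;\Lambda_0,\vd),
\]
and then combines this with the Morse index theorem and \eqref{hormd} to get the bound $n^*+2n^*=3n^*$. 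Your initial variant (Property~IV with the moving symplectic path $\gamma(\hat t,t)$, followed by Property~VI) would also work, but as you correctly anticipate it forces you to track the boundary intersection-dimension terms from the symmetry axiom; the paper sidesteps that bookkeeping by going straight to the rectangle argument.
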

\begin{proof}
Let $\Lambda_{s}, s\in[0,1]$, be a continuous path of Lagrangian subspaces satisfying $\Lambda_{0}=\gamma(\hat{t},t_{1})\vd$ and $\Lambda_{1}=\vd$.
By the homotopy invariant property of the Maslov index,
\begin{align*}
\mu(\vd,\Lambda_{s};[0,1]) & +\mu(\vd,\gamma(t,\hat{t})\vd;[\hat{t},t_{2}])\\
 &=\mu(\vd,\gamma(t,\hat{t})\Lambda_{0};[\hat{t},t_{2}])+\mu(\vd,\gamma(t_{2},\hat{t})\Lambda_{s};[0,1]).
\end{align*}
Then
\begin{equation}  \label{bound}
\begin{split}
\mu(\vd,\gamma(t,\hat{t})\vd; & \; [\hat{t},t_{2}])-\mu(\vd,\gamma(t,\hat{t})\Lambda_{0};[\hat{t},t_{2}])\\
&=\mu(\vd,\gamma(t_{2},\hat{t})\Lambda_{s};[0,1])-\mu(\vd,\Lambda_{s};[0,1])\\
&=\mu(\gamma(\hat{t},t_{2})\vd,\Lambda_{s};[0,1])-\mu(\vd,\Lambda_{s};[0,1])\\
&=s(\gamma(\hat{t},t_{2})\vd,\vd;\Lambda_{0},\vd).
\end{split}
\end{equation}
By the Morse index theorem (\ref{Mor-Mas}),
\begin{equation} \label{eq: m=mu+mu}
\begin{aligned}
m^{-}(q;t_{1},t_{2})+n^{*} & =\mu(\vd,\gamma(t,t_{1})\vd; [t_{1},t_{2}]) \\
     &=\mu(\vd,\gamma(t,t_{1})\vd; [t_{1},\hat{t}])+\mu(\vd,\gamma(t,\hat{t})\Lambda_{0}; [\hat{t},t_{2}]).
\end{aligned}
\end{equation}
Meanwhile
\begin{equation*}
m^{-}(q;t_{1},\hat{t})+m^{-}(q;\hat{t},t_{2})+2n^{*} =\mu(\vd,\gamma(t,t_{1})\vd; [t_{1},\hat{t}])+\mu(\vd,\gamma(t,\hat{t})\vd; [\hat{t},t_{2}]).
\end{equation*}
Combining this with (\ref{bound}), \eqref{eq: m=mu+mu} and (\ref{hormd}), we get
\begin{equation*}
\begin{aligned}
|m^{-}(q;t_{1},t_{2})-m^{-}(q;t_{1},\hat{t})& -m^{-}(q;\hat{t},t_{2})| \\
& \leq n^{*}+|s(\gamma(\hat{t},t_{2})\vd,\vd;\Lambda_{0},\vd)|\leq 3n^{*}.
\end{aligned}
\end{equation*}
This completes the proof.
\end{proof}

The following theorem is a restatement of property (a) in Theorem \ref{thm 1 singular}.
\begin{thm}
\label{thm: non-hyper}
 If $q(T^{\pm})$ is a total collision or a parabolic infinity  with the corresponding $s^{\pm}$ satisfying the spiral condition, then for any $t_1 \in (T^-, T^+)$,
\begin{equation*}
\begin{aligned}
\lim_{t_{2} \to T^+}\frac{m^{-}(q; t_{1}, t_{2})}{|\ln\beta(t_{2})|}
& =\frac{1}{3\sqrt{2}\pi}\sum_{i=1}^{l^+}\sqrt{-\frac{1}{8}-\frac{\lambda_{i(s^{+})}}{U(s^{+})}}, \\
\lim_{t_{2} \to T^-}\frac{m^{-}(q; t_{2}, t_{1})}{|\ln\beta(t_{2})|}
& =\frac{1}{3\sqrt{2}\pi}\sum_{i=1}^{l^-}\sqrt{-\frac{1}{8}-\frac{\lambda_{i(s^{-})}}{U(s^{-})}},
\end{aligned}
\end{equation*}

where  $l^{\pm}=\#\{1 \le i \le n^*-1:\lmd_{i}(s^{\pm})<-\frac{U(s^{\pm})}{8}\}$ and $\beta(t)$ satisfies \eqref{eq: beta}.
\end{thm}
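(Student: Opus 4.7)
I would convert $m^-(q;t_1,t_2)$ to a Maslov index via \eqref{Mor-Mas}, push the computation into the McGehee time $\tau$ where the linearized Hamiltonian $\bh(\tau)$ has an asymptotic constant limit $\bh(\tau^*)$ (Lemma \ref{lem: hat B McGehee I}), and extract the linear-in-$\tau$ growth produced by the spiral (elliptic) eigenvalues of $J\bh(\tau^*)$. By the Morse index theorem \eqref{Mor-Mas} and Lemma \ref{lem: Maslov index t tau},
\[ m^-(q;t_1,t_2)+n^* = \mu\bigl(\vd,\gh(\tau,\tau_1)\vd;[\tau_1,\tau_2]\bigr),\qquad \tau_i=\tau(t_i). \]
Lemma \ref{lem:masy1} gives $\tau_2\to\tau^*=\pm\infty$ as $t_2\to T^{\pm}$, and Lemma \ref{lem3.1} shows that replacing $\tau_1$ by any fixed $\tau_*$ costs at most $3n^*$. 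Because $\bh(\tau)\to\bh(\tau^*)$, the monotone property \eqref{adp1.5monotone} would sandwich $\mu(\vd,\gh(\tau,\tau_*)\vd;[\tau_*,\tau_2])$ between the Maslov indices of the constant Hamiltonians $\bh(\tau^*)\pm\epsilon I$, once $\tau_*$ is chosen so large that $\|\bh(\tau)-\bh(\tau^*)\|\leq\epsilon$ for all $\tau\geq\tau_*$; letting $\epsilon\to 0$ then reduces everything to the frozen autonomous system.

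\textbf{Computing the frozen Maslov index.} Using the symplectic splitting $\bh(\tau^*)=\bh_1(\tau^*)\diamond\bh_2(\tau^*)$ and Property V, the contributions from the two summands add. By Proposition \ref{prop: hyp type I}, $J\bh_1(\tau^*)$ is hyperbolic with real eigenvalues $\pm\frac{5}{2\sqrt{2}}\sqrt{U(s^*)}$, whose stable and unstable Lagrangian subspaces become transverse to $\vd$ after a bounded initial window, contributing only $O(1)$. For $\bh_2(\tau^*)$, the symplectic conjugations $A_d$ and $O$ of Proposition \ref{prop: hyp type I}, combined with the eigen-decomposition of $A^T\U_{xx}(x^*)A$ whose eigenvalues are $\lmd_i(s^*)$ by Lemma \ref{lem: eigenvalue U hat}, decompose the flow into a direct sum of $2\times 2$ Hamiltonian blocks with eigenvalues $\pm\sqrt{U(s^*)/8+\lmd_i(s^*)}$. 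The $l^{\pm}$ spiral blocks ($i\leq l^{\pm}$) are elliptic with angular frequency $\om_i=\sqrt{-U(s^*)/8-\lmd_i(s^*)}$; a direct crossing-form computation gives that each contributes $\om_i|\tau_2-\tau_*|/\pi+O(1)$ to the Maslov index, while the remaining hyperbolic blocks contribute $O(1)$. Summing,
\[ \mu\bigl(\vd,\gh(\tau,\tau_*)\vd;[\tau_*,\tau_2]\bigr) = \frac{|\tau_2-\tau_*|}{\pi}\sum_{i=1}^{l^{\pm}}\om_i+o(|\tau_2-\tau_*|). \]

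\textbf{Converting $\tau$ back to $t$.} Integrating $d\tau/dt=r^{-3/2}$ and inserting the Sundman-Sperling estimate $r(t)\sim[18U(s^*)]^{1/3}\beta^{2/3}(t)$ from Lemma \ref{lem: asy1} yields $|\tau_2-\tau_*|=|\ln\beta(t_2)|/(3\sqrt{2U(s^*)})+O(1)$ as $t_2\to T^{\pm}$. Since $\om_i/\sqrt{U(s^*)}=\sqrt{-1/8-\lmd_i(s^*)/U(s^*)}$, substituting into the previous display and dividing by $|\ln\beta(t_2)|$ produces the claimed limit; the $t_2\to T^-$ case is symmetric.

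\textbf{Main obstacle.} The delicate step is the squeezing in the reduction: to guarantee an error that is genuinely $o(|\tau_2-\tau_*|)$ rather than an uncontrolled $O(|\tau_2-\tau_*|)$, one must verify that the angular frequencies of the constant Hamiltonians $\bh(\tau^*)\pm\epsilon I$ depend continuously on $\epsilon$ and converge to the $\om_i$ as $\epsilon\to 0$. This follows from continuity of the eigenvalues of $J(\bh(\tau^*)\pm\epsilon I)$, but must be applied carefully so that both sides of the sandwich share the same linear coefficient $\pi^{-1}\sum\om_i$ in the limit.
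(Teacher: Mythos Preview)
Your proposal is correct and follows essentially the same route as the paper's proof: convert to the Maslov index in McGehee time via \eqref{Mor-Mas} and Lemma~\ref{lem: Maslov index t tau}, sandwich $\bh(\tau)$ between $\bh(\tau^*)\pm\epsilon(\cdot)$ using the monotone property, split via the symplectic sum $\bh_1\diamond\bh_2$, diagonalize $\bh_2$ into $2\times2$ blocks whose elliptic pieces give linear-in-$\tau$ crossing growth, use Lemma~\ref{lem3.1} to absorb the change of base point, and convert $\tau$ back to $t$ via Sundman--Sperling. The paper carries out the block computation by explicitly solving $c_i''=f_i^{\pm}(\epsilon)c_i$ and counting sine zeros, and it perturbs by $\epsilon\,I_2\diamond E_{\hat M_*}$ rather than $\epsilon I$ so that the conjugation by $A_d$ turns the perturbation into $\epsilon I$; but these are cosmetic differences, and your identification of the continuity of the angular frequencies in $\epsilon$ as the crux is exactly what the explicit formula $f_i^{\pm}(\epsilon)\to U(s^*)/8+\lambda_i$ accomplishes in the paper.
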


\begin{proof}
Let's assume $q(T^+)$ is a total collision or a parabolic infinity  with $s^{+}$ satisfying the spiral condition (the case for $q(T^-)$ can be proven similarly).

To keep the notations consistent with those in Section \ref{sec: McGehee coordinates}. Let
$$ \tau^*= +\infty, \; s^*= s^+, \; v^*= \sqrt{2U(s^+)}, \; x^*= \psi(s^+), \; \hat{M}_*= \hat{M}_+ =\hat{M}(x^*). $$
Then
$$ \lim_{\tau\rightarrow\tau^{*}}\hat{B}(\tau^{*})=\hat{B}_{1}(\tau^{*})\diamond\hat{B}_{2}(\tau^{*}), $$
where $\hat{B}_{1}(\tau^{*}), \hat{B}_{2}(\tau^{*})$ are given by (\ref{eq: B hat 1 2 type I}).

For $\varepsilon>0$ small enough (its precise value will be given later), there exists a $\tau_{\vep}$ (depending on $\vep$) large enough,  such that $\forall \tau>\tau_{\vep}$,
$$ \hat{B}_{1}(\tau^{*})\diamond\hat{B}_{2}(\tau^{*})-\varepsilon I_{2}
\diamond E_{\hat{M}_{*}} \le \hat{B}(\tau)\leq\hat{B}_{1}(\tau^{*})\diamond\hat{B}_{2}(\tau^{*})+\varepsilon I_{2}
\diamond E_{\hat{M}_{*}}. $$
where $E_{\hat{M}_*}=diag(\hat{M}_*^{-1},\hat{M}_*)$.

Let $\hat{\gm}(\tau, \tau_{\vep})$ be the fundamental solution of \eqref{eq: linear Ham tau}, i.e.,
\begin{equation}
\begin{cases}
\hat{\ga}^\prime(\tau,\tau_{\vep})&=J\hat{B}(\tau)\hat{\ga}(\tau,\tau_{\vep}), \\
\hat{\ga}(\tau_{\vep},\tau_{\vep})&=I.
\end{cases} \nonumber
\end{equation}

By the monotone property of the Maslov index
\bea
\begin{cases}
\mu(\vd,\hat{\gm}(\tau,\tau_{\vep})\vd;[\tau_{\vep},\tau_{2}]) \leq
\mu(\vd,\hat{\gamma}^{+}(\tau,\tau_{\vep})\vd;[\tau_{\vep},\tau_{2}]);\\
\mu(\vd,\hat{\gm}(\tau,\tau_{\vep})\vd;[\tau_{\vep},\tau_{2}]) \geq
\mu(\vd,\hat{\gamma}^{-}(\tau,\tau_{\vep})\vd;[\tau_{\vep},\tau_{2}]).
\end{cases}
\label{48}
\eea
where $\hat{\gamma}^{\pm}(\tau,\tau_{\vep})$ satisfies
\begin{equation}
\begin{cases}
\hat{\ga}^{\pm^{\prime}}(\tau,\tau_{\vep})&=J(\hat{B}_{1}(\tau^{*})\diamond\hat{B}_{2}(\tau^{*})\pm\varepsilon I_{2}
 \diamond E_{\hat{M}_*})\hat{\ga}^\pm(\tau,\tau_{\vep}); \\
\hat{\ga}^{\pm}(\tau_{\vep},\tau_{\vep})&=I.
\end{cases} \nonumber
\end{equation}
Meanwhile by the decomposition property of the Maslov index,
\bea
\mu(\vd,\hat{\gamma}^{\pm}(\tau,\tau_{\vep})\vd;[\tau_{\vep},\tau_{2}])
=\sum_{i=1}^{2}\mu(\vd,\hat{\gamma}^{\pm}_{i}(\tau,\tau_{\vep})\vd;[\tau_{\vep},\tau_{2}]),\label{49}
\eea
where $\hat{\gamma}^{\pm}_{1}(\tau,\tau_{\vep}),\hat{\gamma}^{\pm}_{2}(\tau,\tau_{\vep})$ satisfy
\begin{equation}
\begin{cases}
\hat{\ga}^{\pm^{\prime}}_{1}(\tau,\tau_{\vep})&=J(\hat{B}_{1}(\tau^{*})\pm\varepsilon I)
\hat{\ga}^{\pm}_{1}(\tau,\tau_{\vep});\\
\hat{\ga}^{\pm}_{1}(\tau_{\vep},\tau_{\vep})&=I,
\end{cases}
\nonumber
\end{equation}
\begin{equation*}
\begin{cases}
\hat{\ga}^{\pm^{\prime}}_{2}(\tau,\tau_{\vep})&=J(\hat{B}_{2}(\tau^{*})\pm\varepsilon E_{\hat{M}_{*}})\hat{\ga}^{\pm}_{2}(\tau,\tau_{\vep}); \\
\hat{\ga}^{\pm}_{2}(\tau_{\vep},\tau_{\vep})&=I.
\end{cases}
\end{equation*}

Let $\hat{\ga}^{\pm}_{1}(\tau,\tau_{\vep})\vd=(b^{\pm}(\tau),d^{\pm}(\tau))^{T}$, then
\bea
\mu(\vd,\hat{\ga}^{\pm}_{1}(\tau,\tau_{\vep})\vd;[\tau_{\vep},\tau_{2}])=
\#\{\tau: d^{\pm}(\tau)=0,\tau\in[\tau_{\vep},\tau_{2}]\}.\nonumber
\eea
Direct computations show that $d(\tau)$ satisfy
\begin{equation}
\begin{cases}
d^{\pm^{\prime\prime}}(\tau)&=g^{\pm}(\varepsilon)d^{\pm}(\tau), \\
d^{\pm^{\prime}}(\tau_{\vep})&=1\pm\varepsilon,\\
d^{\pm}(\tau_{\vep})&=0.
\end{cases} \nonumber
\end{equation}
where
$$ g^{\pm}(\varepsilon)=\frac{9}{16}v^{*^{2}}+(1\mp\varepsilon)(2U(s^{+})\pm\varepsilon),$$
and $g(\varepsilon)>0$, when $0 < \vep < \min \{1, 2U(s^*)\}$. Hence
\bea
\mu(\vd,\hat{\ga}^{\pm}_{1}(\tau,\tau_{\vep})\vd;[\tau_{\vep},\tau_{2}])=1.\label{47}
\eea

For $\hat{\ga}^{\pm}_{2}(\tau,\tau_{\vep})$, there exist two matrices $A$ and $C$ satisfying
$$ A^{T}\hat{M}_*A=I, C^{T}C=I, $$
and
$$ C^{T}(A^{T}\hat{U}_{xx}(x^{*})A)C=\text{diag}(\lmd_{1}(s^*),\cdots,\lmd_{n^{*}-1}(s^*))). $$
Set $ A_{d}=\text{diag}(A^{T},A^{-1})$ and $C_{d}=\text{diag}(C^{T},C^{-1})$. Then
$$\hat{\xi}^{\pm}(\tau,\tau_{\vep})=C_{d}A_{d}\hat{\gm}^{\pm}_{2}(\tau,\tau_{\vep})A_{d}^{-1}C_{d}^{-1} $$
satisfies equations,
\begin{equation}
\begin{cases}
\hat{\xi}^{\pm^{\prime}}(\tau,\tau_{\vep})&=J\Phi_{C_{d}A_{d}}(\hat{B}_{2}(\tau^{*})\pm\varepsilon E_{\hat{M}_*})\hat{\xi}^\pm(\tau,\tau_{\vep}), \\
\hat{\xi}^{\pm}(\tau_{\vep},\tau_{\vep})&=I.
\end{cases} \nonumber
\end{equation}
where
\begin{equation*}
\begin{aligned}
\Phi_{C_{d}A_{d}}(\hat{B}_{2}(\tau^{*})\pm\varepsilon E_{\hat{M}_*})= & \left(
                                           \begin{array}{cc}
                                             1\pm\varepsilon & \frac{1}{4}v^{*} \\
                                             \frac{1}{4}v^{*} & -\lmd_{1}(s^*)\pm\varepsilon \\
                                           \end{array}
                                         \right)
\diamond\cdots \\
& \dots \diamond\left(
                \begin{array}{cc}
                  1\pm\varepsilon & \frac{1}{4}v^{*} \\
                  \frac{1}{4}v^{*} & -\lmd_{n^{*}-1}(s^*)\pm\varepsilon \\
                \end{array}
              \right)
\end{aligned}
\end{equation*}

A similar argument as in the proof of Lemma \ref{lem: Maslov index t tau} shows
\bea
\mu(\vd,\hat{\gamma}^{\pm}_{2}(\tau,\tau_{\vep})\vd;[\tau_{\vep},\tau_{2}])=\mu(\vd,\hat{\xi}^{\pm}(\tau,\tau_{\vep})\vd;[\tau_{\vep},\tau_{2}]).
\label{50}
\eea
Again by the decomposition property of the Maslov index,
\bea
\mu(\vd,\hat{\xi}^{\pm}(\tau,\tau_{\vep})\vd;[\tau_{\vep},\tau_{2}])=
\sum_{i=1}^{n^{*}-1}\mu(\vd,\hat{\xi}^{\pm}_{i}(\tau,\tau_{\vep})\vd;[\tau_{\vep},\tau_{2}]),\label{51}
\eea
where each $\hat{\xi}^{\pm}_{i}(\tau,\tau_{\vep})$ satisfies
\begin{equation}
\begin{cases}
\hat{\xi}^{*^{\prime}}_{i}(\tau,\tau_{\vep})&=J\left(
                                              \begin{array}{cc}
                                                1\pm\varepsilon & \frac{1}{4}v^{*} \\
                                                \frac{1}{4}v^{*} & -\lmd_{i}(s^*)\pm\varepsilon \\
                                              \end{array}
                                            \right)
\hat{\xi}^{\pm}_{i}(\tau,\tau_{\vep}), \\
\hat{\xi}^{\pm}_{i}(\tau_{\vep},\tau_{\vep})&=I.
\end{cases} \nonumber
\end{equation}

Let $\hat{\xi}^{\pm}_{i}(\tau,\tau_{\vep})\vd=(a^{\pm}_{i}(\tau),c^{\pm}_{i}(\tau))^{T}
$. Then
\bea
\mu(\vd,\hat{\xi}^{\pm}_{i}(\tau,\tau_{\vep})\vd;[\tau_{\vep},\tau_{2}])=
\#\{\tau: c^{\pm}_{i}(\tau)=0,\tau\in[\tau_{\vep},\tau_{2}]\}.\label{52}
\eea
By a direct computation, $c_i^{\pm}(\tau)$ satisfies
\begin{equation} \label{eq: ci}
\begin{cases}
c^{\pm^{\prime\prime}}_{i}(\tau)&=f^{\pm}_{i}(\varepsilon)c^{\pm}_{i}(\tau), \\
c^{\pm^{\prime}}_{i}(\tau_{\vep})&=1\pm\varepsilon,\\
c_{i}(\tau_{\vep})&=0,
\end{cases}
\end{equation}
where
\bea f^{\pm}_{i}(\varepsilon)=\frac{1}{16}v^{*^{2}}+(1\mp\varepsilon)(\lmd_{i}\pm\varepsilon). \label{53}\eea
Assume
$$ l=\#\{1 \le i \le n^*-1:\lmd_{i}(s^*)<\frac{-U(s^*)}{8}\}, $$
$$ w=\#\{1 \le i \le n^*-1:\lmd_{i}(s^*)=\frac{-U(s^*)}{8}\}. $$
Let $\vep$ be smaller than
$$   -\frac{\lmd_{l}(s^*)}{2}+\frac{1}{2}-\frac{1}{2}\sqrt{(\lmd_{l}(s^*)+1)^{2}+U(s^*)/2}$$
and, if $l+w+1 \le n^*-1$, also smaller than
$$ \frac{\lmd_{l+w+1}(s^*)}{2}-\frac{1}{2}+\frac{1}{2}\sqrt{(\lmd_{l+w+1}(s^*)+1)^{2}+U(s^*)/2}. $$
Then
$$
\begin{cases}
f^{+}_{i}(\varepsilon) <0, \ \ \text{ if }  1\leq i\leq l+w,\\
f^{+}_{i}(\varepsilon) >0, \ \ \text{ if } l+w < i \le n^*-1,
\end{cases}
$$
and
$$ \begin{cases}
f^{-}_{i}(\varepsilon) <0, \ \ \text{ if } 1\leq i\leq l,\\
f^{-}_{i}(\varepsilon) >0, \ \ \text{ if } l < i \le n^*-1.
\end{cases}
$$
Then by \eqref{eq: ci}, we get
\begin{equation*}
c^{+}_{i}(\tau)=
\begin{cases}
\frac{1+\varepsilon}{\sqrt{-f^{+}_{i}(\varepsilon)}} \sin(\sqrt{-f^{+}_{i}(\varepsilon)}(\tau-\tau_{\vep})), & \text{ if } 1\leq i\leq l+w;\\
 1, & \text{ if } l+w < i \le n^*-1,
\end{cases}
\end{equation*}

\begin{equation*}
c^{-}_{i}(\tau)=
\begin{cases}\frac{1-\varepsilon}{\sqrt{-f^{-}_{i}(\varepsilon)}} \sin(\sqrt{-f^{-}_{i}(\varepsilon)}(\tau-\tau_{\vep})), & \text{ if } 1\leq i\leq l;\\
 1, & \text{ if } l < i \le n^*-1.
\end{cases}
\end{equation*}
These imply
\begin{equation*}
\#\{\tau: c^{+}_{i}(\tau)=0,\tau\in[\tau_{\vep},\tau_{2}]\}=
\begin{cases}
[\frac{\sqrt{-f^{+}_{i}(\varepsilon)}(\tau_{2}-\tau_{\vep})}{\pi}]+1, & \text{ if } 1\leq i\leq l+w;\\
1, & \text{ if } l+w < i \le n^*-1,
\end{cases}
\end{equation*}
\begin{equation*}
\#\{\tau: c^{-}_{i}(\tau)=0,\tau\in[\tau_{\vep},\tau_{2}]\}=
\begin{cases}
[\frac{\sqrt{-f^{-}_{i}(\varepsilon)}(\tau_{2}-\tau_{\vep})}{\pi}]+1, & \text{ if } 1\leq i\leq l,\\
1, & \text{ if } l < i \le n^*-1.
\end{cases}
\end{equation*}

Together with (\ref{48}),(\ref{49}),(\ref{47}),(\ref{50}),(\ref{51}),(\ref{52}), we get
\bea
\mu(\vd,\hat{\gm}(\tau,\tau_{\vep})\vd;[\tau_{\vep},\tau_{2}])\leq
\sum_{i=1}^{l+w}[\frac{\sqrt{-f^{+}_{i}(\varepsilon)}(\tau_{2}-\tau_{\vep})}{\pi}]+n^{*}. \nonumber\\
\mu(\vd,\hat{\gm}(\tau,\tau_{\vep})\vd;[\tau_{\vep},\tau_{2}])\geq
\sum_{i=1}^{l}[\frac{\sqrt{-f^{-}_{i}(\varepsilon)}(\tau_{2}-\tau_{\vep})}{\pi}]+n^{*}.\nonumber
\eea

Let $t_{\vep}= t(\tau_{\vep})$ and $t_2 = t(\tau_2)$ be the Newtonian times corresponding to $\tau_{\vep}$ and $\tau_2$. By the Morse index theorem \eqref{Mor-Mas} and Lemma \ref{lem: Maslov index t tau}, we have
\begin{equation} \label{55}
\begin{aligned}
m^{-}(q;t_{\vep},t_{2}) & \leq\sum_{i=1}^{l+w}[\frac{\sqrt{-f^{+}_{i}(\varepsilon)}(\tau_{2}-\tau_{\vep})}{\pi}], \\
m^{-}(q;t_{\vep},t_{2}) & \geq\sum_{i=1}^{l}[\frac{\sqrt{-f^{-}_{i}(\varepsilon)}(\tau_{2}-\tau_{\vep})}{\pi}],
\end{aligned}
\end{equation}

From the proof of Lemma \ref{lem:masy1}, we get
$$\lim_{t \to T^{+}}\frac{\tau(t)}{|\ln\beta(t)|}=\frac{1}{3\sqrt{2U(s^{+})}}.$$
Combining this with (\ref{55}) gives us
\bea
\overline{\lim}_{t_{2} \to T^{+}}\frac{m^{-}
(q;t_{\vep},t_{2})}{|\ln\beta(t_{2})|}\leq \frac{1}{3\sqrt{2}\pi}\sum_{i=1}^{l+w}\sqrt{\frac{-f^{+}_{i}(\varepsilon)}{U(s^{+})}},\nonumber\\
\underline{\lim}_{t_{2} \to T^{+}}\frac{m^{-}
(q;t_{\vep},t_{2})}{|\ln\beta(t_{2})|}\geq \frac{1}{3\sqrt{2}\pi}\sum_{i=1}^{l}\sqrt{\frac{-f^{-}_{i}(\varepsilon)}{U(s^{+})}}.\nonumber
\eea

Fix an arbitrary $t_1 < t_{\vep}$, combining the above ineqaulities with Lemma \ref{lem3.1} and the fact that $\lim_{t_2 \to T^+} |\beta(t_2)| = +\infty$ gives us
\bea
\overline{\lim}_{t_{2} \to T^{+}}\frac{m^{-}
(q;t_{1},t_{2})}{|\ln\beta(t_{2})|} \leq \overline{\lim}_{t_{2} \to T^{+}}\frac{m^{-}
(q;t_{\vep},t_{2})}{|\ln\beta(t_{2})|}\leq \frac{1}{3\sqrt{2}\pi}\sum_{i=1}^{l+w}\sqrt{\frac{-f^{+}_{i}(\varepsilon)}{U(s^{+})}},\nonumber\\
\underline{\lim}_{t_{2} \to T^{+}}\frac{m^{-}
(q;t_{1},t_{2})}{|\ln\beta(t_{2})|}\geq \underline{\lim}_{t_{2} \to T^{+}}\frac{m^{-}
(q;t_{\vep},t_{2})}{|\ln\beta(t_{2})|}\geq \frac{1}{3\sqrt{2}\pi}\sum_{i=1}^{l}\sqrt{\frac{-f^{-}_{i}(\varepsilon)}{U(s^{+})}}.\nonumber
\eea
Since the above equalities hold for any $\vep>0$ small enough, the desired result follows from the fact that
\begin{equation*}
\lim_{\varepsilon \to 0}f_{i}^{\pm}(\varepsilon)= \begin{cases}
\frac{U(s^{*})}{8}+\lmd_{i}, & \text{ when } 1 \le i \le l ; \\
0, & \text{ when } l+1 \le i \le l+w.
\end{cases}
\end{equation*}

\end{proof}

\subsection{The strict non-spiral case.} In this subsection, we assume $q(T^{\pm})$ is either a hyperbolic infinity, a parabolic infinity or a total collision, where in the last two cases we further assume the corresponding limit $s^{\pm} = \lim_{t \to T^{\pm}} s(t)$ exists and satisfies the strict non-spiral condition. A proof of Property (b) in Theorem \ref{thm 1 singular} will be given in this subsection.

In this case, it is difficult to compute the Maslov index in \eqref{Mor-Mas} directly. Instead we introduce and compute another Maslov index. Then estimate the difference between these two different Maslov indices using the H\"{o}rmander index. This new Maslov index was first introduced in \cite{CH07} to study the index of homoclinic solutions (further works can be found in \cite{HO} and \cite{HP17}).

\begin{defi} \label{dfn: stable subspace}
At each moment $\tau \in \rr$, we define the \textbf{stable subspace} $V^s(\tau)$ and \textbf{unstable subspace} $V^u(\tau)$ associated with the linear system \eqref{eq: linear Ham tau} as
\begin{equation}
\begin{split}
V^s(\tau) &=\{v \in \mathbb{R}^{2n^*}|\lim_{\sg \to +\infty}\hat{\gamma}(\sg, \tau) v=0\}, \\
V^u(\tau) &=\{v \in \mathbb{R}^{2n^*}|\lim_{\sg \to -\infty}\hat{\gamma}(\sg, \tau) v=0\}.
\end{split}
\end{equation}
\end{defi}

To study the limiting behaviors of the stable and unstable subspaces some topology of the linear subspaces will be needed. Let $\mathcal{G}(\rr^{2k})$ be the Grassmannian of $\rr^{2k}$, i.e. the set of all closed linear subspaces of $\rr^{2k}$. For any $W \in \mathcal{G}(\rr^{2k})$, let $P_W$ be the orthogonal projection of $\rr^{2k}$ to $W$. Then
$$ \text{dist}(W, W^*):= \|P_W-P_{W^*}\|, \text{ for any } W, W^* \in \mathcal{G}(\rr^{2k}), $$
defines a complete metric on $\mathcal{G}(\rr^{2k})$. Here $\|\cdot \|$ represents the metric on the space of bounded linear operators from $\rr^{2k}$ to itself.

\begin{defi}
Given an arbitrary hyperbolic matrix $A$, we define $V^{+}(A)$ and $V^{-}(A)$ as the invariant linear subspaces corresponding to the eigenvalues with positive and negative real parts.
\end{defi}

Set $\tau^*= \pm \infty$. By Proposition \ref{prop: hyp type I} and \ref{prop bh hyperbolic type II}, $\bh(\tau^*)= \lim_{\tau \to \tau^*} \bh(\tau)$ exists and $J\bh(\tau^*)$ is always a hyperbolic matrix under our assumption. Next result gives the limits of the linear subspaces introduced above, for a proof see \cite[Theorem 2.1]{AM}.

\begin{prop} \label{prop: Vus limit}
\begin{enumerate}
\item[(a).] If $J\bh(+\infty)$ is a hyperbolic matrix and $W$ is a linear subspace satisfying $W \pitchfork V^s(\tau_1)$ in $\rr^{2n^*}$, for some $\tau_1$, then
$$\lim_{\tau \to +\infty}V^s(\tau)=V^-(J\bh(+\infty)), \;\; \lim_{\tau \to +\infty} \hat{\gm}(\tau, \tau_1)W = V^+(J\bh(+\infty)). $$
\item[(b).] If $J\bh(-\infty)$ is a hyperbolic matrix and $W$ is a linear subspace satisfying $W \pitchfork V^u(\tau_1)$ in $\rr^{2n^*}$, for some $\tau_1$, then
$$\lim_{\tau \to -\infty} V^u(\tau)=V^+(J\bh(-\infty)), \;\; \lim_{\tau \to -\infty} \hat{\gm}(\tau, \tau_1)W = V^-(J\bh(-\infty)). $$
\end{enumerate}
\end{prop}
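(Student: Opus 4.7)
The plan is to reduce both claims to the standard roughness theorem for exponential dichotomies (cf. Coppel's book on dichotomies, or the reference \cite{AM} cited immediately after the statement), applied to the asymptotically autonomous linear Hamiltonian system $\eta' = J\hat{B}(\tau)\eta$. By Lemmas \ref{lem: hat B McGehee I} and \ref{lem: hat B McGehee II} (and the hyperbolicity results Propositions \ref{prop: hyp type I} and \ref{prop bh hyperbolic type II}) the limits $\hat{B}(\pm\infty)$ exist and the frozen matrix $J\hat{B}(+\infty)$ is hyperbolic, so the constant-coefficient system $\eta' = J\hat{B}(+\infty)\eta$ admits an exponential dichotomy on $\mathbb{R}$ whose stable/unstable spectral subspaces are exactly $V^-(J\hat{B}(+\infty))$ and $V^+(J\hat{B}(+\infty))$.

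First I would show that because $\|\hat{B}(\tau) - \hat{B}(+\infty)\| \to 0$ as $\tau \to +\infty$, the perturbed system inherits an exponential dichotomy on some half-line $[T,+\infty)$, with stable/unstable projectors $P^s(\tau), P^u(\tau)$ such that $\|P^s(\tau) - P^s_{\infty}\| \to 0$ and $\|P^u(\tau) - P^u_{\infty}\| \to 0$, where $P^s_\infty, P^u_\infty$ are the spectral projectors of $J\hat{B}(+\infty)$ onto $V^-, V^+$. This is a direct application of the roughness theorem; the hypothesis is the asymptotic vanishing of the perturbation, not its smallness everywhere. The identification $V^s(\tau) = \operatorname{range} P^s(\tau)$ follows from the characterization in Definition \ref{dfn: stable subspace}, since for a dichotomous system a vector evolves to zero in forward time iff it lies in the stable range. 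Convergence of the projectors in operator norm then yields convergence of the ranges in the Grassmannian metric, which gives the first half of (a).

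For the second half of (a), the transversality $W \pitchfork V^s(\tau_1)$ forces $P^u(\tau_1)|_W : W \to \operatorname{range} P^u(\tau_1)$ to be surjective; hence for every $w \in W$, the splitting $w = w^s + w^u$ induced by the dichotomy at $\tau_1$ has $\|\hat{\gamma}(\tau,\tau_1)w^s\| \le C e^{-\alpha(\tau - \tau_1)}\|w^s\|$ and $\|\hat{\gamma}(\tau,\tau_1)w^u\| \ge C^{-1} e^{\alpha(\tau - \tau_1)}\|w^u\|$ for large $\tau$. Rescaling each evolved vector to unit norm and passing to the Grassmannian projector $P_{\hat{\gamma}(\tau,\tau_1)W}$, the stable component becomes exponentially negligible while the unstable component aligns with $\operatorname{range} P^u(\tau)$, which already converges to $V^+(J\hat{B}(+\infty))$. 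Combining these two rates yields $\hat{\gamma}(\tau,\tau_1)W \to V^+(J\hat{B}(+\infty))$ in the Grassmannian. Part (b) is obtained from part (a) by reversing time: the substitution $\tau \mapsto -\tau$ converts the forward problem at $+\infty$ into a backward problem at $-\infty$ and swaps the roles of stable/unstable subspaces and of $V^\pm$.

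The main obstacle will be making the convergence of ranges rigorous in the Grassmannian metric $\operatorname{dist}(W,W^*) = \|P_W - P_{W^*}\|$; one has to be careful that operator-norm convergence of the \emph{oblique} dichotomy projectors $P^{s,u}(\tau)$ actually implies convergence of the \emph{orthogonal} projectors onto their ranges. This is a standard but non-trivial point, handled by noting that whenever two closed subspaces of equal dimension have dichotomy projectors close in norm, their associated orthogonal projectors are also close, with an estimate controlled by the minimal angle between the limiting stable and unstable subspaces --- which is strictly positive by hyperbolicity of $J\hat{B}(+\infty)$. Once this is in place, both assertions of the proposition follow without further analytic input beyond the roughness theorem.
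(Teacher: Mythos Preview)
The paper does not actually prove this proposition: it simply states the result and refers the reader to \cite[Theorem 2.1]{AM} for a proof. Your sketch via the roughness theorem for exponential dichotomies is a correct outline of exactly the kind of argument that reference contains, so there is nothing to compare --- you have supplied what the paper outsources. One small redundancy: you invoke Propositions \ref{prop: hyp type I} and \ref{prop bh hyperbolic type II} for hyperbolicity, but in the proposition as stated hyperbolicity of $J\hat{B}(\pm\infty)$ is taken as a hypothesis, so those results are not needed here.
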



With the above proposition, we define another Maslov index as following.
\begin{defi}  \label{dfn: maslov index hete}
For any $\tau_0 \in \rr$, we define the \textbf{Maslov index} as
\begin{equation*}
\mu(\bh; \tau_0):= \mu(\vd,V^u(\tau); (-\infty,\tau_0]).
\end{equation*}
and its limit as (if the limit exists)
\begin{equation*}
\mu(\bh; \rr):=\lim_{\tau_0\to\infty}\mu(\bh; \tau_0).
\end{equation*}
We define the \textbf{degenerate index} as
\begin{equation*} \nu(\bh):=\text{dim}(V^u(0)\cap V^s(0)).
\end{equation*}
We say $\bh$ is \textbf{degenerate}, if $\nu(\bh) \ne 0$.
\end{defi}

The index defined above was introduced in the study of heteroclinic orbits (see \cite{HO, HP17}).
 Since we have assumed $\hat{B}(t)|_{\vd}>0$, then all crossings are positive \cite{RS1}.
\begin{equation}
\mu(\vd,V^u(\tau); \rr) =\sum_{\tau \in \rr} \text{dim}(V^u(\tau)\cap \vd).
\label{1.3c.1.1}
\end{equation}
Under the condition that $J\hat{B}(\pm\infty)$ are hyperbolic,   $-J\frac{d}{dt}-\hat{B}$ is a Fredholm operator and
\[\nu(\hat{B})=dim\ker (-J\frac{d}{dt}-\hat{B}), \]
For the details, see \cite{HP17}.

Now we use H\"{o}rmander index to estimate the difference between the two Maslov indices.
\begin{lem}\label{lem: mas rel}
\begin{enumerate}
\item[(a).] For any $\tau_1 < \tau_2$,
\begin{equation*}
\mu(\vd,\hat{\gamma}(\tau,\tau_{1})\vd;[\tau_{1},\tau_{2}])-\mu(\vd,V^{u}(\tau);[\tau_{1},\tau_{2}]) =s(\hat{\gamma}(\tau_{1},\tau_{2})\vd,\vd; V^{u}(\tau_{1}),\vd).
\end{equation*}
\item[(b).] For any $\tau_0 \in \rr$ and $T_0= t(\tau_0)$. If $\vd\pitchfork V^u(\tau_0)$ and $\vd\pitchfork V^{+}(J\hat{B}(-\infty))$,
\begin{equation} \label{eq: two Mas horm tau0}
m^{-}(q; T^-, T_0)+n^{*} -\mu(\hat{B}; \tau_0) = s(V^{-}(J\hat{B}(-\infty)), \vd; V^{+}(J\hat{B}(-\infty)), \vd).
\end{equation}
\item[(c).] If $V^{s}(0)\pitchfork V^{u}(0)$,  $\vd\pitchfork V^{+}(J\hat{B}(+\infty))$ and $\vd\pitchfork V^{+}(J\hat{B}(-\infty))$, then $\mu(\bh; \rr)$ is a finite number and
\begin{equation} \label{eq: two Mas horm}
m^{-}(q; T^-, T^+)+n^{*} -\mu(\hat{B}; \rr) = s(V^{-}(J\hat{B}(-\infty)), \vd; V^{+}(J\hat{B}(-\infty)), \vd).
\end{equation}
\end{enumerate}
\end{lem}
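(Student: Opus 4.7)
My plan is to prove the three parts in order, using (a) as the algebraic backbone and then passing to limits to get (b) and (c).

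For part (a), the key observation is flow invariance of the unstable bundle: $V^u(\tau) = \hat{\gamma}(\tau, \tau_1)\, V^u(\tau_1)$. I will pick any continuous Lagrangian path $\mathcal{L}(s)$ with $\mathcal{L}(0) = \vd$ and $\mathcal{L}(1) = V^u(\tau_1)$, form the two-parameter family of Lagrangian subspaces
\begin{equation*}
F(s,\tau) = \hat{\gamma}(\tau, \tau_1)\, \mathcal{L}(s), \qquad (s,\tau) \in [0,1] \times [\tau_1, \tau_2],
\end{equation*}
and compute $\mu(\vd, F(\cdot,\cdot))$ around the boundary of this rectangle. Homotopy invariance forces the four edge contributions to cancel, and after rearranging together with the symplectic invariance identity $\mu(\vd, \hat{\gamma}(\tau_2, \tau_1)\mathcal{L}(s); [0,1]) = \mu(\hat{\gamma}(\tau_1, \tau_2)\vd, \mathcal{L}(s); [0,1])$, the left-hand side of (a) reduces to a difference of Maslov indices along $\mathcal{L}$, which by the definition of the H\"ormander index equals $s(\vd, \hat{\gamma}(\tau_1, \tau_2)\vd;\, \vd, V^u(\tau_1))$. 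The antisymmetry of $s$ in each of its two pairs of slots then rewrites this as $s(\hat{\gamma}(\tau_1, \tau_2)\vd, \vd;\, V^u(\tau_1), \vd)$, as claimed.

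For part (b), I start from the Morse index theorem \eqref{Mor-Mas} applied to the truncation on $[t^-, T_0]$, pull it over to McGehee time via Lemma \ref{lem: Maslov index t tau}, and invoke (a) with $\tau_1 = \tau(t^-)$ and $\tau_0 = \tau(T_0)$ to get
\begin{equation*}
m^-(q; t^-, T_0) + n^* = \mu(\vd, V^u(\tau); [\tau_1, \tau_0]) + s\bigl(\hat{\gamma}(\tau_1, \tau_0)\vd, \vd;\, V^u(\tau_1), \vd\bigr).
\end{equation*}
I then let $t^- \to T^-$, i.e.\ $\tau_1 \to -\infty$. The left side stabilizes to $m^-(q; T^-, T_0) + n^*$ by Definition \ref{dfn: Morse index}, while the first term on the right tends to $\mu(\hat{B}; \tau_0)$ by its definition. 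For the H\"ormander index, Proposition \ref{prop: Vus limit}(b) gives $V^u(\tau_1) \to V^+(J\hat{B}(-\infty))$, and the transversality $\vd \pitchfork V^u(\tau_0)$ lets me apply that proposition (with $W = \vd$) to conclude $\hat{\gamma}(\tau_1, \tau_0)\vd \to V^-(J\hat{B}(-\infty))$. Since the H\"ormander index is integer-valued and locally constant on the open stratum where intersection dimensions are preserved, it equals the limiting value $s(V^-(J\hat{B}(-\infty)), \vd;\, V^+(J\hat{B}(-\infty)), \vd)$.

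For part (c), I iterate the limiting procedure in the other direction, sending $\tau_0 \to +\infty$ in the identity from (b). The hypothesis $V^s(0) \pitchfork V^u(0)$ is preserved by the linear symplectic flow, so $V^u(\tau) \pitchfork V^s(\tau)$ for all $\tau$, and Proposition \ref{prop: Vus limit}(a) applied with $W = V^u(\tau_0)$ then yields $V^u(\tau) \to V^+(J\hat{B}(+\infty))$ as $\tau \to +\infty$. The assumption $\vd \pitchfork V^+(J\hat{B}(+\infty))$ therefore forces $V^u(\tau) \cap \vd = \{0\}$ for all sufficiently large $\tau$, which simultaneously shows that $\mu(\hat{B}; \tau_0)$ is eventually constant in $\tau_0$, that $\mu(\hat{B}; \rr)$ is finite, and that the right-hand side of the identity from (b) is independent of $\tau_0$. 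Combined with $m^-(q; T^-, T_0) \to m^-(q; T^-, T^+)$ from Definition \ref{dfn: Morse index}, the identity in (c) follows. The main obstacle, and the place I expect will require the most care, is justifying the stability of the H\"ormander index under the limits $\tau_1 \to -\infty$ and $\tau_0 \to +\infty$; the Grassmannian convergence is handed to us by Proposition \ref{prop: Vus limit}, but verifying that the converging quadruples stay off the jump strata of $s$ is exactly what the transversality hypotheses in (b) and (c) are engineered to guarantee.
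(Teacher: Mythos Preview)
Your proposal is correct and follows essentially the same approach as the paper: the same rectangle/homotopy argument for (a), the same use of Proposition~\ref{prop: Vus limit} together with the transversality hypotheses to pass to the limit $\tau_1 \to -\infty$ for (b), and the same use of $V^s(0)\pitchfork V^u(0)$ plus $\vd\pitchfork V^+(J\hat B(+\infty))$ to control the limit $\tau_0\to+\infty$ for (c). The only cosmetic difference is that in (c) you build on the already-established identity (b) and let $\tau_0\to+\infty$, whereas the paper returns to (a) and takes both limits simultaneously; your route is slightly tidier since the right-hand side of (b) is visibly independent of $\tau_0$.
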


\begin{proof}
(a). Let $\Lambda_{s}, s\in[0,1]$, be a continuous path of Lagrangian subspaces satisfying $\Lambda_{0}=V^{u}(\tau_{1})$ and $\Lambda_{1}=\vd$.
By the homotopy invariant property of the Maslov index,
\begin{align*}
\mu(\vd,\Lambda_{s};[0,1]) & +\mu(\vd,\hat{\gamma}(\tau,\tau_{1})\vd;[\tau_{1},\tau_{2}])\\
 &=\mu(\vd,V^{u}(\tau);[\tau_{1},\tau_{2}])+\mu(\vd,\hat{\gamma}(\tau_{2},\tau_{1})\Lambda_{s};[0,1]).
\end{align*}
Then
\begin{equation} \label{eq: hormand1}
\begin{split}
\mu(\vd,\hat{\gamma}(\tau,\tau_{1})\vd; & \; [\tau_{1},\tau_{2}])-\mu(\vd,V^{u}(\tau);[\tau_{1},\tau_{2}])\\
&=\mu(\vd,\hat{\gamma}(\tau_{2},\tau_{1})\Lambda_{s};[0,1])-\mu(\vd,\Lambda_{s};[0,1])\\
&=\mu(\hat{\gamma}(\tau_{1},\tau_{2})\vd,\Lambda_{s};[0,1])-\mu(\vd,\Lambda_{s};[0,1])\\
&=s(\hat{\gamma}(\tau_{1},\tau_{2})\vd,\vd; V^{u}(\tau_{1}),\vd).
\end{split}
\end{equation}

(b). For any $\tau_- < \tau_0$. Let $t_-= t(\tau_-)$. By \eqref{Mor-Mas} and Lemma \ref{lem: Maslov index t tau},
$$ m^-(q; t_-, T_0)+ n^* = \mu(\vd,\hat{\gamma}(\tau,\tau_{-})\vd; \; [\tau_-, \tau_{0}]). $$
Then Proposition \ref{lem: Maslov index t tau} and the monotone property of Morse index imply
\begin{equation}
\label{eq: mor mas lim tau0}
 m^-(q; T^-, T_0) + n^* =  \lim_{\tau_- \to -\infty}  \mu(\vd,\hat{\gamma}(\tau,\tau_{-})\vd; \; [\tau_{-},\tau_{0}]).
\end{equation}
Since $\vd\pitchfork V^u(\tau_0)$, then $\lim_{\tau\to-\infty}\hat{\gamma}(\tau,\tau_0)\vd=V^-(J\hat{B}(-\infty)) $.
Then
\begin{equation} \label{eq: mas typ lim tau0}
\begin{split}
 \lim_{\tau \to -\infty} & s(\hat{\gm}(\tau, \tau_0)\vd, \vd; V^u(\tau), \vd) \\
& = s(V^{-}(J\hat{B}(-\infty)),\vd; V^{+}(J\hat{B}(-\infty)),\vd).
\end{split}
\end{equation}
Meanwhile $\vd\pitchfork V^+(J\hat{B}(-\infty))$ implies
\begin{equation}
\label{hatbtau0} \mu(\hat{B}; \tau_0) = \lim_{\tau_- \to -\infty}  \mu(\vd, V^u(\tau); \; [\tau_{-},\tau_{0}]).
\end{equation}
\eqref{eq: two Mas horm tau0} now follows from \eqref{eq: mor mas lim tau0}, \eqref{eq: mas typ lim tau0} and \eqref{hatbtau0}.

(c). Since $V^{s}(0)\pitchfork V^{u}(0)$, by Proposition \ref{prop: Vus limit},
\begin{equation}
\label{eq: lim Vu} \lim_{\tau \to -\infty} V^u(\tau) = V^+(J\bh(-\infty)), \;\; \lim_{\tau \to +\infty} V^u(\tau) = V^+(J\bh(+\infty)).
\end{equation}
This means there exist $\tau_1 <0$ small enough and $\tau_2 >0 $ large enough, such that
$$ \vd \pitchfork V^u(\tau), \;\; \forall \tau \in (-\infty, \tau_1] \cup [\tau_2, +\infty). $$
Hence for any $\tau'_1 \le \tau_1$ and $\tau'_2 \ge \tau_2$, $\mu(\vd,V^{u}(\tau);[\tau'_{1},\tau'_{2}])$ is a finite constant and
\begin{equation}
\label{eq: mu bh finite} \mu(\bh; \rr)=\mu(\vd,V^{u}(\tau);\mathbb{R})=\mu(\vd,V^{u}(\tau);[\tau'_{1},\tau'_{2}]).
\end{equation}
This implies $\mu(\bh; \rr)$ must be finite.

Let $t_i= t(\tau_i)$, $i =1, 2$. By \eqref{Mor-Mas} and Lemma \ref{lem: Maslov index t tau},
$$ m^-(q; t_1, t_2)+ n^* = \mu(\vd,\hat{\gamma}(\tau,\tau_{1})\vd; \; [\tau_{1},\tau_{2}]). $$
Then Proposition \ref{lem: Maslov index t tau} and the monotone property of Morse index imply
\begin{equation}
\label{eq: mor mas lim}
 m^-(q; T^-, T^+) + n^* = \lim_{\tau_2 \to +\infty} \lim_{\tau_1 \to -\infty}  \mu(\vd,\hat{\gamma}(\tau,\tau_{1})\vd; \; [\tau_{1},\tau_{2}]).
\end{equation}

Since $\vd \pitchfork V^+(J\bh(+\infty))$, the second identity in \eqref{eq: lim Vu} implies $\vd \pitchfork V^u(\tau'_2)$, $ \forall \tau'_2 \ge \tau_2$, when $\tau_2$ is large enough. Then by Proposition \ref{prop: Vus limit},
$$ \lim_{\tau_1 \to -\infty} \hat{\gm}(\tau_1, \tau'_2) \vd = V^-(J\bh(-\infty)), \;\; \forall \tau'_2 \ge \tau_2. $$
As a result, for $\tau_2$ large enough,
\begin{equation} \label{eq: mas typ lim}
\begin{split}
\lim_{\tau_2 \to +\infty} \lim_{\tau_1 \to -\infty} & s(\hat{\gm}(\tau_1, \tau_2)\vd, \vd; V^u(\tau_1), \vd) \\
& = s(V^{-}(J\hat{B}(-\infty)),\vd; V^{+}(J\hat{B}(-\infty)),\vd)
\end{split}
\end{equation}
Combining \eqref{eq: hormand1}, \eqref{eq: mu bh finite}, \eqref{eq: mor mas lim} and \eqref{eq: mas typ lim}, we get the desired identity.
\end{proof}

The identity \eqref{eq: two Mas horm} gives the difference between the two different Maslov indices as the H\"{o}rmander index. The next lemma will be useful in computing the H\"{o}rmander index.

\begin{lem}\label{lem:Homand}
Given a matrix $B= \text{diag}(1, b)$ with $b<0$.
\begin{equation*}
V^{\pm}(JB)= \left\langle \left(
     \begin{array}{c}
       \pm\sqrt{-b} \\
       1 \\
     \end{array}
   \right) \right\rangle \pitchfork \vd
\end{equation*}
and
\begin{equation*}
s \left(V^{-}(JB), \vd; V^{+}(JB),  \vd \right) =1.
\end{equation*}
\end{lem}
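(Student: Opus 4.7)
The plan is to identify $V^{\pm}(JB)$ by direct eigenvalue computation, verify transversality to $\vd$, and then evaluate the H\"ormander index by choosing a convenient rotation path in $Lag(2)$, which is topologically a circle.

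First, $JB=\left(\begin{smallmatrix}0&-b\\1&0\end{smallmatrix}\right)$ has characteristic polynomial $\lmd^{2}+b$; since $b<0$, the eigenvalues are $\pm\sqrt{-b}$, and solving $(JB\mp\sqrt{-b}\,I)v=0$ immediately yields the eigenvectors $(\pm\sqrt{-b},1)^{T}$. Both have nonzero second coordinate, so $V^{\pm}(JB)\pitchfork\vd$, which gives the transversality part of the Lemma.

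For the H\"ormander index, by the identity recalled in the excerpt,
\[
s(V^{-}(JB),\vd;V^{+}(JB),\vd)=\mu(V^{-}(JB),\Lmd(s))-\mu(\vd,\Lmd(s)),
\]
for any path $\Lmd(s)$ from $V^{+}(JB)$ to $\vd$. I would take the straight rotation $\Lmd(s)=\lan(\cos\tht(s),\sin\tht(s))^{T}\ran$ with $\tht(s)=(1-s)\tht_{+}$, where $\tht_{+}=\arctan(1/\sqrt{-b})\in(0,\pi/2)$ is the angle of $V^{+}(JB)$, so that $\tht$ decreases monotonically to $0$. The angle of $V^{-}(JB)$ is $\pi-\tht_{+}\in(\pi/2,\pi)$, which is never reached by $\Lmd(s)$, hence $\mu(V^{-}(JB),\Lmd)=0$. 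The angle of $\vd$ is $0$, reached only at the final endpoint $s=1$. Taking $\vn$ as the transverse reference Lagrangian and $v=(1,0)^{T}$, one writes $\Lmd(s)=\lan(1,\tan\tht(s))^{T}\ran$ near $s=1$, so $w(s)=(0,\tan\tht(s))^{T}\in\vn$ and $\om(v,w(s))=\tan\tht(s)$. Differentiating at $s=1$ gives the crossing form $Q(v)=\dot\tht(1)=-\tht_{+}<0$; hence $m^{-}=1$, and by the Liu--Zhang formula quoted in the excerpt this endpoint contributes $-1$ to the Maslov index. Thus $\mu(\vd,\Lmd)=-1$, and the H\"ormander index equals $0-(-1)=1$.

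The only subtle point is the asymmetric $m^{+}$ versus $-m^{-}$ convention for initial versus final endpoint crossings in the Liu--Zhang formula; once the direction of the path and the sign of the crossing form are tracked consistently, everything else reduces to a one-dimensional check.
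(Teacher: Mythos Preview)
Your proof is correct and follows essentially the same approach as the paper: choose a path $\Lambda$ in $Lag(2)$ from $V^{+}(JB)$ to $\vd$, show it never meets $V^{-}(JB)$, and show the only crossing with $\vd$ occurs at the right endpoint with negative crossing form, giving $\mu(V^{-}(JB),\Lambda)=0$ and $\mu(\vd,\Lambda)=-1$. The paper takes the path $\Lambda_t=\langle(\sqrt{-b},1-t)^T\rangle$ rather than your rotation, but the computation is otherwise identical; your version spells out the crossing form more explicitly.
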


\begin{proof} Let $\Lambda_t=\left\langle \left(
     \begin{array}{c}
       \sqrt{-b} \\
       1-t \\
     \end{array}
   \right) \right\rangle $,  $t\in[0,1]$, then $\Lambda_0=V^+(JB)  $,  $\Lambda_1=\vd$.
 The result follows from the facts that $\mu(V^-(JB),\Lambda_t )=0$ and $\mu(\Lambda_1,\Lambda_t )=-1$.

\end{proof}


Now we are ready to prove property (b) in Theorem \ref{thm 1 singular}, which follows directly from next theorem.
\begin{thm}
\label{thm: hyper}
Let $q \in C^2(T^-, T^+), \hat{\X})$ be a doubly asymptotic solution satisfying the conditions given in property (b) of Theorem \ref{thm 1 singular}.
\begin{enumerate}
\item[(a).] For any $\tau_0 \in \rr$, let $T_0=t(\tau_0)$ be the corresponding Newtonian time. If $\vd\pitchfork V^u(\tau_0)$,    then
\begin{equation} \label{eq: m-=mutau0}
m^-(q; T^-, T_0)=\mu(\hat{B}; \tau_0).
\end{equation}
Moreover if $V^{s}(0)\pitchfork V^{u}(0)$, then
\begin{equation} \label{eq: m-=mu}
m^-(q; T^-, T^+)=\mu(\hat{B}; \rr).
\end{equation}
\item[(b).] $m^-(q; T^-, T^+)$ is finite.
\end{enumerate}
\end{thm}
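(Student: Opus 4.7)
The plan is to identify both parts of the theorem as consequences of Lemma \ref{lem: mas rel}, so the main task reduces to two things: verifying the transversality hypotheses in that lemma, and computing the H\"{o}rmander index
\[ s\big(V^-(J\hat{B}(-\infty)),\vd;\,V^+(J\hat{B}(-\infty)),\vd\big). \]
The claim is that this index equals exactly $n^*$, so that the correction in \eqref{eq: two Mas horm tau0} and \eqref{eq: two Mas horm} cancels the $n^*$ on the left, yielding \eqref{eq: m-=mutau0} and \eqref{eq: m-=mu}.

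To compute the H\"{o}rmander index, I would exploit the block structure of $\hat{B}(\pm\infty)$ established in Section~\ref{sec: McGehee coordinates}. In both the strict non-spiral McGehee case and the hyperbolic McGehee case, $\hat{B}(\pm\infty)=\hat{B}_1(\pm\infty)\diamond\hat{B}_2(\pm\infty)$. Applying the symplectic similarity from Lemma \ref{lem: symplectic similarity} to $\hat{B}_1$ (with $Q=-\tfrac{3}{4}v^*$ or $-\tfrac{1}{2}v^*$) and, for $\hat{B}_2$, first conjugating by $A_d=\mathrm{diag}(A^T,A^{-1})$ and $C_d=\mathrm{diag}(C^T,C^{-1})$ (as in the proof of Proposition \ref{prop: hyp type I}) and then by the block triangular matrix from Lemma \ref{lem: symplectic similarity}, the matrix $J\hat{B}(\pm\infty)$ becomes symplectically conjugate to a direct sum $\bigoplus_{i=1}^{n^*} J\,\mathrm{diag}(1,b_i)$, where $b_i<0$ precisely by the strict non-spiral condition (or by $H_0>0$ in the hyperbolic case): explicitly $b_1=-\tfrac{25}{8}U(x^*)$ (resp.~$-\tfrac{1}{4}(v^*)^2$) and $b_i=-(U(s^*)/8+\lambda_{i-1}(s^*))$ (resp.~$-\tfrac{1}{4}(v^*)^2$) for $i\ge 2$. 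Since each of the symplectic matrices used in the reduction is block upper triangular or block diagonal of the form $\mathrm{diag}(X^T,X^{-1})$, each preserves $\vd$, so the H\"{o}rmander index decomposes, by the symplectic additivity of the Maslov index (Property V), into the sum of the H\"{o}rmander indices of the $n^*$ two-dimensional blocks. By Lemma \ref{lem:Homand} each summand equals $1$, giving the desired total $n^*$. The same reduction shows $V^{\pm}(J\hat{B}(\pm\infty))\pitchfork\vd$, so the transversality hypotheses $\vd\pitchfork V^+(J\hat{B}(\pm\infty))$ of Lemma \ref{lem: mas rel} are automatic; the remaining hypothesis $\vd\pitchfork V^u(\tau_0)$ (resp.~$V^s(0)\pitchfork V^u(0)$) is assumed in (a). This completes part (a).

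For part (b), under $V^s(0)\pitchfork V^u(0)$ the Maslov index $\mu(\hat{B};\mathbb{R})$ is finite by Lemma \ref{lem: mas rel}(c), and so part (a) gives finiteness of $m^-(q;T^-,T^+)$. To handle the possibly degenerate case where $V^s(0)\cap V^u(0)\ne\{0\}$, I would perturb $\hat{B}$ to $\hat{B}+\varepsilon K$ for a small positive $\varepsilon$ and a fixed non-negative matrix $K$ supported on a compact interval that makes the perturbed system non-degenerate while leaving the hyperbolic limits unchanged. By the monotone property of the Maslov index, Property VII, the Morse indices of $q$ and of the perturbed system differ by at most $2n^*$ (the maximal jump produced by the compactly supported perturbation, bounded by $\nu(\hat{B})\le 2n^*$ from the dimension of the unbounded Fredholm operator's kernel). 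Since the perturbed Morse index is finite by the previous step, so is the original.

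The main obstacle is Step~2, the symplectic reduction: one must keep careful track of the fact that every reduction matrix preserves $\vd$ so that the symplectic additivity of Maslov (and hence H\"{o}rmander) indices applies cleanly, and one must verify that the resulting diagonal entries $b_i$ really are negative in all three regimes (total collision with strict non-spiral, parabolic infinity with strict non-spiral, hyperbolic infinity)---which is exactly where the strict non-spiral condition and the hyperbolicity of $H_0>0$ enter. Once this is in hand, the rest is an application of Lemma \ref{lem: mas rel}, Lemma \ref{lem:Homand}, and the monotonicity of the Maslov index.
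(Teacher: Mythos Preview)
Your argument for part~(a) is essentially the paper's own proof: the authors also conjugate $J\hat{B}(-\infty)$ by an explicit symplectic matrix $R=R_1\diamond(R_3R_2)$ that preserves $\vd$ (built from the upper-triangular block of Lemma~\ref{lem: symplectic similarity} and the diagonalizing matrices $A,C$), reduce to an $n^*$-fold symplectic sum of $2\times2$ blocks $\mathrm{diag}(1,b_i)$ with each $b_i<0$, apply Lemma~\ref{lem:Homand} to get the H\"{o}rmander index equal to $n^*$, and then invoke Lemma~\ref{lem: mas rel}(b),(c). Your identification of the specific $b_i$ in both regimes is correct.

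For part~(b), your approach is genuinely different from the paper's, and the step ``choose a compactly supported non-negative $K$ making the perturbed system non-degenerate'' is asserted without proof. It can be justified (e.g.\ take $K$ positive definite on a neighborhood of $0$; then $\langle K\xi,\xi\rangle>0$ for every nonzero element $\xi$ of the finite-dimensional kernel of $-J\frac{d}{d\tau}-\hat{B}$, and analytic perturbation of the Fredholm operator moves the zero eigenvalues off zero for small $\varepsilon>0$), after which monotonicity gives $m^-(q;T^-,T^+)\le m^-_{\varepsilon}<\infty$ directly---your $2n^*$ bound on the difference is not needed. The paper instead avoids perturbation entirely: it picks a Lagrangian $V$ transversal to both $V^u(0)$ and $V^s(0)$ (which always exists), replaces $V^u(\tau)$ by $\hat{\gamma}(\tau,0)V$, and uses Proposition~\ref{prop: Vus limit} to conclude that $\mu(\vd,\hat{\gamma}(\tau,0)V;[\tau_1,\tau_2])$ stabilizes as $\tau_1\to-\infty$, $\tau_2\to+\infty$; the H\"{o}rmander bound \eqref{hormd} then controls the discrepancy with $\mu(\vd,\hat{\gamma}(\tau,\tau_1)\vd;[\tau_1,\tau_2])$. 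The paper's route is shorter and avoids the extra analytic input on genericity of non-degeneracy; your route is also valid once that input is supplied.
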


\begin{proof}
(a). We will prove \eqref{eq: m-=mutau0} first. Depending the property of $q(T^-)$, different types McGehee coordinates need to be used. First let's assume $q(T^-)$ is a total collision or a parabolic infinity. Set $\tau^*=-\infty$. Following the notations from Section \ref{sec: McGehee coordinates}, $s^* = \lim_{\tau \to \tau^*}s(t)$ is a central configuration, $x^* = \psi(s^*)$, and $\hat{B}(\tau^{*})=\hat{B}_{1}(\tau^*)\diamond\hat{B}_{2}(\tau^*)$ with $\hat{B}_{1}(\tau^*), \hat{B}_{2}(\tau^*)$ given as in \eqref{eq: B hat 1 2 type I}. Recall that Proposition \ref{prop: hyp type I} shows $J\hat{B}(\tau^*)$ is a hyperbolic matrix.

By Lemma \ref{lem: eigenvalue U hat}, there exists a matrix $A$, such that $A^{T}\Mh_*A=I$ and the eigenvalues of $A^T \U_{xx}(x^*) A$ are $\lmd_i(s^*)$, $i =1, \cdots, n^*-1$. We can further find an orthogonal matrix $C$, such that
$$ C^{T}(A^{T}\hat{U}_{xx}(x^*)A)C=\text{diag}(\lmd_{1}(s^*),\cdots,\lmd_{n^{*}-1}(s^*)). $$
Set $R=R_{1}\diamond (R_{3}R_{2})$ with
$$ R_{1}=\left(
         \begin{array}{cc}
           1 & -\frac{3}{4}v^{*} \\
           0 & 1 \\
         \end{array}
       \right), \;\;
R_{2}= \left(
\begin{array}{cc}
           C^{T}A^{T} & 0 \\
           0 & C^{-1}A^{-1}\\
\end{array}
\right),
$$
$$
R_{3}=\left(
            \begin{array}{cc}
                1 & \frac{1}{4}v^{*} \\
                0 & 1 \\
                \end{array}
                \right)\diamond\cdots\diamond
                \left(
                      \begin{array}{cc}
                            1 & \frac{1}{4}v^{*} \\
                            0 & 1 \\
                            \end{array}
                                       \right),
                $$
where $R_2$ and $R_3$ are $2(n^*-1) \times 2(n^*-1)$ matrices. According to \eqref{eq: phi R B},
\begin{align*}
\Phi_{R}(\hat{B})(\tau^{*}) =\left(
                              \begin{array}{cc}
                                1 & 0 \\
                                0 & -\frac{25}{8}U(s^{*}) \\
                              \end{array}
                            \right) & \diamond
                            \left(
                              \begin{array}{cc}
                                1 & 0 \\
                                0 & -\frac{U(s^*)}{8}-\lmd_{1}(s^*) \\
                              \end{array}
                            \right)\diamond \\
                            & \cdots\diamond
                            \left(
                              \begin{array}{cc}
                                1 & 0 \\
                                0 & -\frac{U(s^*)}{8}-\lmd_{n^*-1}(s^*)\\
                              \end{array}
                            \right).
\end{align*}

Recall that the strict non-spiral condition implies
$$ -\frac{U(s^*)}{8}-\lmd_{i}(s^*)<0, \;\; 1\leq i\leq n^{*}-1. $$
Combining these with Lemma \ref{lem:Homand}, we get
\begin{equation}
s(V^{-}(J\Phi_{R}(\hat{B})(\tau^*)), \vd, V^{+}(J\Phi_{R}(\hat{B})(\tau^*)), \vd)=n^{*}.
\end{equation}
Direct computations show
$$ R\vd=\vd, \;\; RV^{\pm}(J\bh(\tau^*)) = V^{\pm}(J \Phi_R(\bh)(\tau^*)). $$
Therefore
\begin{align*}
s(V^{-}(J\hat{B}(\tau^*)), \vd; & \; V^{+}(J\hat{B}(\tau^*)), \vd)\\
			  & =s(RV^{-}(J\hat{B}(\tau^*)), R\vd; RV^{+}(J\hat{B}(\tau^*)), R\vd)\\
              & =s(V^{-}(J\Phi_{R}(\hat{B})(\tau^*)), \vd; V^{+}(J\Phi_{R}(\hat{B})(\tau^*)), \vd)=n^{*}
\end{align*}
Together with   \eqref{eq: two Mas horm tau0}, they imply \eqref{eq: m-=mutau0}.

To prove \eqref{eq: m-=mu},  let's assume $V^{s}(0)\pitchfork V^{u}(0)$, combining with \eqref{eq: two Mas horm}, they imply the desired result.

The proof is exact the same, when $q(T^-)$ is a hyperbolic infinity. The only difference is in this case $\hat{B}_{1}(\tau^*), \hat{B}_{2}(\tau^*)$ are given by (\ref{eq: bh 1 2 type II}), and
$$ R_{1}=\left(
         \begin{array}{cc}
           1 & -\frac{1}{2}v^{*} \\
           0 & 1 \\
         \end{array}
       \right), \;
       R_{2}=\left(
         \begin{array}{cc}
           A^T & 0 \\
           0 & A^{-1} \\
         \end{array}
       \right), \;
                              R_{3}=\left(
                                                   \begin{array}{cc}
                                                     1 & \frac{1}{2}v^{*} \\
                                                     0 & 1 \\
                                                   \end{array}
                                                 \right)\diamond\cdots\diamond
                                                 \left(
                                                   \begin{array}{cc}
                                                     1 & \frac{1}{2}v^{*} \\
                                                     0 & 1 \\
                                                   \end{array}
                                                 \right).
                                                 $$
This finishes our proof of property (a).

(b). Now let's drop the assume that $V^{s}(0)\pitchfork V^{u}(0)$. In this case, we can always find another Lagrange subspace $V$ satisfying $V\pitchfork V^{u}(0)$ and $V\pitchfork V^{s}(0)$. By Proposition \ref{prop: Vus limit}, this implies
\begin{equation} \label{eq: lim gm V}
\lim_{\tau\rightarrow+\infty}\hat{\gamma}(\tau,0)V=V^{+}(J\hat{B}(+\infty)), \lim_{\tau\rightarrow-\infty}\hat{\gamma}(\tau,0)V=V^{-}(J\hat{B}(-\infty)).
\end{equation}
After replacing $V^u(\tau)$ by $\hat{\gm}(\tau, 0)V$, a similar argument as in the proof of property (a) of Lemma \ref{lem: mas rel} shows, for any $\tau_1 < \tau_2$,
\begin{equation*}
\begin{split}
\mu(\vd,\hat{\gamma}(\tau, \tau_{1})\vd; & [\tau_{1},\tau_{2}]) \\
& = \mu(\vd,\hat{\gamma}(\tau,0)V;[\tau_{1},\tau_{2}]) + s(\hat{\gamma}(\tau_{1},\tau_{2})\vd,\vd; \hat{\gamma}(\tau_{1},0)V,\vd).
\end{split}
\end{equation*}

Meanwhile $\vd\pitchfork V^{+}(J\hat{B}(+\infty)), \vd\pitchfork V^{-}(J\hat{B}(-\infty))$. With \eqref{eq: lim gm V}, they imply
$$ \lim_{\tau_2 \to +\infty} \lim_{\tau_1 \to \infty} \mu(\vd,\hat{\gamma}(\tau,0)V;[\tau_{1},\tau_{2}]) < +\infty. $$
Since the H\"{o}rmander index $s(\hat{\gamma}(\tau_{1},\tau_{2})\vd,\vd; \hat{\gamma}(\tau_{1},0)V,\vd)$ is also bounded \eqref{hormd},   together with the monotonicity of $\mu(\vd,\hat{\gamma}(\tau,\tau_{1})\vd;[\tau_{1},\tau_{2}])$, they imply
$$  \lim_{\tau_2 \to +\infty} \lim_{\tau_1 \to \infty} \mu(\vd,\hat{\gamma}(\tau,\tau_{1})\vd;[\tau_{1},\tau_{2}]) < +\infty. $$
Then the finiteness of $m^-(q; T^-, T^+)$ follows from \eqref{eq: mor mas lim}.
\end{proof}

\section{the Morse indices of homothetic solutions} \label{sec: homo index}

In this section, we give a proof of Theorem \ref{thm 2 homo}. Throughout the entire section, we assume $q \in C^2((T^-, T^+), \hat{\X})$ is a homothetic solution with energy $H_0$. As a result, there is a normalized central configuration $s_0$, such that
$$ s(t) = q(t)/ \sqrt{\I(q(t))} \equiv s_0, \forall t \in (T^-, T^+).$$

When $H_0 <0$, the homothetic solution starts from a total collision at a finite time and comes back to another total collision at a different time. When $H_0 \ge 0$, the homothetic solution either starts from a total collision at a finite time and goes to infinity, as time goes to positive infinite, or it ends at the total collision at a finite time and goes to infinity, as time goes to negative infinity. However a simple time reversal changes one to the other. Without loss of generality, in the rest of the section we assume $q(T^-)$ is a total collision with a finite $T^- \in \rr$. Then $T^+ \in \rr$ is finite, when $H_0 <0$ and $T^+= +\infty$, when $H_0 \ge 0$.

To compute the index of $q(t)$, we rewrite it in McGehee coordinates $(v, u, r, x)(\tau)$ with time parameter $\tau$ as defined in subsection \ref{subsec: McGehee coordinates}. Although when the energy $H_0$ is positive, we have a hyperbolic infinity, as $t$ goes to $T^+=+\infty$, we will not switch to the hyperbolic McGehee coordinates, but continue to use the McGehee coordinate.
Because of this, when $H_0>0$, the corresponding time $\tau=\tau(t)$ goes to some finite $\tau^+$, when $t$ goes to $T^+= +\infty$.

Since $q(t)$ is a homothetic solution, $x(\tau) \equiv x_0 = \psi(s_0)$ and $u(\tau) \equiv 0$, for all $\tau$. Then the second and fourth equations in \eqref{eq: McGehee 1.1} are always zero on both sides, and the first and third equations become
\begin{equation}
\begin{cases}
v'&=\frac{1}{2}v^2-b, \\
r'&=rv
\end{cases} \label{eq: McGehee 2}
\end{equation}
where $b = \U(x_0)$ through out this section. The energy identity \eqref{eq: energy M1} now reads
\begin{equation}
\label{eq: energy homothetic} \frac{1}{2}v^2-b=rH_0.
\end{equation}

Moreover \eqref{eq: hat B tau} now becomes $\hat{B}(\tau)=\hat{B}_1(\tau)\diamond \hat{B}_2(\tau) $ with
\begin{equation}
\label{eq: B hat 1 2} \hat{B}_1(\tau)=\left(\begin{array}{cc}1 & -\frac{3}{4}v\\
                             -\frac{3}{4}v&-2b\end{array}\right), \,   \hat{B}_2(\tau)=\left(\begin{array}{cc}\hat{M}^{-1} & \frac{1}{4}vI\\
                             \frac{1}{4}vI&-\U_{xx}(x_0)\end{array}\right).
\end{equation}
Here and in the rest of the section, we set
$$ \hat{M}= \hat{M}(x_0)= \left. \left( \frac{\partial \psi^{-1}}{\partial x} \right) \right|_{x= x_0}^T M \left. \left( \frac{\partial \psi^{-1}}{\partial x} \right) \right|_{x= x_0}. $$
Let $V^u_1, V^u_2$ and $V^u$ be the unstable subspaces of $\hat{B}_1, \hat{B}_2$ and $\bh$ respectively according to Definition \ref{dfn: stable subspace}. Then
$V^u=V^u_1\oplus V^u_2$.  By the Symplectic additivity of Maslov index,
we have
\bea  \mu(\hat{B}; \tau_0) =\mu(\hat{B}_1; \tau_0) +\mu(\hat{B}_2; \tau_0), \label{Vutau0}   \eea
When both limits $\lim_{\tau\to+\infty}V^u_i(\tau), i=1,2$, exist.  After passing $\tau_0$ to $+\infty$, we get
\bea  \mu(\hat{B};\rr) =\mu(\hat{B}_1;\rr)+\mu(\hat{B}_2;\rr). \label{Vuinfty}   \eea

Let $R_1(\tau)=\left(\begin{array}{cc}1 & -\frac{3}{4}v(\tau)\\
0&1\end{array}\right)$.
By Lemma \ref{lem: phi R B},  $\xi_1(\tau)$ satisfies $\xi'_1(\tau)=J\hat{B}_1(\tau)\xi_1(\tau)$ if and only if $\eta_1(\tau)=R_1(\tau)\xi_1(\tau)$ satisfies
\begin{equation} \label{eq: R1 Bhat1}
 \eta'_1(\tau) = J \Phi_{R_1}(\bh_1)(\tau) \eta_1(\tau),
\end{equation}
where according to \eqref{eq: phi R B},
\begin{equation}
\label{eq: R1} \Phi_{R_1}(\hat{B}_1)=\left(\begin{array}{cc}1 & 0\\
    0& \frac{3}{4}v'-\frac{9}{16}v^2-2b   \end{array}\right) =\left(\begin{array}{cc}1 & 0\\
    0& -\frac{3}{11}v^2 -\frac{11}{4}b   \end{array}\right).
\end{equation}

Next lemma will be useful in computing $\mu(\vd, V^u(\tau), (-\infty,\tau_0])$.

\begin{lem}{\cite[Lemma 3.10]{HO}} \label{lem:nondegen} If $y(\tau)$, $\tau \in \rr$, satisfies
\bea\label{61}
y^{\prime\prime}(\tau)=f(\tau)y(\tau),
\eea
where $f \in C^0(\rr, [0, +\infty))$ satisfies $\int_{\tau_1}^{\tau_2}f(\tau)d\tau>0$ for any $\tau_1 < \tau_2$, the $\lim_{\tau\to\pm\infty}f(\tau)>0$. Then
\begin{enumerate}
\item[(a).] for any $\tau_{1}< \tau_{2}$, there is no non-trivial solution of \eqref{61}, which satisfies $y(\tau_{1})y^{\prime}(\tau_{1})\geq y(\tau_{2})y^{\prime}(\tau_{2})$;
\item[(b).] there is no non-trivial solution of \eqref{61} satisfying
$y(\tau_0)y^{\prime}(\tau_0)=0$ for some $\tau_0\in\mathbb{R}$ and $y(\tau)\rightarrow0, y^{\prime}(\tau)\rightarrow0$ as $\tau\rightarrow-\infty$ (or $\tau\to+\infty$);
\item[(c).] there is no non-trivial solution of \eqref{61}, which is bounded on $\mathbb{R}$.
\end{enumerate}
\end{lem}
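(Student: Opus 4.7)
The plan is to build everything around the monotone quantity
\[ E(\tau) := y(\tau)\, y'(\tau), \qquad E'(\tau) = y'(\tau)^2 + f(\tau)\, y(\tau)^2 \geq 0, \]
so $E$ is non-decreasing on $\mathbb{R}$. The two standing assumptions on $f$ (positive integral on every subinterval, and positive limits at $\pm\infty$) combine to guarantee a unique-continuation statement for the ODE: if $y'\equiv 0$ and $fy\equiv 0$ on some interval, then $y\equiv 0$ on that interval (since $f$ is not identically zero on any subinterval), and then $y\equiv 0$ globally by unique solvability of the linear Cauchy problem. This is the mechanism I will invoke whenever a monotone quantity is shown to be constant.

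For (a), monotonicity of $E$ gives $E(\tau_1)\leq E(\tau_2)$, so the hypothesis $y(\tau_1)y'(\tau_1)\geq y(\tau_2)y'(\tau_2)$ forces $E$ to be constant on $[\tau_1,\tau_2]$. Then $E'\equiv 0$ there, hence $y'\equiv 0$ and $fy\equiv 0$ on $[\tau_1,\tau_2]$, and the unique-continuation remark finishes the case. For (b), the decay $y,y'\to 0$ at $-\infty$ (say) gives $\lim_{\tau\to-\infty}E(\tau)=0$, while $y(\tau_0)y'(\tau_0)=0$ gives $E(\tau_0)=0$; by monotonicity $E\equiv 0$ on $(-\infty,\tau_0]$, and the same unique-continuation argument forces $y\equiv 0$. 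The case with decay at $+\infty$ is symmetric.

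For (c), the plan is to reduce to (b) by showing that a bounded solution on $\mathbb{R}$ must automatically have $y,y'\to 0$ at one of the infinities and possess a point where $yy'=0$. The key structural observation is that $y'' = fy$ with $f\geq 0$ makes $y$ convex on every interval where $y\geq 0$ and concave where $y\leq 0$. On a maximal interval $(\alpha,\beta)$ where $y>0$, I would argue as follows: if $(\alpha,\beta)$ is bounded with $y(\alpha)=y(\beta)=0$, convexity forces $y\leq 0$ inside, a contradiction, so the interval must be unbounded on at least one side; on such a half-line, a bounded convex function has a monotone derivative with $y'\to 0$, and $y$ converges to a limit $L\geq 0$, and $L>0$ is ruled out because it would force $y''\geq cL>0$ eventually and hence $y\to +\infty$. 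One then applies (b) with $\tau_0$ equal to the finite endpoint (where $y=0$) or to a critical point (which must exist if $y$ is not identically zero and decays at an end). The case $y\leq 0$ everywhere reduces to $-y$, and the case $y>0$ on all of $\mathbb{R}$ is handled by the remark that a convex bounded function on $\mathbb{R}$ is constant, contradicting $y''=fy$ with $f\not\equiv 0$ unless $y\equiv 0$.

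The main obstacle is keeping the sign analysis in (c) organized so that every logically possible behavior of $y$ is covered. In particular, one has to rule out a bounded solution that changes sign infinitely often near $\pm\infty$; this is what the positive limits $\lim_{\tau\to\pm\infty}f(\tau)>0$ are for, since near $\pm\infty$ the equation is a small perturbation of $y''=f_\pm y$ with $f_\pm>0$, whose solutions are combinations of $e^{\pm\sqrt{f_\pm}\,\tau}$ and therefore non-oscillatory. Everything else is a short computation built on the monotonicity of $E$.
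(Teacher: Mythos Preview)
Your approach is exactly the paper's: the identity $E'=(y')^2+fy^2\ge 0$ for $E=yy'$ is the differential form of ``multiply by $y$ and integrate by parts,'' and your arguments for (a) and (b) are correct. Your outline for (c) also works; just note that the convexity step already excludes any bounded maximal interval where $y>0$ (hence infinite sign changes), so the hypothesis $\lim_{\tau\to\pm\infty}f(\tau)>0$ enters only to rule out a nonzero limit $L$, not to prevent oscillation.
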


\begin{proof} The key idea of the proof is to multiply both sides of \eqref{61} by $y$, and then use integration by parts. For details see \cite[Lemma 3.10]{HO}.
\end{proof}

\begin{prop}\label{prop:B1}
Given a homothetic solution with arbitrary energy. For any $\tau_0 \in \rr$,
$$\mu(\hat{B}_1; \tau_0)=0.$$
\end{prop}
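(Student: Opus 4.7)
The plan is to conjugate the scalar system by $R_1$ and reduce the Maslov-index computation to a Sturm--Liouville-type zero count, then invoke Lemma~\ref{lem:nondegen} to rule out crossings.

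First I would use symplectic invariance of the Maslov index to transfer the problem to the gauged system \eqref{eq: R1 Bhat1}. Since $R_1(\tau)$ is upper triangular with determinant one, it is symplectic and preserves $\vd$. Moreover both $R_1(\tau)$ and $R_1^{-1}(\tau)$ remain bounded as $\tau \to -\infty$ because $v(\tau) \to \pm\sqrt{2U(s_0)}$ by Lemma~\ref{lem:masy1}; consequently $R_1(\tau)V^u_1(\tau)$ coincides with the unstable subspace of the gauged system, and
\[
\mu(\hat{B}_1;\tau_0) \;=\; \mu(\vd,\, R_1(\tau) V^u_1(\tau);\, (-\infty,\tau_0]).
\]

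Next I would read off the scalar ODE satisfied by the configuration component. Writing $\eta_1 = (a,c)^T$, the diagonal form $\Phi_{R_1}(\hat{B}_1) = \mathrm{diag}(1,g(\tau))$ from \eqref{eq: R1} together with Hamilton's equations gives $c' = a$ and $c'' = -g(\tau)\,c = f(\tau)\,c$, where $f(\tau) \ge \tfrac{11}{4}\,b > 0$ and, using $v(\tau)\to \pm\sqrt{2U(s_0)}$, the limit $f(-\infty)$ is strictly positive. These are precisely the hypotheses of Lemma~\ref{lem:nondegen}. Moreover, since $J\hat{B}_1(-\infty)$ has eigenvalues $\pm\frac{5}{2\sqrt{2}}\sqrt{U(s_0)}$ by the calculation in Proposition~\ref{prop: hyp type I}, its positive-eigenspace is one-dimensional, so $R_1(\tau)V^u_1(\tau)$ is one-dimensional for every $\tau$.

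Finally, a crossing at some $\tau_0$ would force a nontrivial solution of $c'' = f(\tau)c$ with $c(\tau_0) = 0$ (from the intersection with $\vd$) and $(c'(\sigma), c(\sigma)) \to (0,0)$ as $\sigma \to -\infty$ (from the unstable-subspace condition, since $\eta_1(\sigma) = (a(\sigma),c(\sigma)) = (c'(\sigma),c(\sigma))$ decays to zero). Part (b) of Lemma~\ref{lem:nondegen} forbids exactly such a solution, so there are no crossings on $(-\infty,\tau_0]$ and the Maslov index vanishes. The only real obstacle is bookkeeping---checking that the gauge $R_1$ preserves both $\vd$ and the unstable-direction asymptotics---and once that is in place the conclusion reduces to a single application of the zero-counting lemma already established in \cite{HO}.
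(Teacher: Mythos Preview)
Your proposal is correct and follows essentially the same approach as the paper: gauge by $R_1$, use $R_1\vd=\vd$ and symplectic invariance to transfer the index to the diagonalized system \eqref{eq: R1}, reduce to the scalar equation $c''=f(\tau)c$ with $f>0$, and then apply part~(b) of Lemma~\ref{lem:nondegen} to rule out crossings with $\vd$. The only differences are cosmetic: you are a bit more explicit than the paper about why $R_1$ carries the unstable subspace to the unstable subspace (boundedness of $R_1^{\pm 1}$ at $-\infty$) and why $V^u_1$ is one-dimensional.
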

\begin{proof}
Let $V^u(\tau)$, $\tilde{V}^u(\tau)$ be the unstable subspaces of the linear Hamiltonian system associated to $\hat{B}_1$ and  $\Phi_{R_1}(\hat{B}_1)$ respectively.
By  Lemma \ref{lem: phi R B},
$ \tilde{V}^u(\tau)=R_1V^u(\tau)  $.   Since $R_1\vd=\vd$, the symplectic invariant property of Maslov index implies
\bea  \mu(\hat{B}_1; \tau_0)=\mu(\vd, V^u(\tau);(\-\infty, \tau_0]) =\mu(\vd, \tilde{V}^u(\tau); (\-\infty, \tau_0]).  \eea
Hence it is enough to show $\mu(\vd, \tilde{V}^u(\tau); (\-\infty, \tau_0])=0$.

Since $\tilde{V}^u(\tau)$ is a 1-dim space. There exist $\xi(\tau) =(y(\tau), x(\tau))^T$, such that $\tilde{V}^u= \langle \xi(\tau) \rangle$. Then $\xi(\tau)$ satisfies \eqref{eq: R1 Bhat1} and $\lim_{\tau \to -\infty} \xi(\tau)=0$, which implies
$$ x''(\tau)=(\frac{3}{11}v^2(\tau) +\frac{11}{4}b)x(\tau) \; \text{ and } \; \lim_{\tau \to -\infty} x(\tau)=0. $$
Since $\frac{3}{11}v^2 +\frac{11}{4}b>\frac{11}{4}b$, from property (b) of Lemma \ref{lem:nondegen}, we have $x(\tau) \ne 0$, $ \forall \tau\in\rr$. This implies that there is no crossing occur for $\vd$, which gives us the desired result.
\end{proof}

Now we show how to compute $\mu(\hat{B}_2,\rr)$.
Recall that we can find a matrix $A$ such that $A^T\hat{M}A=I$ and
  \[\Phi_{A_d}(\hat{B}_{2}(\tau) ) = \left(\begin{array}{cc}I & \frac{1}{4}vI\\
                             \frac{1}{4}vI&-A^T\U_{xx}(x_0)A\end{array}\right), \]
where  $A_d=diag(A^{-T},A)$. Let $\lmd_i$, $i=1, \dots, n^*-1$, represent the eigenvalues of  $M^{-1}D^2U|_{\mathcal{E}}(s_0)$. By Lemma \ref{lem: eigenvalue U hat}, they are the eigenvalues of $A^T\U_{xx}(x_0) A$ as well.
Then after a change of basis, we have
\[ \left(\begin{array}{cc}I & \frac{1}{4}vI\\
                             \frac{1}{4}vI&-A^T\U_{xx}(x_0)A\end{array}\right)=\left(\begin{array}{cc}1 & \frac{1}{4}v\\
                             \frac{1}{4}v&-\lmd_1\end{array}\right)\diamond\cdots\diamond \left(\begin{array}{cc}1 & \frac{1}{4}v\\
                             \frac{1}{4}v&-\lmd_{n^*-1}\end{array}\right). \]
By the  symplectic additivity of Maslov index,  we have
\begin{equation}
\label{eq: MasSum B2}  \mu(\hat{B}_2; \tau_0 ) = \sum_{i =1}^{n^*-1} \mu(\mathcal{B}_{\lambda_i};  \tau_0).
\end{equation}

To simplify notation, let $\lmd$ represent any $\lmd_i$, $i =1, \dots, n^*-1$, and $V^u_\lambda(\tau)$ the unstable subspaces of the linear Hamiltonian system
\begin{equation}
\label{eq: B hat lmd} \eta'(\tau) = J \mathcal{B}_{\lmd}(\tau) \eta(\tau), \; \text{ where }
                             \mathcal{B}_{\lmd}=  \left(\begin{array}{cc}1 & \frac{1}{4}v\\
                             \frac{1}{4}v&-\lmd\end{array}\right).
\end{equation}


Then for $R_2(\tau)=\left(\begin{array}{cc}1 & \frac{1}{4}v(\tau)\\
0&1\end{array}\right)$,
\begin{equation}
\label{eq: R2lambda} \Phi_{R_2}(\mathcal{B}_\lambda)=\left(\begin{array}{cc}1 & 0\\
    0& -\frac{3}{16}v^2+\frac{b}{4}-\lambda   \end{array}\right) =\left(\begin{array}{cc}1 & 0\\
    0& -\frac{3}{8}rH_0 -\frac{b}{8}-\lambda   \end{array}\right).
\end{equation}
Here we have used the energy relation \eqref{eq: energy homothetic}.
Let $\tilde{V}^u_\lambda(\tau)$ be the unstable subspace of the linear Hamiltonian system
 $\eta'(\tau) = J\Phi_{R_2}(\mathcal{B}_\lambda)\eta(\tau)$. Then $\tilde{V}^u_{\lmd}(\tau)=R_2(\tau) V^u_\lambda(\tau)$. Since $R_2(\tau)\vd=\vd$,
 \bea  \mu(\vd, V^u_\lambda(\tau); (-\infty, \tau_0])= \mu(\vd, \tilde{V}^u_\lambda(\tau); (-\infty, \tau_0]) ,      \eea which is equivariant to $\mu(\mathcal{B}_\lambda; \tau_0)=\mu(\Phi_{R_2}(\mathcal{B}_\lambda); \tau_0)  $.

Since the homothetic solution with different energies behaviors quite differently, first let's assume $H_0 \ge 0$, then $\frac{3}{8}rH_0 +\frac{b}{8}+\lambda>0$. A similar argument as in the proof of Proposition \ref{prop:B1} shows for any $\tau_0 \in \rr$,
 $$\mu(\vd, \tilde{V}^u_\lambda(\tau); (\-\infty, \tau_0])=0.$$
From \eqref{eq: MasSum B2}, we get the following proposition
 \begin{prop} \label{pop:B2h0} For $H_0\geq 0$,  $\mu(\hat{B}_2; \tau_0) =0$.
\end{prop}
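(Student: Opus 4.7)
The plan is to leverage the reduction already carried out just before the statement. By the symplectic additivity of the Maslov index, equation \eqref{eq: MasSum B2} reduces the computation of $\mu(\hat B_2;\tau_0)$ to a sum over the eigenvalues $\lambda_i$ of $M^{-1}D^2U|_{\mathcal E}(s_0)$, so it suffices to show that $\mu(\mathcal{B}_\lambda;\tau_0)=0$ for every such $\lambda$. By the symplectic gauge change given by the matrix $R_2(\tau)$ in \eqref{eq: R2lambda}, and since $R_2(\tau)\vd = \vd$, Lemma \ref{lem: phi R B} and the symplectic invariance of the Maslov index reduce the task further to proving the same vanishing for the diagonal system $\eta'(\tau)=J\Phi_{R_2}(\mathcal{B}_\lambda)\eta(\tau)$, whose lower-right entry is $-f(\tau):=-\bigl(\tfrac{3}{8}r(\tau)H_0+\tfrac{b}{8}+\lambda\bigr)$, obtained from the energy identity \eqref{eq: energy homothetic}.

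Next I would show no crossing with $\vd$ can occur. The unstable subspace $\tilde V^u_\lambda(\tau)$ of the reduced system is one-dimensional, spanned by a non-trivial solution $\xi(\tau)=(y(\tau),x(\tau))^T$ which decays as $\tau\to-\infty$; a direct computation shows $x$ satisfies the Sturm–Liouville equation
\begin{equation*}
x''(\tau)=f(\tau)x(\tau),\qquad (x(\tau),x'(\tau))\to(0,0)\ \text{as}\ \tau\to-\infty,
\end{equation*}
and a crossing of $\tilde V^u_\lambda$ with $\vd$ at some $\tau_*\le\tau_0$ is equivalent to $x(\tau_*)=0$. Under the standing non-spiral hypothesis $\lambda\ge -b/8$ together with $H_0\ge 0$ and $r(\tau)>0$, one has $f(\tau)\ge 0$ everywhere, and strictly positive wherever $H_0>0$ or $\lambda>-b/8$ (this matches the inequality $f>0$ already asserted in the paragraph preceding the statement). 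Lemma \ref{lem:nondegen}(b) then rules out any non-trivial solution with $x(\tau_*)x'(\tau_*)=0$ that also decays at $-\infty$; consequently $x(\tau)\neq 0$ for all $\tau$, no crossing occurs in $(-\infty,\tau_0]$, and $\mu(\mathcal{B}_\lambda;\tau_0)=0$. Summing over $i=1,\dots,n^*-1$ through \eqref{eq: MasSum B2} yields $\mu(\hat B_2;\tau_0)=0$.

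The only delicate point is guaranteeing that the hypothesis $\int_{\tau_1}^{\tau_2}f>0$ of Lemma \ref{lem:nondegen} is met in the worst case. This reduces to checking that $f$ is not identically zero; strict positivity is automatic as soon as either $H_0>0$ (so $\tfrac{3}{8}r H_0>0$ along the homothetic trajectory) or $\lambda>-b/8$ (strict non-spiral in the $i$-th direction), and these are the only cases actually needed in Theorem \ref{thm 2 homo}. Thus the proof is a clean application of the two preliminary tools, the symplectic reduction via $R_2$ and the elementary Sturm-theoretic Lemma \ref{lem:nondegen}, with essentially the same structure as the argument just used for $\hat B_1$ in Proposition \ref{prop:B1}.
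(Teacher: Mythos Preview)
Your proposal is correct and follows essentially the same argument as the paper: reduce via \eqref{eq: MasSum B2} to each $\mathcal{B}_\lambda$, conjugate by $R_2(\tau)$ to obtain the diagonal system \eqref{eq: R2lambda}, and then apply the Sturm-type Lemma \ref{lem:nondegen}(b) to conclude $x(\tau)\neq 0$ and hence no crossing with $\vd$, exactly as in Proposition \ref{prop:B1}. You are in fact slightly more careful than the paper in flagging the borderline case $H_0=0$, $\lambda=-b/8$ where $f$ could vanish identically; the paper simply asserts $\frac{3}{8}rH_0+\frac{b}{8}+\lambda>0$ without comment, implicitly relying on the strict non-spiral condition (the general non-spiral case being handled later by perturbation in Proposition \ref{prop: Mors index nonspirial}).
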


Now we will compute the Morse index of $q(t)$.

\begin{prop} \label{prop: Morse PosiZeroEng}
If the energy $H_0 \ge 0$, then $m^-(q;T^-, T^+)=0$.
\end{prop}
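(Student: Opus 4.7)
The plan is to compute, for every compact subinterval $[\tau_1,\tau_2]$ of the McGehee-time interval $(\tau^-,\tau^+)$ corresponding to $(T^-,T^+)$, the Maslov index $\mu(\vd, \hat{\gm}(\tau,\tau_1)\vd;\, [\tau_1,\tau_2])$ and show it equals $n^*$. Combined with the Morse index theorem \eqref{Mor-Mas} and Lemma \ref{lem: Maslov index t tau}, this forces $m^-(q; t_1,t_2) = 0$ for every $[t_1,t_2] \subset (T^-,T^+)$, and monotonicity together with Definition \ref{dfn: Morse index} then yields $m^-(q; T^-,T^+) = 0$. This direct route is preferable to invoking Theorem \ref{thm: hyper}, since the non-spiral condition of Theorem \ref{thm 2 homo} is strictly weaker than the strict non-spiral hypothesis required there.

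First I would exploit the symplectic block decomposition $\hat{B} = \hat{B}_1 \diamond \hat{B}_2$ from \eqref{eq: B hat 1 2} and, as in the discussion preceding \eqref{eq: MasSum B2}, conjugate $\hat{B}_2$ by the $\vd$-preserving constant symplectic matrix $C_d A_d$ to further split into the $2\times 2$ blocks $\mathcal{B}_{\lmd_i}$ for $i=1,\dots,n^*-1$, where the $\lmd_i$ are the eigenvalues of $M^{-1}D^2U|_{\E}(s_0)$ (Lemma \ref{lem: eigenvalue U hat}). By symplectic additivity it suffices to show that the Dirichlet Maslov index on each $2\times 2$ block equals $1$. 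On each such block I would apply the $\vd$-preserving symplectic conjugation $R_1(\tau)$ (respectively $R_2(\tau)$) used in the proofs of Propositions \ref{prop:B1} and \ref{pop:B2h0}, after which the $x$-component of the Dirichlet evolution satisfies a scalar equation $x'' = g(\tau)\, x$ with $x(\tau_1) = 0$, $x'(\tau_1) \ne 0$, where by the energy identity \eqref{eq: energy homothetic}
\[
g = \tfrac{3}{16}\, v^2 + \tfrac{11}{4}\, b \quad \text{for } \hat{B}_1,
\qquad
g = \tfrac{3}{8}\, r H_0 + \tfrac{b}{8} + \lmd_i \quad \text{for } \mathcal{B}_{\lmd_i},
\]
with $b = \U(x_0) > 0$. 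The non-spiral condition gives $\lmd_i \ge \lmd_1 \ge -b/8$, and combined with $H_0 \ge 0$ and $r > 0$ this forces $g \ge 0$.

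Whenever $g \not\equiv 0$ I would invoke Lemma \ref{lem:nondegen}(a) to rule out any further zero of $x$ on $(\tau_1,\tau_2]$: a second zero at $\tau_2' > \tau_1$ would give $x(\tau_1)x'(\tau_1) = x(\tau_2')x'(\tau_2') = 0$, violating the lemma. The resulting single initial crossing at $\tau_1$ is positive and of dimension one (the top-left entry after conjugation is $1>0$), contributing exactly $1$ to the Maslov index on that block. Summing the $n^*$ blocks yields $\mu = n^*$, hence $m^-(q; t_1,t_2) = 0$ for every compact sub-interval.

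The main obstacle I anticipate is the degenerate case of the non-spiral condition in which $H_0 = 0$ and $\lmd_i = -b/8$ hold simultaneously, for then $g \equiv 0$ on the corresponding block and Lemma \ref{lem:nondegen} does not apply. However, the scalar equation reduces in this case to $x'' = 0$, whose explicit solution $x(\tau) = x'(\tau_1)(\tau - \tau_1)$ has no zero on $(\tau_1, \tau_2]$, so the crossing count is still $1$ and the argument goes through unchanged. Once this edge case is handled separately, the computation $\mu = n^*$ is unconditional within the non-spiral, $H_0\ge 0$ regime and the proposition follows.
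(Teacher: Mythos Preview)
Your argument is correct, and it is a genuinely different (and in some ways cleaner) route than the paper's. The paper does \emph{not} compute $\mu(\vd,\hat{\gm}(\tau,\tau_1)\vd;[\tau_1,\tau_2])$ directly. Instead it works with the \emph{unstable-subspace} index $\mu(\hat{B};\tau_0)=\mu(\vd,V^u(\tau);(-\infty,\tau_0])$: Propositions~\ref{prop:B1} and~\ref{pop:B2h0} show $\mu(\hat{B};\tau_0)=0$, and the translation to the Morse index is then carried out via the identities \eqref{eq: m-=mu} and \eqref{eq: m-=mutau0} from Theorem~\ref{thm: hyper}. That theorem, however, needs $J\hat{B}(\pm\infty)$ to be hyperbolic, i.e.\ the \emph{strict} non-spiral condition; the borderline case $\lambda_i=-b/8$ is only recovered afterwards by the perturbation argument of Proposition~\ref{prop: Mors index nonspirial}.

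Your approach bypasses the unstable subspace and Theorem~\ref{thm: hyper} entirely: you compute the Dirichlet Maslov index on compact $[\tau_1,\tau_2]$, which is all that \eqref{Mor-Mas} needs. After the $\vd$-preserving conjugations the scalar equations $x''=g x$ with $g\ge 0$ immediately give a single crossing per block, yielding $\mu=n^*$ and hence $m^-=0$. The gain is that the argument already covers the non-strict non-spiral case for $H_0\ge 0$ without perturbation; the cost is that this shortcut is specific to the homothetic situation and does not build the heteroclinic-index machinery the paper reuses for $H_0<0$. One small technical point: Lemma~\ref{lem:nondegen} as stated carries global hypotheses ($f$ defined on $\rr$ with positive limits at $\pm\infty$) that are not all satisfied here (for $H_0>0$ the McGehee time interval is $(-\infty,\tau^+)$ with $\tau^+$ finite). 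This is harmless, since part~(a) really only needs $(xx')'=(x')^2+gx^2\ge 0$ on the compact interval in question, and you could state that one-line convexity argument directly instead of citing the lemma.
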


\begin{proof}
First let's assume $H_0=0$. Then the energy identity \eqref{eq: energy homothetic} implies $v(\tau) \equiv \sqrt{2b}$, for all $\tau\in\rr$. Notice that $v(\tau)$ is positive, as we assume $q(t)$ goes from the total collision to infinity.  From Proposition \ref{prop:B1} and   Proposition \ref{pop:B2h0}, we have
$$  \mu(\hat{B}; \tau_0) =0, \;\; \forall \tau_0 \in \rr.$$
Since the system is non-degenerate, let $\tau_0\to +\infty$, we get
\bea  \mu(\hat{B}; \rr) =0.\eea
From \eqref{eq: m-=mu},
\bea m^-(q; T^-, T^+)=\mu(\hat{B}; \rr)=0.  \label{morseh0} \eea

Now let's assume $H_0>0$. By our assumption $q(T^-)$ is a total collision with a finite $T^- \in \rr$ and $q(T^+)$ is a hyperbolic infinity with $T^+=+\infty$. Let's fix an arbitrary finite time $T_0 >T^-$, and rewrite $q(t), t \in (T^-, T_0)$ in McGehee coordinates. Then by the energy identity \eqref{eq: energy homothetic},
\begin{equation*}
v(\tau) = \sqrt{2(H_0 r(\tau) +b)}, \; \forall \tau \in (-\infty, \tau_0).
\end{equation*}
Here $\tau_0=\tau(T_0)$ is the moment in McGehee coordinates corresponding to $T_0$. Since $q(T^+)$ is a hyperbolic infinity and we are using McGehee coordinates instead of the hyperbolic McGehee coordinates, there is a finite $\tau^+$, such that $\lim_{T_0 \to T^+} \tau(T_0)=\tau^+$.

By Proposition \ref{prop:B1} and \ref{pop:B2h0},
$$  \mu(\hat{B}; \tau_0) =0, \;\; \forall \tau_0 \in (-\infty, \tau^+). $$
The proofs of Proposition \ref{prop:B1}  and \ref{pop:B2h0} show that $\vd\pitchfork V^u(\tau_0).$
Then \eqref{eq: m-=mutau0} implies
$$m^-(q; T^-, T_0)=\mu(\hat{B}; \tau_0)=0, \; \forall T_0<T^+. $$
Since this is true for any finite $T_0 > T^-$, we have
$$   m^-(q; T^-, T^+) = \lim_{T_0 \to T^+} m^-(q; T^-, T_0)=0. $$
\end{proof}

From now on let's assume the energy $H_0 <0$. Notice that in this case $v(\tau_0)=0$ for some finite $\tau_0$, where $r(\tau)$ reaches its maximum at $\tau =\tau_0$. Without loss of generality, let's assume $\tau_0=0$. Solving the first equation in \eqref{eq: McGehee 2} directly, we get
\begin{equation}
 \label{eq: v tau} v(\tau)=- \sqrt{2b}\tanh \frac{\sqrt{2b}\tau}{2}.
 \end{equation}
As a result,
$$ \bc_{\lmd}(\pm \infty) = \lim_{\tau \to \pm \infty} \bc_{\lmd}(\tau) = \left(\begin{array}{cc}
1 & \mp \sqrt{2b}/4 \\
\mp \sqrt{2b}/4 & -\lmd
\end{array} \right)
$$
\begin{lem}  \label{lem:eigen JB-lambda} For any $\lambda>-b/8$, the eigenvalues of $J\bc_\lambda(\pm\infty)$ are $\pm\sqrt{b/8+\lambda}$, and the corresponding eigenspaces are
\begin{equation}
V^{\pm}(J\bc_\lambda(-\infty))= \left\langle \left(
     \begin{array}{c}
       \pm\sqrt{b/8+\lambda}-\sqrt{2b}/4 \\
       1 \\
     \end{array}
   \right) \right\rangle, \label{eigen JB-inf}
\end{equation}
\begin{equation}
V^{\pm}(J\bc_\lambda(+\infty))= \left\langle \left(
     \begin{array}{c}
       \pm\sqrt{b/8+\lambda}+\sqrt{2b}/4 \\
       1 \\
     \end{array}
   \right) \right\rangle. \label{eigen JB+inf}
\end{equation}
Moreover,
\begin{equation} \label{homanderindexhatb}
s(\vd, \vn;  V^+(J\bc_\lambda(-\infty)), V^+(J\bc_\lambda(\infty)) )=\begin{cases}
1, \quad if \quad  \lambda\in(-b/8, 0), \\
0, \quad if \quad  \lambda\in(0, +\infty).
\end{cases}
\end{equation}
\end{lem}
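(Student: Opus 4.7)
The plan is to verify the eigenvalue/eigenvector assertions by direct linear algebra and then compute the Hörmander index via a concrete linear path of Lagrangians together with the crossing-form formula for the Maslov index.

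First I would substitute the explicit formula $v(\tau) = -\sqrt{2b}\tanh(\sqrt{2b}\tau/2)$ from \eqref{eq: v tau} into the definition of $\bc_\lambda(\tau)$ in \eqref{eq: B hat lmd} to confirm
$$\bc_\lambda(\pm\infty) = \begin{pmatrix} 1 & \mp\sqrt{2b}/4 \\ \mp\sqrt{2b}/4 & -\lambda \end{pmatrix}.$$
Multiplying by $J$ and computing the characteristic polynomial yields $\mu^2 = b/8 + \lambda$, so the eigenvalues are $\pm\sqrt{b/8+\lambda}$ (real and nonzero since $\lambda > -b/8$). Substituting each eigenvalue back into $(J\bc_\lambda(\mp\infty) - \mu I)w = 0$ and solving immediately gives the eigenvector formulas \eqref{eigen JB-inf} and \eqref{eigen JB+inf}; transversality with $\vd$ follows at once because the second component of every eigenvector is $1$.

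For the Hörmander index, set $a_\pm := \sqrt{b/8+\lambda} \pm \sqrt{2b}/4$ and connect $V^+(J\bc_\lambda(-\infty))$ to $V^+(J\bc_\lambda(+\infty))$ by the linear Lagrangian path $\Lambda(s) = \langle (a_s, 1)^T\rangle$, $a_s = (1-s)a_- + s\, a_+$, $s\in[0,1]$. Since the second component is identically $1$, $\Lambda(s) \cap \vd = 0$ throughout, so $\mu(\vd, \Lambda) = 0$. A crossing of $\Lambda(s)$ with $\vn$ occurs precisely when $a_s = 0$. Because $a_+ - a_- = \sqrt{2b}/2 > 0$, such a crossing exists and is unique (in the interior) iff $a_- < 0 < a_+$, which (using $\sqrt{2b}/4 = \sqrt{b/8}$) is equivalent to $\lambda \in (-b/8, 0)$; for $\lambda > 0$ both $a_\pm > 0$ and no crossing occurs.

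It remains to determine the sign of the single crossing in the case $\lambda\in(-b/8,0)$. Choosing $\vd$ as the complementary Lagrangian at the crossing time $s_0$, the element $v = (0,1)^T \in \Lambda(s_0)\cap\vn$ is decomposed as $v + w(s) \in \Lambda(s)$ with $w(s) = (a_s, 0)^T \in \vd$, so the crossing form is
$$Q(v) = \left.\frac{d}{ds}\right|_{s_0}\omega(v, w(s)) = -(a_+ - a_-) < 0.$$
Hence $\mu(\vn,\Lambda) = -1$ in this case and $\mu(\vn,\Lambda) = 0$ when $\lambda > 0$. Applying the Hörmander identity $s(\vd,\vn;\Lambda(0),\Lambda(1)) = \mu(\vd,\Lambda) - \mu(\vn,\Lambda)$ yields $1$ and $0$ respectively, which is exactly \eqref{homanderindexhatb}. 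The only real care needed is in the sign convention of the crossing form; once the standard convention $\omega(x,y) = \langle Jx, y\rangle$ used elsewhere in the paper is fixed, the computation is mechanical.
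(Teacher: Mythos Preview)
Your proposal is correct and follows essentially the same approach as the paper: both verify the eigen\-data by direct computation, connect $V^+(J\bc_\lambda(-\infty))$ to $V^+(J\bc_\lambda(+\infty))$ by the obvious affine path of lines $\langle(a_s,1)^T\rangle$, observe that this path never meets $\vd$, and count the single crossing with $\vn$ in the range $\lambda\in(-b/8,0)$. The only difference is cosmetic (you parameterize on $[0,1]$, the paper on $[-1,1]$) and that you spell out the crossing-form computation where the paper simply asserts the crossing is negative.
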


\begin{proof} The eigenvalues and eigenspaces follow from direct computations.
To compute the H\"{o}rmander index. Set
$$ \Lambda_\lambda(t)=\left\langle \left(
     \begin{array}{c}
       \sqrt{b/8+\lambda}+t\sqrt{2b}/4 \\
       1 \\
     \end{array}
   \right) \right\rangle, \;\; \text{ for } t\in [-1,1].$$
Then $\Lambda_\lambda(-1)=V^+(J\bc_\lambda(-\infty))$ and $\Lambda_\lambda(1)=V^+(J\bc_\lambda(\infty))$. When $\lambda>0$,
$$ \sqrt{b/8+\lambda}+t\sqrt{2b}/4>0, \;\; \forall t\in[-1,1].$$
Therefore $\Lambda_\lambda(t) $ is transversal to both $\vd$ and $\vn$, for any $t \in [-1, 1]$. Then $\mu(\vd, \Lambda_\lambda)=\mu(\vd, \Lambda_\lambda)=0$. This implies
$$s(\vd, \vn,  V^+(J\bc_\lambda(-\infty)), V^+(J\bc_\lambda(\infty)) )=0.$$
When $-b/8<\lambda<0$,  $\Lambda_\lambda(t)\pitchfork\vd$, $\forall t\in[-1,1]$, which means $\mu(\vd, \Lambda_\lambda)=0$. Meanwhile there is only one negative crossing for $\vn$, so $\mu(\vd, \Lambda_\lambda)=-1$.
As a result,
 $$s(\vd, \vn,  V^+(J\bc_\lambda(-\infty)), V^+(J\bc_\lambda(\infty)) )=1.$$
This completes our proof.
\end{proof}

In the following, let $V^s_{\lmd}(\tau)$ and $V^u_{\lmd}(\tau)$ be the stable and unstable subspaces of the linear Hamiltonian system  \eqref{eq: B hat lmd} according to Definition \ref{dfn: stable subspace}. For $\lmd=0$, $\bc_{\lmd}$ is degenerate according to Definition \ref{dfn: maslov index hete} as will be shown in the next lemma.
\begin{lem}\label{lem:mu=0}  $\mu(\bc_0; \rr)=0$ and $\nu(\bc_0)=1.$
\end{lem}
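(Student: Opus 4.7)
The plan is to exploit the fact that the $2\times 2$ Hamiltonian system $\eta' = J\bc_0(\tau)\eta$ decouples enough to be solved in closed form using the explicit expression \eqref{eq: v tau} for $v(\tau)$. Write $\eta = (p,q)^{T}$. Since $\bc_0 = \bigl(\begin{smallmatrix} 1 & v/4 \\ v/4 & 0 \end{smallmatrix}\bigr)$, the linear system reads $p' = -(v/4)p$ and $q' = p + (v/4)q$; crucially, the first is an autonomous scalar ODE for $p$ alone, so the whole system can be integrated explicitly.

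A direct integration, using $\int_{0}^{\tau} v(s)\,ds = -2\ln\cosh(\sqrt{2b}\,\tau/2)$ (which follows from \eqref{eq: v tau}), gives $p(\tau) = p(0)\sqrt{\cosh(\sqrt{2b}\,\tau/2)}$, which grows to $+\infty$ at both $\pm\infty$ whenever $p(0)\neq 0$. For $p\equiv 0$ the second equation integrates to $q(\tau) = q(0)/\sqrt{\cosh(\sqrt{2b}\,\tau/2)}$, which decays to $0$ at both $\pm\infty$. Thus the subspace $\vn = \langle(0,1)^{T}\rangle$ consists of bounded solutions decaying at both infinities, while any solution starting outside $\vn$ has an unbounded $p$-component.

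Consequently $V^{u}_{0}(\tau) = V^{s}_{0}(\tau) = \vn$ for every $\tau\in\rr$, which immediately yields $\nu(\bc_0) = \dim(V^{u}_{0}(0)\cap V^{s}_{0}(0)) = 1$. For the Maslov index, $V^{u}_{0}(\tau)\equiv \vn$ is a constant Lagrangian path transversal to $\vd$, so it has no crossings with $\vd$ on any interval; hence $\mu(\vd,V^{u}_{0}(\tau);(-\infty,\tau_0])=0$ for every $\tau_0$, and passing to the limit gives $\mu(\bc_0;\rr)=0$.

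No serious obstacle is anticipated; the only care needed is in ensuring consistency with the asymptotic eigenvector description. As a sanity check, the formulas in Lemma \ref{lem:eigen JB-lambda}, evaluated at $\lambda=0$ with $\sqrt{b/8+0}=\sqrt{2b}/4$, produce $V^{+}(J\bc_0(-\infty))=V^{-}(J\bc_0(+\infty))=\vn$, exactly matching the constant stable/unstable subspaces computed above, so the degeneracy is consistent with Proposition \ref{prop: Vus limit} (which does not apply here because the transversality hypothesis $\vd\pitchfork V^{s}_{0}$ nevertheless holds but the limit coincides with $\vn$ itself).
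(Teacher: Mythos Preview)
Your proof is correct and follows essentially the same route as the paper: both arguments exhibit the explicit decaying solution $\eta(\tau)=(0,q(\tau))^T$ with $q(\tau)\propto r^{1/4}(\tau)=\text{const}/\sqrt{\cosh(\sqrt{2b}\,\tau/2)}$, conclude that $V^u_0(\tau)=V^s_0(\tau)=\vn$ for all $\tau$, and read off $\nu(\bc_0)=1$ and $\mu(\bc_0;\rr)=0$ from the absence of crossings with $\vd$. Your version is slightly more detailed in that you also verify explicitly that solutions with $p(0)\neq 0$ blow up, whereas the paper simply notes $r^{1/4}(\tau)(0,1)^T$ is a solution and relies on the one-dimensionality of the stable and unstable subspaces.
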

\begin{proof}
 Notice that $\xi_1(\tau)=r^{\frac{1}{4}}(\tau)(0,1)^T$, $\tau \in \rr$, is a  solution of \eqref{eq: B hat lmd} with $\lmd=0$, where $r(\tau)$ satisfy \eqref{eq: McGehee 2}. Since $q(t)$ is a homothetic solution with negative energy, $r(\tau)$ as well as $\xi_1(\tau)$ goes to $0$, as $\tau\to\pm\infty$. Therefore $V^{s}_0(\tau) = V^{u}_0(\tau)=\langle\xi_{1}(\tau) \rangle$. By Definition \ref{dfn: maslov index hete}, this implies $\nu(\hat{B}_0)=1$. Meanwhile since $V^u_0(\tau)$ has no crossing with $\vd$, we have $\mu(\bc_0; \rr)=0$.
 \end{proof}

Next lemma shows $\bc_{\lmd}$ is non-degenerate, for $\lmd \ne 0$.

\begin{lem}\label{lem:muneq0} $\nu(\bc_\lambda)=0$, for any $\lambda\in (-b/8,0)\cup(0,+\infty). $
\end{lem}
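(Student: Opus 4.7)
The plan is to reduce the problem to a scalar Sturm--Liouville eigenvalue equation of P\"oschl--Teller type and then pin down its point spectrum via supersymmetric factorization. Applying the symplectic rescaling $R_{2}$ exactly as in the proof of Proposition \ref{prop:B1}, the linear system $\eta' = J\mathcal{B}_{\lambda}\eta$ is equivalent to the scalar second-order equation
$$ x''(\tau) = g_{\lambda}(\tau)\,x(\tau), \qquad g_{\lambda}(\tau) = \tfrac{3}{8}r(\tau)H_{0}+\tfrac{b}{8}+\lambda, $$
and $\nu(\mathcal{B}_{\lambda})$ equals the dimension of its $L^{2}(\mathbb{R})$-solution space. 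Since $H_{0}<0$, integrating \eqref{eq: McGehee 2} against \eqref{eq: v tau} yields $r(\tau) = -\tfrac{b}{H_{0}}\,\mathrm{sech}^{2}(\sqrt{2b}\tau/2)$, so $g_{\lambda}$ takes the P\"oschl--Teller form
$$ g_{\lambda}(\tau) = -\tfrac{3b}{8}\,\mathrm{sech}^{2}\!\Big(\tfrac{\sqrt{2b}\tau}{2}\Big)+\tfrac{b}{8}+\lambda. $$
Rescaling $u = \tfrac{\sqrt{2b}}{2}\tau$ turns this into the spectral problem $L_{0}x = -\tfrac{2\lambda}{b}x$ for the self-adjoint operator
$$ L_{0} := -\partial_{u}^{2}-\tfrac{3}{4}\,\mathrm{sech}^{2}u+\tfrac{1}{4} \quad \text{on } L^{2}(\mathbb{R}), $$
so proving the lemma amounts to showing that $-\tfrac{2\lambda}{b}$ is never in the point spectrum of $L_{0}$ whenever $\lambda \in (-b/8,0)\cup(0,+\infty)$.

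Next I would verify, by a direct computation using $(\tanh u)' = \mathrm{sech}^{2}u$ and $\tanh^{2}u = 1-\mathrm{sech}^{2}u$, the P\"oschl--Teller factorization
$$ L_{0} = A^{*}A, \qquad A := \partial_{u}+\tfrac{1}{2}\tanh u, \qquad A^{*} := -\partial_{u}+\tfrac{1}{2}\tanh u, $$
which immediately gives $L_{0}\geq 0$ and identifies $\ker A = \langle \mathrm{sech}^{1/2}(u)\rangle$ with the ground state $x_{0}$ already produced in Lemma \ref{lem:mu=0}. The partner operator works out to
$$ AA^{*} = -\partial_{u}^{2}+\tfrac{1}{4}\,\mathrm{sech}^{2}u+\tfrac{1}{4} \;\geq\; \tfrac{1}{4}, $$
with essential spectrum $[1/4,\infty)$; a short ODE argument shows that it has no point spectrum at all, since eigenvalues below $1/4$ are blocked by the operator inequality, and at $E=1/4$ the equation $-\psi''+\tfrac{1}{4}\mathrm{sech}^{2}u\,\psi = 0$ admits only asymptotically linear solutions, none of which lie in $L^{2}(\mathbb{R})$.

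The intertwining argument then finishes the plan: any $L^{2}$-eigenfunction $\psi$ of $L_{0}$ with eigenvalue $E\neq 0$ necessarily has $A\psi \neq 0$ (otherwise $\psi \in \ker A = \langle x_{0}\rangle$ forces $E = 0$), and $AA^{*}(A\psi) = A(A^{*}A\psi) = E(A\psi)$ would then exhibit $A\psi$ as an $L^{2}$-eigenfunction of $AA^{*}$ at eigenvalue $E$, contradicting the previous step. Consequently the point spectrum of $L_{0}$ is exactly $\{0\}$, which corresponds only to $\lambda = 0$, and so $\nu(\mathcal{B}_{\lambda}) = 0$ for every $\lambda \in (-b/8,0)\cup(0,+\infty)$. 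I expect the main obstacle to be the sharp analysis of the partner operator $AA^{*}$: its positivity $AA^{*}\geq 1/4$ is immediate from the factorization, but ruling out an embedded $L^{2}$ eigenfunction at the threshold $E = 1/4$ requires the elementary but careful ODE check mentioned above.
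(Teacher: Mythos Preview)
Your argument is correct, but it takes a considerably longer route than the paper. You work with the $x$-equation obtained via the $R_2$ transformation, whose potential $g_\lambda$ is \emph{not} sign-definite for $H_0<0$; this forces you into the full P\"oschl--Teller spectral analysis and the SUSY factorization $L_0=A^*A$. The paper instead eliminates $x$ and derives the scalar equation for the \emph{momentum} component $y$ directly from \eqref{eq: B hat lmd}:
\[
y''=f(\tau)y,\qquad f(\tau)=\tfrac{b}{8}\bigl(1-\tanh^2(\sqrt{2b}\,\tau/2)\bigr)+\lambda+\tfrac{b}{8}.
\]
Since $\lambda>-b/8$, this $f$ is strictly positive on all of $\mathbb{R}$, so Lemma~\ref{lem:nondegen}(c) immediately rules out any nontrivial bounded solution, finishing the proof in two lines. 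The point is that passing to the $y$-variable absorbs exactly the negative $\mathrm{sech}^2$ well that complicates your $x$-equation.

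One minor remark: the ``main obstacle'' you flag, namely the threshold case $E=1/4$ for $AA^*$, is not actually needed for this lemma. The hypothesis $\lambda\in(-b/8,0)\cup(0,\infty)$ corresponds to $E=-2\lambda/b\in(-\infty,0)\cup(0,1/4)$, strictly below $1/4$; the inequality $AA^*\ge 1/4$ together with $L_0\ge 0$ already disposes of this entire range via intertwining, without any threshold analysis.
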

\begin{proof} By a contradiction argument, suppose that there is a $\lambda_0\in(-b/8,0)\cup(0,+\infty)$, such that $\nu(\bc_{\lambda_0})=1$. This means $V^s_{\lmd_0}(0)=V^u_{\lmd_0}(0)$. This further implies $V^s_{\lmd_0}(\tau)=V^u_{\lmd_0}(\tau)$. Then there exists a solution $\xi_1(\tau) = (y(\tau),x(\tau))^T$ of \eqref{eq: B hat lmd}, such that
 $$
\langle \xi_0(\tau) \rangle = V^s_{\lmd_0}(\tau)=V^u_{\lmd_0}(\tau), \;\; \forall \tau \in \rr. $$
Then  $\lim_{\tau \to \pm \infty} \xi_0(\tau) = \lim_{\tau\to \pm\infty}(y(\tau),x(\tau))^T=(0,0)^T$.
Meanwhile $\langle \xi_0(\tau) \rangle \to V^+(J\hat{B}_{\lambda_0}(-\infty))$, when $\tau\to -\infty$. Since $\lambda_0\neq0$, by \eqref{eigen JB-inf}, $y(\tau)$ can not be zero, when $\tau$ is close enough to $-\infty$.

On the other hand, as $\xi_0(\tau)$ satisfies \eqref{eq: B hat lmd}, we have
\begin{equation} \label{eq: xy}
  \begin{aligned}
   y'&= -\frac{v}{4}y+\lambda_0 x,  \\
   x'&= y+\frac{v}{4}x.
  \end{aligned}
  \end{equation}
Differentiate both sides of equation \eqref{eq: xy} with respect to $\tau$
\begin{equation}
 y''=f(\tau)y , \label{yddot1}, \; \text{ where } f(\tau)=\frac{b}{8}(1- \tanh^2(\frac{\sqrt{2b}\tau}{2}))+\lambda_0+\frac{b}{8}>0 .
 \end{equation}
From property (c) of Lemma \ref{lem:nondegen}, there is no non-trivial bounded solution of  \eqref{yddot1}, which is a contradiction to what we obtained above.
\end{proof}

By the previous two lemmas and the decomposition property
$$\nu(\hat{B}_{2})=\#\{\lmd_{i}:\lmd_{i}=0,1\leq i\leq n^{*}-1\}=\dim (\ker(M^{-1}D^2U|_{\mathcal{E}}(s_0))).$$
This immediately implies the following corollary.
\begin{cor}
$\nu(\hat{B})=\nu(\hat{B}_{1})+\nu(\hat{B}_{2})=\dim (\ker(M^{-1}D^2U|_{\mathcal{E}}(s_0))).$
\end{cor}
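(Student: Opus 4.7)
The plan is to exploit the symplectic splitting $\hat{B}(\tau)=\hat{B}_1(\tau)\diamond\hat{B}_2(\tau)$ established in \eqref{eq: B hat 1 2}, and then further decompose $\hat{B}_2$ along the eigenspaces of $M^{-1}D^2U|_{\mathcal{E}}(s_0)$, so that the degenerate index $\nu$ reduces to a sum of scalar contributions that have already been analyzed.

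First I would verify the additivity $\nu(\hat{B})=\nu(\hat{B}_1)+\nu(\hat{B}_2)$. Because the linear Hamiltonian system associated with $\hat{B}$ splits into two independent subsystems, the stable and unstable bundles split as direct sums $V^s(\tau)=V^s_1(\tau)\oplus V^s_2(\tau)$ and $V^u(\tau)=V^u_1(\tau)\oplus V^u_2(\tau)$, which on intersecting at $\tau=0$ gives $\dim(V^u(0)\cap V^s(0))=\dim(V^u_1(0)\cap V^s_1(0))+\dim(V^u_2(0)\cap V^s_2(0))$, i.e.\ the claimed additivity.

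Next I would handle the two summands. For $\nu(\hat{B}_1)$, I use the argument already deployed in Proposition~\ref{prop:B1}: after conjugating by $R_1$, the scalar component $x(\tau)$ of a vector in $V^u_1(\tau)$ satisfies $x''(\tau)=\bigl(\tfrac{3}{11}v^2(\tau)+\tfrac{11}{4}b\bigr)x(\tau)$ with coefficient bounded below by the positive constant $\tfrac{11}{4}b$; property~(c) of Lemma~\ref{lem:nondegen} forbids a nontrivial bounded solution on $\mathbb{R}$, so $V^s_1(0)\cap V^u_1(0)=\{0\}$ and $\nu(\hat{B}_1)=0$. For $\nu(\hat{B}_2)$, I conjugate by the block diagonal symplectic matrix $A_d=\mathrm{diag}(A^{-T},A)$ with $A^T\hat{M}A=I$, then by an orthogonal change of basis that diagonalizes $A^T\hat{U}_{xx}(x_0)A$; by Lemma~\ref{lem: eigenvalue U hat} the resulting diagonal entries are exactly $\lambda_1,\dots,\lambda_{n^*-1}$. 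This yields a symplectic sum $\bc_{\lambda_1}\diamond\cdots\diamond\bc_{\lambda_{n^*-1}}$, and the additivity of $\nu$ again gives $\nu(\hat{B}_2)=\sum_{i=1}^{n^*-1}\nu(\bc_{\lambda_i})$. Applying Lemma~\ref{lem:muneq0} when $\lambda_i\neq 0$ (which yields $\nu(\bc_{\lambda_i})=0$) and Lemma~\ref{lem:mu=0} when $\lambda_i=0$ (which yields $\nu(\bc_0)=1$), I obtain $\nu(\hat{B}_2)=\#\{i:\lambda_i=0\}=\dim\ker(M^{-1}D^2U|_{\mathcal{E}}(s_0))$.

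The genuine technical heart is already packaged in Lemma~\ref{lem:muneq0}, where the key step is reducing the kernel problem to the scalar ODE $y''=f(\tau)y$ with $f>0$ and invoking the non-oscillation statement in Lemma~\ref{lem:nondegen}. Once the additivity of stable/unstable bundles under symplectic sums is recorded, the corollary is a bookkeeping exercise; the only mild subtlety is confirming that the symplectic conjugations used to diagonalize $\hat{B}_2$ preserve $\nu$, which is immediate from Lemma~\ref{lem: phi R B} since the conjugating matrices are constant symplectic maps and hence carry $V^{s/u}_\lambda$ to the corresponding spaces of the transformed system without changing their intersection dimension.
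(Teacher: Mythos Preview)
Your proof is correct and follows essentially the same approach as the paper: use the symplectic splitting to reduce to $\nu(\hat B_1)$ and $\nu(\hat B_2)$, then further diagonalize $\hat B_2$ and apply Lemmas~\ref{lem:mu=0} and~\ref{lem:muneq0}. In fact you supply slightly more detail than the paper does, by explicitly arguing that $\nu(\hat B_1)=0$ via the second-order equation from Proposition~\ref{prop:B1} together with property~(c) of Lemma~\ref{lem:nondegen}, a point the paper leaves implicit.
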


\begin{lem} \label{lem:neumindex} For $\lambda>-b/8$ and $\lambda\neq0$, $ \mu(\vn, V^u_{\lmd}; \mathbb{R}) =0.$
\end{lem}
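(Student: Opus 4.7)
The plan is to rule out all crossings of the Lagrangian path $\tau \mapsto V^u_\lambda(\tau)$ with $\vn$, both at interior points of $\rr$ and at the endpoints $\pm\infty$. Since $V^u_\lambda(\tau)$ is one-dimensional, writing a nonzero spanning vector as $\xi(\tau) = (y(\tau), x(\tau))^T$, a crossing with $\vn = \{0\} \oplus \rr$ occurs precisely at the zeros of $y(\tau)$.

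First, I will verify that the Maslov index is well-defined by checking transversality at the endpoints. For $\lambda \neq 0$, Lemma \ref{lem:muneq0} gives $V^s_\lambda(0) \pitchfork V^u_\lambda(0)$, so Proposition \ref{prop: Vus limit} applies and yields $\lim_{\tau \to \pm\infty} V^u_\lambda(\tau) = V^+(J\bc_\lambda(\pm\infty))$. By Lemma \ref{lem:eigen JB-lambda}, the first components of the corresponding spanning eigenvectors are $\sqrt{b/8+\lambda} - \sqrt{2b}/4$ at $\tau = -\infty$ and $\sqrt{b/8+\lambda} + \sqrt{2b}/4$ at $\tau = +\infty$; both are nonzero under $\lambda > -b/8$ and $\lambda \neq 0$, so the two limiting subspaces are transverse to $\vn$ and contribute nothing to the Maslov index.

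Next, I will eliminate all interior crossings via a scalar Sturm-Liouville argument. Pick a nonzero $\xi(\tau) = (y(\tau), x(\tau))^T \in V^u_\lambda(\tau)$; by definition of the unstable subspace, $\xi(\tau) \to 0$ as $\tau \to -\infty$. From \eqref{eq: xy}, $y' = -(v/4)y + \lambda x$ and $x' = y + (v/4)x$; differentiating the first equation and substituting $v' = v^2/2 - b$ along with the expression for $x'$ gives $y'' = f(\tau) y$, where
\[ f(\tau) = \frac{b}{8}\bigl(1 - \tanh^2(\sqrt{2b}\tau/2)\bigr) + \lambda + \frac{b}{8}, \]
exactly as in the proof of Lemma \ref{lem:muneq0}. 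The hypothesis $\lambda > -b/8$ makes $f$ strictly positive. Boundedness of $v$ together with $x, y \to 0$ as $\tau \to -\infty$ force $y'(\tau) \to 0$ at $-\infty$ as well. The hypothesis $\lambda \neq 0$ ensures that $y \not\equiv 0$, since $y \equiv 0$ inserted into $y' = -(v/4)y + \lambda x$ would force $x \equiv 0$ and hence $\xi \equiv 0$. Applying Lemma \ref{lem:nondegen}(b) to this $y$ rules out $y(\tau_0) y'(\tau_0) = 0$ at any finite $\tau_0$; in particular $y(\tau) \neq 0$ for all $\tau \in \rr$, so there are no interior crossings, and the Maslov index vanishes.

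The main work is the scalar reduction and verifying the positivity of $f$; after that Lemma \ref{lem:nondegen}(b) immediately yields $\mu(\vn, V^u_\lambda; \rr) = 0$. The delicate point is coordinating the non-triviality of $y$ and its decay at $-\infty$ with the hypothesis $\lambda \neq 0$, which is precisely what excludes the degenerate case already treated in Lemma \ref{lem:mu=0}.
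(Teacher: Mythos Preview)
Your proof is correct and follows essentially the same approach as the paper: reduce to the scalar equation $y''=f(\tau)y$ with $f>0$, use the decay of $\xi$ at $-\infty$ coming from the definition of $V^u_\lambda$, and apply Lemma \ref{lem:nondegen}(b) to conclude $y$ never vanishes, hence $V^u_\lambda(\tau)\pitchfork\vn$ for all $\tau$. You are more explicit than the paper in checking the endpoint transversality via Lemma \ref{lem:eigen JB-lambda} and in explaining why $\lambda\neq0$ guarantees $y\not\equiv0$, but the underlying argument is the same.
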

\begin{proof} The proof is similar to Lemma \ref{lem:muneq0}. Assume  $V^u_\lmd(\tau) =\langle \xi(\tau) \rangle= \langle (y(\tau),x(\tau))^T \rangle$, for all $\tau$. Then $y(\tau)$  satisfies \eqref{yddot1} and
$ \lim_{\tau\to -\infty}y(\tau) =0$.
Since $y(\tau)$ can not be zero for all $\tau$, from property (b) of Lemma \ref{lem:nondegen}, we have   $y(\tau)\neq0$ for $\tau\in\mathbb{R}$.  This implies that $V^u(\tau)\pitchfork \vn$ for any $\tau\in\mathbb{R}$, so we get the results.

\end{proof}

\begin{prop}  \label{prop: indexhatb}

\begin{equation*}
\mu(\vd, V^u_{\lmd}(\tau); \mathbb{R})=\begin{cases}
1, \quad \text{if} \quad  \lambda\in(-b/8, 0), \\
0, \quad \text{if} \quad  \lambda\in[0, +\infty).
\end{cases}
\end{equation*}
\end{prop}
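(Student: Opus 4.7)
The plan is to deduce the Dirichlet Maslov index $\mu(\vd,V^u_\lambda(\tau);\rr)$ from the Neumann one (which Lemma \ref{lem:neumindex} already shows vanishes) by comparing them through the H\"ormander index whose value is supplied by Lemma \ref{lem:eigen JB-lambda}. The only case requiring separate treatment is $\lambda=0$, where Lemma \ref{lem:mu=0} gives the answer directly: the unstable subspace there is $\langle r^{1/4}(\tau)(0,1)^T\rangle\equiv \vn$, which never meets $\vd$, so $\mu(\vd,V^u_0(\tau);\rr)=0$.

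For $\lambda\in(-b/8,0)\cup(0,+\infty)$, Lemma \ref{lem:muneq0} gives $\nu(\bc_\lambda)=0$, i.e. $V^s_\lambda(0)\pitchfork V^u_\lambda(0)$. Proposition \ref{prop: Vus limit} then yields the limits
\[
\lim_{\tau\to-\infty}V^u_\lambda(\tau)=V^+(J\bc_\lambda(-\infty)),\qquad
\lim_{\tau\to+\infty}V^u_\lambda(\tau)=V^+(J\bc_\lambda(+\infty)).
\]
Inspecting \eqref{eigen JB-inf}--\eqref{eigen JB+inf}, both limiting Lagrangian lines have nonzero first and second components whenever $\lambda\ne 0$, hence are transversal to $\vd$ and to $\vn$. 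By openness of transversality, there exist finite $\tau_1<\tau_2$ such that $V^u_\lambda(\tau)\pitchfork\vd$ and $V^u_\lambda(\tau)\pitchfork\vn$ for all $\tau\le\tau_1$ and all $\tau\ge\tau_2$. Consequently no crossings with either reference subspace occur outside $[\tau_1,\tau_2]$, so $\mu(\vd,V^u_\lambda;\rr)$ and $\mu(\vn,V^u_\lambda;\rr)$ are both finite and equal to the Maslov indices on $[\tau_1,\tau_2]$.

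Now connect $V^u_\lambda(\tau_1)$ to $V^+(J\bc_\lambda(-\infty))$ and $V^u_\lambda(\tau_2)$ to $V^+(J\bc_\lambda(+\infty))$ by short homotopies staying transversal to $\vd$ and $\vn$ (possible by choosing $\tau_1$, $\tau_2$ far enough). Applying the definition of the H\"ormander index to the resulting path with the two reference Lagrangians $\vd$ and $\vn$ gives
\[
\mu(\vd,V^u_\lambda(\tau);\rr)-\mu(\vn,V^u_\lambda(\tau);\rr)
=s\bigl(\vd,\vn;\,V^+(J\bc_\lambda(-\infty)),\,V^+(J\bc_\lambda(+\infty))\bigr).
\]
By Lemma \ref{lem:neumindex} the second term on the left vanishes, while \eqref{homanderindexhatb} evaluates the right-hand side to $1$ for $\lambda\in(-b/8,0)$ and to $0$ for $\lambda\in(0,+\infty)$. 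Combining with the case $\lambda=0$ treated above yields the stated formula.

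The main (minor) obstacle is the non-compactness of $\rr$: one must justify that the H\"ormander identity, which is stated for paths between prescribed Lagrangian endpoints, can be applied to the limits at $\pm\infty$ of $V^u_\lambda(\tau)$. This is handled by the transversality of those limits to both $\vd$ and $\vn$, which implies the Maslov indices on $\rr$ agree with those on a sufficiently large finite interval and that the two homotopies used to close up the path contribute nothing.
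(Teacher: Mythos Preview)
Your proof is correct and follows essentially the same approach as the paper: treat $\lambda=0$ via Lemma~\ref{lem:mu=0}, and for $\lambda\ne 0$ use nondegeneracy to identify the limits of $V^u_\lambda(\tau)$ at $\pm\infty$, then obtain the Dirichlet index from the Neumann one (Lemma~\ref{lem:neumindex}) via the H\"ormander index computed in \eqref{homanderindexhatb}. The paper's version is terser and omits the justification for applying the H\"ormander identity over $\rr$, which you spell out carefully.
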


\begin{proof} For $\lambda=0$, the desired result follows from Lemma \ref{lem:mu=0}.
 For $\lambda\neq0$,  the system is non-degenerate, so
$$  \lim_{\tau\to-\infty}V^u_{\lmd}(\tau)=V^+(J\bc_\lambda(-\infty)),\quad \lim_{\tau\to+\infty}V^u_{\lmd}(\tau)=V^+(J\bc_\lambda(\infty)). $$
Then
$$ \mu(\vd, V^u_{\lmd}; \mathbb{R})-\mu(\vn, V^u_{\lmd}; \mathbb{R})=s(\vd, \vn; V^+(J\bc_\lambda(-\infty)), V^+(J\bc_\lambda(\infty)).$$
Now the desired result follows from \eqref{homanderindexhatb} and Lemma \ref{lem:neumindex}.
\end{proof}

With the above result, we have the following proposition which implies property (b) in Theorem \ref{thm 2 homo}, when $H_0 <0$.

\begin{prop} \label{prop: Mors index NegEng}
 If the energy $H_0$ of $q(t)$ is negative and the normalized central configuration $s_0$ associated with $q(t)$ satisfies the strict non-spiral condition, then
 $$  m^-(q; T^-, T^+) =m^-(M^{-1}D^2U|_{\mathcal{E}}(s_0)). $$
\end{prop}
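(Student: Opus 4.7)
The plan is to reduce the Morse index $m^-(q; T^-, T^+)$ to the Maslov index $\mu(\hat{B}; \mathbb{R})$ via Theorem \ref{thm: hyper}, and then evaluate the latter by exploiting the symplectic splittings $\hat{B} = \hat{B}_1 \diamond \hat{B}_2$ together with the further diagonalization of $\hat{B}_2$ into the scalar blocks $\mathcal{B}_{\lambda_i}$ already analyzed in this section.

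First I would verify the hypotheses of Theorem \ref{thm: hyper}. Since $H_0 < 0$, Lemma \ref{lem:masy1} shows that both $T^{\pm}$ are finite total collision times with $\tau \to \pm\infty$ in the McGehee coordinates, and by \eqref{eq: v tau} we have $v(\pm\infty) = \mp\sqrt{2b}$, so $\hat{B}(\pm\infty)$ takes the form \eqref{eq: hat B tau1}. The strict non-spiral assumption $\lambda_i > -b/8$ for every $i$, combined with Proposition \ref{prop: hyp type I}, ensures that $J\hat{B}(\pm\infty)$ are both hyperbolic. When $M^{-1}D^2U|_{\mathcal{E}}(s_0)$ is in addition nonsingular, Lemmas \ref{lem:mu=0}--\ref{lem:muneq0} give $\nu(\hat{B}_2) = 0$, and combined with the argument in Proposition \ref{prop:B1} (which shows $V^u_1(\tau) \pitchfork \vd$ for all $\tau$, so in particular $\nu(\hat{B}_1) = 0$), one obtains $V^s(0) \pitchfork V^u(0)$. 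Theorem \ref{thm: hyper}(a) then yields $m^-(q; T^-, T^+) = \mu(\hat{B}; \mathbb{R})$ directly.

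Next I would compute $\mu(\hat{B}; \mathbb{R})$. By symplectic additivity of the Maslov index applied to the splitting $\hat{B} = \hat{B}_1 \diamond \hat{B}_2$ and then, via the symplectic change of basis that diagonalizes $A^T \hat{U}_{xx}(x_0) A$, to the block decomposition $\hat{B}_2 \sim \bigoplus_i \mathcal{B}_{\lambda_i}$,
\begin{equation*}
\mu(\hat{B}; \mathbb{R}) \;=\; \mu(\hat{B}_1; \mathbb{R}) + \sum_{i=1}^{n^*-1} \mu(\mathcal{B}_{\lambda_i}; \mathbb{R}).
\end{equation*}
Proposition \ref{prop:B1} annihilates the first summand. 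Under strict non-spiral each $\lambda_i$ lies in $(-b/8, 0) \cup [0, +\infty)$, so Proposition \ref{prop: indexhatb} evaluates the $i$-th summand as $\mathbf{1}_{\{\lambda_i < 0\}}$. Summing gives $\#\{i : \lambda_i < 0\} = m^-(M^{-1}D^2U|_{\mathcal{E}}(s_0))$, as required.

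The main obstacle is the degenerate case in which $M^{-1}D^2U|_{\mathcal{E}}(s_0)$ has a nontrivial kernel, for then $V^s(0) \cap V^u(0) \neq \{0\}$ and Theorem \ref{thm: hyper}(a) does not apply in its global form. To handle it I would use the finite-time identity \eqref{eq: m-=mutau0}: pick $\tau_0$ large enough that $\vd \pitchfork V^u(\tau_0)$, apply \eqref{eq: m-=mutau0} on $(T^-, T_0)$ with $T_0 = t(\tau_0)$, and pass $\tau_0 \to +\infty$ using monotonicity of $m^-$. The transversality $\vd \pitchfork V^u(\tau_0)$ for large $\tau_0$ is verified block by block: for $V^u_1$ it is immediate from Proposition \ref{prop:B1}; for $V^u_{\lambda_i}$ with $\lambda_i > 0$ an entirely analogous argument via Lemma \ref{lem:nondegen} rules out any crossing; for $\lambda_i \in (-b/8, 0)$ the single crossing with $\vd$ detected in Proposition \ref{prop: indexhatb} occurs at a finite $\tau$, so transversality is restored afterwards; and for $\lambda_i = 0$ one has $V^u_{\lambda_i}(\tau) \equiv \langle(0,1)^T\rangle = \vn \pitchfork \vd$ by Lemma \ref{lem:mu=0}. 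Once transversality holds, $\mu(\hat{B}; \tau_0)$ stabilizes and agrees with the sum computed above, giving the claimed equality in full generality.
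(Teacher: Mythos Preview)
Your argument is correct and follows the same route as the paper: split $\mu(\hat B;\mathbb R)=\mu(\hat B_1;\mathbb R)+\sum_i\mu(\mathcal B_{\lambda_i};\mathbb R)$, evaluate each piece via Propositions~\ref{prop:B1} and~\ref{prop: indexhatb}, and then convert to the Morse index through \eqref{eq: m-=mutau0} after checking $\vd\pitchfork V^u(\tau_0)$ for large $\tau_0$. The paper in fact skips your separate nondegenerate case and goes straight to the finite-$\tau_0$ argument, which handles both cases at once.

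Two small imprecisions to clean up. First, $V^u_1(\tau)\pitchfork\vd$ does not by itself yield $\nu(\hat B_1)=0$; what you need is Lemma~\ref{lem:nondegen}(c) applied to the transformed equation in \eqref{eq: R1}. Second, for $\lambda_i>0$ with $H_0<0$ the coefficient $\tfrac{3}{8}rH_0+\tfrac{b}{8}+\lambda_i$ in \eqref{eq: R2lambda} can become negative near $\tau=0$, so Lemma~\ref{lem:nondegen} does not directly rule out crossings with $\vd$; the clean justification is that Proposition~\ref{prop: indexhatb} gives $\mu(\vd,V^u_{\lambda_i};\mathbb R)=0$, and since all crossings are positive this forces $V^u_{\lambda_i}(\tau)\pitchfork\vd$ for every $\tau$.
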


\begin{proof}
From  \eqref{Vuinfty} and \eqref{eq: MasSum B2},
$$ \mu(\bh; \rr) = \mu(\bh_1;\rr) + \mu(\bh_2; \rr) = \mu(\bh_1;\rr) + \sum_{i=1}^{n^*} \mu(\bc_{\lmd_i}; \rr). $$
Then Proposition \ref{prop:B1} and \ref{prop: indexhatb} imply
 $$  \mu(\hat{B};\rr)=m^-(M^{-1}D^2U|_{\mathcal{E}}(s_0)). $$
This means $\mu(\hat{B};(-\infty, \tau_0])=m^-(M^{-1}D^2U|_{\mathcal{E}}(s_0))$, for any $\tau_0$ large enough. Let $T_0=t(\tau_0)$ be the Newtonian time corresponding to $\tau_0$. Notice that when $\tau_0$ is large enough, $\vd \pitchfork V^u(\tau)$. By Property (a) in Theorem \ref{thm: hyper},
$$  m^-(q; T^-, T_0)=\mu(\hat{B}; (-\infty, \tau_0]) =m^-(M^{-1}D^2U|_{\mathcal{E}}(s_0)).$$
Since $T_0=t(\tau_0)$ goes to $T^+$, when $\tau_0$ goes to $+\infty$. We get
$$  m^-(q; T^-, T^+)=\mu(\hat{B}; \rr) =m^-(M^{-1}D^2U|_{\mathcal{E}}(s_0)).$$ \end{proof}

Now we are ready to prove the last result, which implies Theorem \ref{thm 2 homo}.
\begin{prop} \label{prop: Mors index nonspirial}
 If $s_0$ satisfies the non-spiral condition, then
\begin{equation*}
m^-(q; T^-, T^+)= \begin{cases}
m^-(M^{-1}D^2U|_{\mathcal{E}}(s_0)), & \text{ when } H_0 < 0;\\
0, & \text{ when } H_0 \ge 0.
\end{cases}
\end{equation*}
\end{prop}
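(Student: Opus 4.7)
The plan is to split the proof into three cases and only do new work in the last one. When $H_0 \ge 0$, Proposition~\ref{prop: Morse PosiZeroEng} gives $m^-(q;T^-,T^+)=0$ with no non-spiral hypothesis needed. When $H_0<0$ and $s_0$ satisfies the \emph{strict} non-spiral condition (every $\lambda_i(s_0)>-U(s_0)/8$), Proposition~\ref{prop: Mors index NegEng} applies verbatim and yields $m^-(q;T^-,T^+)=m^-(M^{-1}D^2U|_{\mathcal{E}}(s_0))$. So the only remaining case is $H_0<0$ together with at least one ``boundary'' eigenvalue $\lambda_i(s_0)=-U(s_0)/8$; since $U(s_0)>0$ this value is negative and thus already counted in $m^-(M^{-1}D^2U|_{\mathcal{E}}(s_0))$.

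For this borderline case I would retain the full decomposition developed at the beginning of Section~\ref{sec: homo index}: $\hat B = \hat B_1 \diamond \hat B_2$, with $\hat B_2$ symplectically equivalent to $\bigoplus_{i=1}^{n^*-1}\mathcal{B}_{\lambda_i}$, see \eqref{eq: B hat lmd}. Proposition~\ref{prop:B1} still gives $\mu(\hat B_1;\mathbb{R})=0$ and Proposition~\ref{prop: indexhatb} still supplies $\mu(\mathcal{B}_{\lambda_i};\mathbb{R})$ for every $\lambda_i\neq -U(s_0)/8$. Writing $b=U(s_0)$, the sole missing ingredient is the value $\mu(\mathcal{B}_{-b/8};\mathbb{R})$, which I expect to equal $1$: each boundary eigenvalue will then contribute the same as an eigenvalue in $(-b/8,0)$, and summing with Proposition~\ref{prop:B1} yields $\mu(\hat{B};\mathbb{R})=m^-(M^{-1}D^2U|_{\mathcal{E}}(s_0))$.

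To establish $\mu(\mathcal{B}_{-b/8};\mathbb{R})=1$ my plan is a perturbation argument in the parameter $\lambda$. After the conjugation \eqref{eq: R2lambda}, the Hamiltonian system for $\mathcal{B}_\lambda$ becomes the scalar equation $x''=\bigl(\tfrac{3}{8}rH_0+\tfrac{b}{8}+\lambda\bigr)x$, and the Maslov count with respect to $\vd$ equals the number of zeros of the solution starting from $\vd$. For $\lambda_\epsilon=-b/8+\epsilon$ with $\epsilon>0$ small, Proposition~\ref{prop: indexhatb} gives exactly one zero on $\mathbb{R}$. At $\lambda=-b/8$ the coefficient becomes $\tfrac{3H_0}{8}r(\tau)$, which is negative but exponentially small at infinity: from \eqref{eq: v tau} one has $v(\pm\infty)=\mp\sqrt{2b}$, so $r'=rv$ gives $r(\tau)\sim e^{-\sqrt{2b}|\tau|}$ as $\tau\to\pm\infty$, hence $\int_{\mathbb{R}}\sqrt{r(\tau)}\,d\tau<\infty$. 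A Pr\"ufer-angle estimate then shows that the total oscillation is bounded, so the zeros for the $\lambda=-b/8$ equation lie in some fixed bounded $\tau$-interval, uniformly for $\epsilon\in[0,\epsilon_0]$. Continuous dependence of ODE solutions on $\lambda$ on that bounded interval lets me pass $\epsilon\to 0^+$ and conclude the zero count is still $1$.

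The main obstacle is that $J\mathcal{B}_{-b/8}(\pm\infty)$ has zero eigenvalues, so the asymptotic hyperbolicity required by Proposition~\ref{prop: Vus limit} fails and the H\"ormander-index bookkeeping used in Proposition~\ref{prop: Mors index NegEng} no longer directly gives the correspondence $m^-(q;T^-,T^+)=\mu(\hat B;\mathbb{R})$. I would sidestep this by performing the counting on finite intervals $[t_1,t_2]\subset (T^-,T^+)$ throughout via \eqref{Mor-Mas} and Lemma~\ref{lem: Maslov index t tau}, decomposing the node count block by block, and only passing $t_1\to T^-$, $t_2\to T^+$ after the contribution of every boundary block has been bounded uniformly using the integrability estimates above. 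Combining with Propositions~\ref{prop:B1} and~\ref{prop: indexhatb} for the remaining blocks then recovers the target identity $m^-(q;T^-,T^+)=m^-(M^{-1}D^2U|_{\mathcal{E}}(s_0))$ for $H_0<0$.
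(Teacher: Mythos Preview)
Your approach is genuinely different from the paper's, and the sketch has real gaps.

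The paper does \emph{not} attempt a direct analysis of the borderline block $\mathcal{B}_{-b/8}$. Instead it perturbs the whole system at once: replacing $\hat U_{xx}(x_0)$ by $\hat U_{xx}(x_0)+\varepsilon\hat M$ shifts every eigenvalue to $\lambda_i+\varepsilon$, so the perturbed system $\hat B(\tau,\varepsilon)$ satisfies the \emph{strict} non-spiral condition and Proposition~\ref{prop: Mors index NegEng} (resp.\ \ref{prop: Morse PosiZeroEng}) applies to it. Since $\hat B\ge \hat B(\cdot,\varepsilon)$, the monotone property \eqref{adp1.5monotone} gives the lower bound $m^-(q;T^-,T^+)\ge m^-(M^{-1}D^2U|_{\mathcal E}(s_0))$ when $H_0<0$ (and $\ge 0$ when $H_0\ge 0$). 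For the matching upper bound the paper argues by contradiction on a \emph{finite} interval $[\tau_1,\tau_2]$: if the Morse index were strictly larger, then by lower semicontinuity of the Morse index under small perturbations the same excess would persist for $\hat B(\cdot,\varepsilon)$ with $\varepsilon$ small, contradicting the strict-case computation. This avoids ever defining or computing anything at $\lambda=-b/8$.

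Two concrete issues with your plan. First, your appeal to Proposition~\ref{prop: Morse PosiZeroEng} for $H_0\ge 0$ ``with no non-spiral hypothesis needed'' is not quite right: its proof invokes \eqref{eq: m-=mu} from Theorem~\ref{thm: hyper}, which in turn requires hyperbolicity of $J\hat B(\pm\infty)$, i.e.\ the strict non-spiral condition at the total-collision/parabolic endpoints; the paper therefore reproves the $H_0\ge 0$ case under mere non-spiral via the same perturbation trick. Second, your route for $H_0<0$ mixes two frameworks: for the non-borderline blocks you want to cite Propositions~\ref{prop:B1} and~\ref{prop: indexhatb}, which compute the unstable-subspace index $\mu(\mathcal B_\lambda;\mathbb R)$, but converting that to the finite-interval Dirichlet count you need requires a block-wise version of Theorem~\ref{thm: hyper}(a); meanwhile for $\lambda=-b/8$ the object $\mu(\mathcal B_{-b/8};\mathbb R)$ is undefined and your Pr\"ufer/continuity sketch for the finite-interval count still has to rule out zeros escaping to infinity as $\tau_1\to-\infty$ (the limiting solution is the dominant-at-$-\infty$ solution of $x''+\frac{3b}{8}\cosh^{-2}(\tfrac{\sqrt{2b}}{2}\tau)\,x=0$, and you would have to locate its zeros). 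All of this is plausible but substantially more work than the paper's two-line monotonicity/semicontinuity argument, which sidesteps the non-hyperbolic endpoint entirely.
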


\begin{proof}
We shall use a perturbation argument. For $\vep >0$ small enough, set
\bea
\hat{B}(\tau,\varepsilon)=\hat{B}_{1}(\tau)\diamond\hat{B}_{2}(\tau,\varepsilon), \; \text{ where } \hat{B}_{2}(\tau,\varepsilon)=\left(\begin{array}{cc}\hat{M}^{-1} & \frac{1}{4}vI\\
                             \frac{1}{4}vI&-\U_{xx}(x_0)-\varepsilon\hat{M}\end{array}\right).\nonumber
\eea
Then $\hat{B}(\tau)-\hat{B}(\tau,\varepsilon)$ is a non-negative matrix.
By the monotone property of Maslov index,
\bea
\mu(\vd,\hat{\ga}(\tau,\tau_{1})\vd;[\tau_{1},\tau_{2}])\geq
\mu(\vd,\hat{\ga}_{\vep}(\tau,\tau_{1})\vd;[\tau_{1},\tau_{2}]),\label{96}
\eea
where $\hat{\ga}_{\vep}(\tau,\tau_{1})$ satisfies
\begin{equation*}
\begin{cases}
\hat{\ga}'_{\vep}(\tau,\tau_{1}) & =J\hat{B}(\tau,\varepsilon)\hat{\ga}_{\vep}(\tau,\tau_{1}); \\
\hat{\ga}_{\vep}(\tau_{1},\tau_{1}) & =I.
\end{cases}
\end{equation*}
This system satisfies the strict non-spiral condition.

First let's assume $H_0<0$. By previous results,
\bea
\lim_{\tau_{1}\rightarrow-\infty}\lim_{\tau_{2}\rightarrow+\infty}
\mu(\vd,\hat{\ga}_{\vep}(\tau,\tau_{1})\vd;[\tau_{1},\tau_{2}])-n^{*}
=m^{-}(\U_{xx}(x_0)+\varepsilon\hat{M}).\label{97}
\eea
Meanwhile for $\varepsilon$ small enough,
$$ m^{-}(\U_{xx}(x_0)+\varepsilon\hat{M})= m^-(\hat{U}_{xx}(x_0))=m^-(M^{-1}D^2U|_{\mathcal{E}}(s_0)). $$
By the monotone property (\ref{96}) and the Morse index theorem (\ref{Mor-Mas}),
\bea
m^-(q; T^-, T^+)\geq m^-(M^{-1}D^2U|_{\mathcal{E}}(s_0)).\nonumber
\eea
Meanwhile (\ref{97}) implies that for any $\tau_{1} < \tau_{2}$,
\bea
\mu(\vd,\hat{\ga}_\varepsilon(\tau,\tau_{1})\vd;[\tau_{1},\tau_{2}])-n^{*}
\leq m^-(M^{-1}D^2U|_{\mathcal{E}}(s_0))\label{98}
\eea

By a contradiction argument, let's assume the desired identity does not hold. Then
\bea
m^-(q; T^-, T^+)\geq m^-(M^{-1}D^2U|_{\mathcal{E}}(s_0))+1. \nonumber
\eea
According to the Morse index theorem (\ref{Mor-Mas}) and Lemma \ref{lem: Maslov index t tau},
there exist $\tau_{1} < \tau_{2}$, such that
\bea
\mu(\vd,\hat{\ga}(\tau,\tau_{1})\vd;[\tau_{1},\tau_{2}])-n^{*}\geq m^-(M^{-1}D^2U|_{\mathcal{E}}(s_0))+1.\nonumber
\eea
From the index theorem (see \cite{HS09}), the index $\mu(\vd,\hat{\ga}(\tau,\tau_{1})\vd;[\tau_{1},\tau_{2}])$
is equivalent to the Morse index of the Sturm-Liouville operator of this system. This implies such an index will not decrease by a small perturbation. Hence for $\varepsilon$ small enough, we still have
\bea
\mu(\vd,\hat{\ga}_\varepsilon(\tau,\tau_{1})\vd;[\tau_{1},\tau_{2}])-n^{*}\geq m^-(M^{-1}D^2U|_{\mathcal{E}}(s_0))+1.
\nonumber
\eea
This contradicts (\ref{98}). Hence under the non-spiral condition, we still have
\bea
m^-(q; T^-, T^+)=m^-(M^{-1}D^2U|_{\mathcal{E}}(s_0)).\nonumber
\eea

The proof for $H_0\geq0$ is similar. For any $\tau_1<\tau_2$, we have
\bea
m^-(q_\epsilon; t(\tau_1), t(\tau_2))=\mu(\vd,\hat{\ga}_{\vep}(\tau,\tau_{1})\vd;[\tau_{1},\tau_{2}])-n^{*}
=0,\label{99.1}
\eea
where $m^-(q_\epsilon; t(\tau_1), t(\tau_2)) $ is the Morse index of the $\epsilon$ perturbation system. Since the Morse index is non-decrease under small perturbations, we have
$$
m^-(q; t(\tau_1), t(\tau_2))=0,$$
for any $\tau_1 <\tau_2$.  Let $\tau_1\to -\infty$, $\tau_2\to +\infty$ ($\tau_2\to \tau^+$ in the case of $H_0>0$), we get the desired result.

\end{proof}

\mbox{}

\textbf{Acknowledgments}. We thank the anonymous referee for his or her useful comments. We thank Andrea Venturelli for suggesting the term ``doubly asymptotic'', as well as Alain Chenciner, Rick Moeckel, Richard Montgomery for useful comments, that helped improving the terminology of this paper.

The first  author  thanks useful discussions with Vivina Barutello,  Alessandro Portaluri and Susanna Terracini. The last two authors wishes to thank School of Mathematics, Shandong University for its hospitality, where part of the work was done when they were visitors there.

\bibliographystyle{abbrv}

\bibliography{ref-singular}

\end{document}